\documentclass[review,3p,square]{elsarticle}
\usepackage{pdfsync}
\usepackage{amssymb}
\usepackage{amsmath}
\usepackage{amsthm}
\usepackage{stmaryrd}
\usepackage{natbib}
\usepackage{bbm}
\usepackage{amsmath}
\allowdisplaybreaks[4]
\usepackage{enumitem}
\usepackage{fancyhdr}
\usepackage{hyperref}
\hypersetup{%
  pdftitle={BSDEs},%
  pdfsubject={BSDEs},%
  pdfauthor={YaQi Zhang, ShengJun FAN},%
  pdfkeywords={BSDEs},%
  pdfstartview=FitH,%
  CJKbookmarks=true,%
  bookmarksnumbered=true,%
  bookmarksopen=true,%
  colorlinks=true, linkcolor=blue, urlcolor=blue, citecolor=blue, %
}

\usepackage{cleveref}
\crefname{thm}{Theorem}{Theorems}
\crefname{pro}{Proposition}{Propositions}
\crefname{lem}{Lemma}{Lemmas}
\crefname{rmk}{Remark}{Remarks}
\crefname{cor}{Corollary}{Corollaries}
\crefname{dfn}{Definition}{Definitions}
\crefname{ex}{Example}{Examples}
\crefname{section}{Section}{Sections}
\crefname{subsection}{Subsection}{Subsections}

\newcommand{\To}{\rightarrow}
\newcommand{\as}{{\rm d}\mathbb{P}\times{\rm d} t-a.e.}
\newcommand{\ps}{\mathbb{P}-a.s.}

\newcommand{\essinf}{\mathop{\operatorname{ess\,inf}}}
\newcommand{\F}{\mathcal{F}}
\newcommand{\E}{\mathbb{E}}

\newcommand{\R}{{\mathbb R}}

\newcommand{\RE}{\forall}

\newtheorem{thm}{Theorem}[section]

\newtheorem{pro}[thm]{Proposition}
\newtheorem{rmk}[thm]{Remark}
\newtheorem{cor}[thm]{Corollary}

\newtheorem{ex}[thm]{Example}

\DeclareMathOperator{\sgn}{sgn}

\journal{arXiv}

\begin{document}
\begin{frontmatter}

\title{{Weighted $L^p$ solutions of scalar BSDEs with general unbounded stochastic coefficients}\tnoteref{found}}
\tnotetext[found]{Partially supported by National Natural Science Foundation of China (No. 12171471).
\vspace{0.2cm}}

\author{Yaqi Zhang} \ead{TS22080028A31@cumt.edu.cn}
\author{Zongjia Zhu} \ead{1284874259@qq.com}
\author{Shengjun Fan\corref{cor}} \ead{shengjunfan@cumt.edu.cn}\vspace{-0.6cm}

\address{School of Mathematics, China University of Mining and Technology, Xuzhou 221116, China}

\cortext[cor]{Corresponding \vspace{0.2cm}author}
\vspace{0.2cm}

\begin{abstract}
This paper is devoted to solving one-dimensional backward stochastic differential equations (BSDEs in short) with a general random terminal time $\tau$ taking values in the extended nonnegative real numbers. The generator $g$ of BSDEs satisfies some stochastic growth/continuity conditions in the state variables $(y,z)$, featuring unbounded stochastic coefficients $\mu_\cdot\in\R$ and $\nu_\cdot\in\R_+$ satisfying $\int_0^\tau (|\mu_t|+\nu^2_t) {\rm d}t<+\infty$. For any given real $p>1$, let $\rho_\cdot\geq \mu_\cdot+\frac{\theta}{2(p-1)}\nu_\cdot^2$ (instead of $\rho_\cdot\geq\mu_\cdot+\frac{\theta}{2[1\wedge(p-1)]}\nu_\cdot^2$ used in Zhang, Li, Hu and Fan [2026, arXiv:2603.13873v1]) be a real-valued process for some constant $\theta>1$ such that $\int_0^\tau |\rho_t|{\rm d}t<+\infty$. We work within a weighted $L^p$ space with the weighting factor $e^{\int_0^t \rho_r{\rm d}r}$. Within this framework, we establish several innovative results on the weighted $L^p$ solutions of BSDEs: an existence result, an existence and uniqueness result, an existence and uniqueness result of the minimal (maximal) solution, and two comparison theorems. These findings unify and improve some existing results. Some novel ideas are employed to address the challenges posed by general unbounded stochastic coefficients and general weighted spaces.
\vspace{0.2cm}
\end{abstract}

\begin{keyword}
Backward stochastic differential equation \sep Weighted $L^p$ solution \sep Existence and uniqueness\sep\\  \hspace*{1.85cm} Comparison theorem \sep Unbounded stochastic coefficients \sep Minimal solution\vspace{0.2cm}

\MSC[2021] 60H10, 60H30\vspace{0.2cm}
\end{keyword}

\end{frontmatter}
\vspace{-0.4cm}

\section{Introduction}
\label{sec:1-Introduction}
\setcounter{equation}{0}

Let us fix a positive integer $d$ and let $(\Omega,\F, \mathbb{P})$ be a completed probability space with the natural augmented filtration $(\F_t)_{t\geq0}$ generated by a standard $d$-dimensional Brownian motion $(B_t)_{t\geq0}$. We always assume that $\tau$ is a terminal time being a general $(\F_t)$-stopping time whose values are either nonnegative real numbers or positive infinity, and set $\F:=\F_\tau$. In the case of $\tau(\omega)=+\infty$, the interval $[0,\tau(\omega)]$ should be always understood as $[0,+\infty)$. In this paper, we are concerned with the following one-dimensional backward stochastic differential equation (BSDE for
short in the remaining):
\begin{align}\label{BSDE1.1}
  y_t=\xi+\int_t^\tau g(s,y_s,z_s){\rm d}s-\int_t^\tau z_s\cdot{\rm d}B_s, \ \ t\in[0,\tau],
\end{align}
where the terminal value $\xi$ is an $\F_\tau$-measurable random variable and the random function
$$g(\omega,t,y,z): \Omega\times[0,\tau]\times \R\times \R^{d}\mapsto \R$$
is $(\F_t)$-progressively measurable for each $(y,z)$, called the generator of BSDE \eqref{BSDE1.1}. A pair of $(\F_t)$-progressively measurable processes $(y_t,z_t)_{t\in [0,\tau]}$ taking values in $\R \times \R^{d}$ is called an adapted solution to BSDE \eqref{BSDE1.1} if $\ps$, $y_\cdot$ is continuous, $\int_0^{\tau} \left(|g(t,y_t,z_t)|+|z_t|^2\right){\rm d}t<+\infty$, and \eqref{BSDE1.1} is satisfied. Notably, BSDE \eqref{BSDE1.1} indicates that
$\lim\limits_{t\rightarrow \tau} y_t=\xi$ on the set of $\{\tau=+\infty\}$. The BSDE with parameters $(\xi,\tau,g)$ will be denoted by BSDE$(\xi,\tau,g)$.

Since the pioneering work of \citet{PardouxPeng1990SCL} on nonlinear BSDEs with a fixed terminal time and square-integrable parameters under a uniform Lipschitz condition on the generator $g$, the theory of BSDEs has been extensively developed and applied in various fields such as partial differential equations (\cite{DarlingandPardoux1997,Pardoux1999,BriandandConfortola2008,Bahlali2015}), mathematical finance
(\cite{KarouiPengQuenez1997,HuYing2005,DelbaenPengRosazza Gianin2010,Tian2023SIAM}), and stochastic control (\cite{Yong1999,Kohlmann2003,Tang2003,PardouxandRascanu2014}).
Many studies have sought to extend the existence and uniqueness result of \citet{PardouxPeng1990SCL} by relaxing the uniform Lipschitz condition on the generator $g$, considering BSDEs over random (infinite) terminal time, and investigating adapted solutions of BSDEs under non-square integrable parameters. Relevant developments can be found in \cite{Kobylanski2000,Briand2003,Bahlali2004,Royer2004,Fan2016SPA} and references therein.

For the one-dimensional case, a lot of works have been made by virtue of the comparison theorem of solutions to one-dimensional BSDEs initially proposed by \citet{Peng1992}, and then established in \cite{Lepeltier1998,Jia2010,DelbaenHuBao2011,FanHu2021SPA,Gu2025} and so on. We are particularly interested in the study of one-dimensional BSDEs with a random terminal time $\tau$ and with a generator satisfying some stochastic growth/continuity conditions. For convenience of narration, let $p,\theta>1$ be given constants and let $\rho_\cdot, \mu_\cdot\in \R$ and $\nu_\cdot\in\R_+$ be $(\F_t)$-progressively measurable processes defined on $\Omega\times[0,\tau]$ and satisfying $\mathbb{P}-a.s.$, $\int_0^\tau \left(|\rho_t|+|\mu_t|+\nu_t^2\right){\rm d}t<+\infty$. Very recently, under a stochastic monotonicity condition and a stochastic Lipschitz condition of $g$ in $(y,z)$, i.e., $\mathbb{P}-a.s.$, for each $(t,y_1,y_2,z_1,z_2)\in [0,\tau]\times \R\times\R\times\R^{d}\times\R^{d}$,
\begin{align}\label{eq:1.1}
\sgn(y_1-y_2)(g(\omega,t,y_1,z)-g(\omega,t,y_2,z))\leq \mu_t(\omega)|y_1-y_2|
\end{align}
and\vspace{-0.1cm}
\begin{align}\label{eq:1.2}
\left|g(\omega,t,y,z_1)-g(\omega,t,y,z_2)\right|\leq \nu_t(\omega)|z_1-z_2|,
\end{align}
combined with the following general growth condition of $g$ in $y$:
there exists an $(\F_t)$-progressively measurable real-valued process $(\alpha_t)_{t\in [0,\tau]}$ satisfying $\essinf\limits_{t\in[0,\tau]}\alpha_t>0$ such that
\begin{align}\label{eq:1.3}
\forall x\in \R_+,\ \ \ \E\left[\int_0^\tau \alpha_t\psi_t(x\alpha_t){\rm d}t\right]<+\infty
\end{align}
with \vspace{-0.1cm}
\begin{align}\label{eq:1.4}
\psi_t(x):=\sup_{|y|\leq x} \left|g(t,y,0)-g(t,0,0)\right|, \ \ (t,x)\in [0,\tau]\times\R_+,
\end{align}
\citet[Theorem 3.3]{Zhang2026} proved that if
\begin{align}\label{eq:1.6}
\rho_t\geq \mu_t +\frac{\theta}{2[1\wedge(p-1)]}\nu_t^2,\ \ t\in [0,\tau]
\end{align}
and
\begin{align}\label{eq:1.5}
\E\left[\left(\int_0^\tau e^{\int_0^s \rho_r {\rm d}r} |g(s,0,0)|{\rm d}s\right)^{p}\right]<+\infty,
\end{align}
then for each $\xi \in L_\tau^p(\rho_\cdot;\R)$, BSDE$(\xi,\tau,g)$ admits a unique adapted solution $(y_t,z_t)_{t\in [0,\tau]}$ in the weighted space of $H_\tau^p(\rho_\cdot;\R\times\R^{d})$ with the weighting factor $e^{\int_0^t \rho_r {\rm d}r}$ (see the definitions of $L_\tau^p(\rho_\cdot;\R)$, $S_\tau^p(\rho_\cdot;\R)$, $M_\tau^p(\rho_\cdot;\R^d)$ and $H_\tau^p(\rho_\cdot;\R\times\R^{d})$ in Section 2.1). We note that \citet[Theorem 3.3]{Zhang2026} actually dealt with multidimensional BSDEs. As for the following one-dimensional linear BSDE:
\begin{align}\label{linearBSDE}
  y_t=\xi+\int_t^\tau (\mu_sy_s+\nu_sz_s){\rm d}s-\int_t^\tau z_s\cdot {\rm d}B_s, \ \ t\in[0,\tau],
\end{align}
\citet[subsection 2.2]{Zhang2026} illustrates that if
\begin{align}\label{eq:1.7}
\rho_t\geq \mu_t+\frac{{\theta}}{2(p-1)}\nu_t^2,\ \ t\in [0,\tau],
\end{align}
then for each $\xi \in L_\tau^p(\rho_\cdot;\R)$, BSDE \eqref{linearBSDE} still admits an adapted solution $(y_t,z_t)_{t \in[0,\tau]}$ such that $y_\cdot\in S_\tau^p(\rho_\cdot;\R)$, although it is uncertain that $z_\cdot\in M_\tau^p(\rho_\cdot;\R)$. We emphasize that for the case of $p>2$, the condition in \eqref{eq:1.7} required for the process $\rho_\cdot$ is weaker than that in \eqref{eq:1.6}. Then, a question naturally arises: for a scalar nonlinear BSDE with general unbounded stochastic coefficients, is the previous assertion also true? This paper focuses on solving this question and establishes several affirmative results.

Let us proceed to review several known results on existence of an adapted solution to one-dimensional BSDEs, which are related closely to the research of this paper. \citet{Lepeltier1997} proved existence of the minimal (maximal) $L^2$ solution for a one-dimensional BSDE with a positive constant terminal time $T$ when the generator $g$ is continuous and has a linear growth in $(y,z)$, which was extended by \citet{Briand2007} to the case that $g$ satisfies \eqref{eq:1.1} with a constant $\mu\geq0$ instead of the process $\mu_\cdot\in\R$ and it has a general growth in $y$. Furthermore, \citet{Fan2016SPL} first proposed a one-sided linear growth condition in $(y,z)$, i.e., there exist a constant $\mu\geq0$ and a nonnegative $(\F_t)$-progressively measurable process $(f_t)_{t\in [0,T]}$ satisfying
$\E\left[\left(\int_0^T f_t{\rm d}t\right)^{2}\right]<+\infty$
such that $\mathbb{P}-a.s.$, for each $(t,y,z) \in [0,\tau]\times\R\times\R^d$,
\begin{align}\label{eq:1.10}
\sgn(y)g(\omega,t,y,z) \leq f_t(\omega) +\mu(|y|+|z|),
\end{align}
and established existence of an $L^2$ solution for a one-dimensional BSDE under some additional conditions.

Inspired by the above-mentioned works, in this paper we establish several novel results on the weighted $L^p$ solutions of scalar BSDEs with general unbounded stochastic coefficients: an existence result, an existence and uniqueness result, an existence and uniqueness result of the minimal (maximal) solution, and two comparison theorems. More specifically, in \cref{thm:3.2} of \cref{sec:3-An existence result}, we suppose that \eqref{eq:1.7} holds and that the generator $g$ is continuous and satisfies the following stochastic one-sided linear growth condition in $(y,z)$: there exists a nonnegative $(\F_t)$-progressively measurable process $(f_t)_{t\in [0,\tau]}$ satisfying
$\E\left[\left(\int_0^\tau e^{\int_0^s \rho_r {\rm d}r} f_t{\rm d}t\right)^p\right]<+\infty$ such that $\mathbb{P}-a.s.$, for each $(t,y,z) \in [0,\tau]\times\R\times\R^d$,
\begin{align}\label{eq:1.11}
\sgn(y)g(\omega,t,y,z) \leq f_t(\omega) +\mu_t(\omega)|y|+\nu_t(\omega)|z|,
\end{align}
along with a general growth condition in $y$ and a quadratic growth condition in $z$ (see \ref{A:H1} in \cref{sec:3-An existence result} for details), and we prove that if either $\mu_\cdot\in \R_+$ is satisfied or the quadratic growth condition of $g$ in $z$ is replaced with a stochastic linear growth condition (see \eqref{eq:2.3*} for details), then for each $\xi \in L_\tau^p(\rho_\cdot;\R)$,
BSDE$(\xi,\tau,g)$ admits an adapted solution $(y_t,z_t)_{t\in[0,\tau]}$ such that $y_\cdot\in S_\tau^p(\rho_\cdot;\R)$, and $z_\cdot\in M_\tau^p(\rho_\cdot;\R^d)$ when \eqref{eq:1.6} is in force. It is obvious that condition \eqref{eq:1.11} generalizes condition \eqref{eq:1.10}. We emphasize that \cref{thm:3.2} unifies and strengthens the corresponding results obtained in \citet{Lepeltier1997}, \citet{Briand2007} and \citet{Fan2016SPL} as well as some existing relevant results. Subsequently, in \cref{thm:4.2} of \cref{sec:4-An existence and uniqueness result}, we suppose that the generator $g$ is continuous in $y$ and satisfies \eqref{eq:1.1}-\eqref{eq:1.2}, \eqref{eq:1.5} and the following general growth condition in $y$: for the function $\psi$ defined in \eqref{eq:1.4}, we have
\begin{align}\label{eq:1.12}
\RE x\in \R_+, \ \ \mathbb{P}-a.s.,\ \ \ \int_0^\tau \psi_t(x){\rm d}t<+\infty
\end{align}
(see \ref{A:H2} in \cref{sec:4-An existence and uniqueness result} for details), and we prove that if \eqref{eq:1.7} holds, then for each $\xi \in L_\tau^p(\rho_\cdot;\R)$, BSDE$(\xi,\tau,g)$ admits a unique adapted solution $(y_t,z_t)_{t\in[0,\tau]}$ such that $y_\cdot\in S_\tau^p(\rho_\cdot;\R)$, and $z_\cdot\in M_\tau^p(\rho_\cdot;\R^d)$ when \eqref{eq:1.6} is in force. Since \eqref{eq:1.7} is weaker than \eqref{eq:1.6}, and \eqref{eq:1.12} is weaker than \eqref{eq:1.3}, it can be concluded that \cref{thm:4.2} strengthens Theorem 3.3 in \citet{Zhang2026} in the one-dimensional case. Moreover, by using the weighting factor $e^{\int_0^t \rho_r {\rm d}r}$ with process $\rho_\cdot$ satisfying \eqref{eq:1.7} rather than $\rho_\cdot:=\beta \mu_\cdot^+ +\frac{\theta}{2[1\wedge(p-1)]} \nu_\cdot^2$ for some $\beta\geq1$ used in \citet{LiFan2024SD}, we establish a general comparison theorem on the weighted $L^p$ solution of scalar BSDEs under \eqref{eq:1.1} and \eqref{eq:1.2} (see \cref{thm:4.1} for details) and it naturally improves the corresponding result obtained in \citet{LiFan2024SD}. Finally, in \cref{thm:5.1} of \cref{sec:5-An existence and uniqueness result of the minimal (maximal) solution}, we suppose that \eqref{eq:1.7}/\eqref{eq:1.6} holds, either the generator $g$ is continuous in $y$ (uniformly with respect to $z$) and $z$, and it satisfies the stochastic monotonicity condition with general growth in $y$ and the stochastic linear growth condition in $z$ (see \ref{A:H3} in \cref{sec:5-An existence and uniqueness result of the minimal (maximal) solution} for details) or the generator $g$ is continuous and has a stochastic linear growth condition in $(y,z)$ (see \ref{A:H1'} in \cref{sec:3-An existence result} for details), and we prove an existence and uniqueness result on the minimal (maximal) weighted $L^p$ solution of BSDE$(\xi,\tau, g)$, which strengthens Theorem 4.1 of \citet{Briand2007} and some relevant results. The corresponding comparison theorem of the minimal (maximal) weighted $L^p$ solutions is also established in \cref{thm:5.2} of \cref{sec:5-An existence and uniqueness result of the minimal (maximal) solution}.

A key a priori estimate on the weighted $L^p$ solution of BSDEs with general unbounded stochastic coefficients is established in \cref{pro:2.2} of \cref{sec:2-Preliminaries} in order to tackle the general case that the process $\rho_\cdot$ only satisfies condition \eqref{eq:1.7}. To handle the case that the stochastic process $\mu_\cdot$ may take negative values, in the proof of \cref{thm:3.2} we employ an exponential shift transformation, a truncation technique and a localization method. Due to general unbounded stochastic coefficients, some new difficulties arise naturally and are successfully overcome in the proof of \cref{thm:4.1}. \cref{thm:4.2} follows immediately by combining \cref{thm:3.2} and \cref{thm:4.1}. Based on \cref{thm:4.1} and \cref{thm:4.2}, we can prove \cref{thm:5.1} by virtue of a convolution approaching technique, the localization method and the a priori estimate-\cref{pro:2.2}. \cref{thm:5.2} is a direct consequence of \cref{thm:5.1} and \cref{thm:4.1}. It should be emphasized that, under condition \eqref{eq:1.7} rather than \eqref{eq:1.6}, the proof of the existence and uniqueness results for the weighted $L^p$ solution of the aforementioned one-dimensional BSDE crucially relies on the a priori estimate provided in \cref{pro:2.2} and the localization method presented initially in \citet{BriandHu2006PTRF}. These proof methods and techniques are no longer applicable to the case of multidimensional BSDEs. Consequently, whether the corresponding multidimensional versions of the results obtained in this paper still hold remains unknown. This problem is both interesting and somewhat challenging.

The rest of this paper is organized as follows. Section 2 introduces some notations and establishes two a priori estimates (\cref{pro:2.2,pro:2.3}). Section 3 presents an existence result (\cref{thm:3.2}) and its direct corollary (\cref{cor:3.1}), along with their proofs. Section 4 provides an existence and uniqueness result and a comparison theorem (\cref{thm:4.2} and \cref{thm:4.1}), along with their proofs. Section 5 establishes an existence and uniqueness result for the minimal (maximal) solutions (\cref{thm:5.1}) and the corresponding comparison theorem (\cref{thm:5.2}). The Appendix contains the proofs of two technical propositions (\cref{pro:5.2,pro:5.1}). Some remarks and examples are also provided after each theorem to compare our results with existing works and to illustrate the novelty of our findings.

\section{Preliminaries}
\label{sec:2-Preliminaries}
\setcounter{equation}{0}

\subsection{Notations}
For $a,b\in \R$, define $a\vee b=\max\{a,b\}$, $a\wedge b=\min\{a,b\}$, $a^+=a\vee 0$ and $a^-=(-a)^+$. For each positive integer $n$, denote the norm of Euclidean space $\R^n$ by $|\cdot|$. Let $\R_+:=[0,+\infty)$ and $\sgn(x)={\bf 1}_{x>0}-{\bf 1}_{x\leq0}$, where ${\bf 1}_{A}$ is the indicator function of $A$. Every equality and inequality between random elements and every claim on random elements should be understood as holding $\mathbb{P}$-almost surely. Denote by $\mathbf{S}$ the set of functions $\phi_t(\omega,x): \Omega \times [0,\tau] \times \R_+ \rightarrow \R_+$ satisfying the following two conditions:
\begin{itemize}
\item $\as$, the function $x \mapsto \phi_t(\omega,x)$ is continuous and increasing;
\item $\forall x \in \R_+$, $\phi_t(\omega,x)$ is an $(\F_t)$-progressively measurable process satisfying
$\int_0^\tau \phi_t(\omega,x){\rm d}t < +\infty.$
\end{itemize}

Let $p>1$ be a given constant and $\rho_t(\omega):\Omega \times [0,\tau]\mapsto \R$ be an $(\F_t)$-progressively measurable process satisfying $\int_{0}^{\tau}|\rho_t|{\rm d}t<+\infty$. Let $L_\tau^p(\rho_\cdot;\R)$ denote the set of $\F_\tau$-measurable random variables $\xi$ such that $$\|\xi\|_{p;\rho_\cdot}:=\left(\E\left[e^{p \int_0^\tau \rho_s{\rm d}s}|\xi|^p\right]\right)^{\frac{1}{p}}<+\infty,$$
$S_\tau^p(\rho_\cdot;\R)$ the set of $(\F_t)$-adapted and continuous real-valued processes $(Y_t)_{t\in[0,\tau]}$ such that
$$\|Y_\cdot\|_{p; \rho_\cdot,c}:=\left(\E\left[\sup_{t\in[0,\tau]}\left(e^{p \int_0^t \rho_r{\rm d}r}|Y_t|^p\right)\right]\right)^{\frac{1}{p}}<+\infty,$$
$S^\infty_\tau(\rho_\cdot;\R)$ the subset of $S_\tau^p(\rho_\cdot;\R)$ such that the process $e^{\int_0^\cdot \rho_r{\rm d}r}|Y_\cdot|$ is bounded, and $M_\tau^p(\rho_\cdot;\R^{d})$ the set of $(\F_t)$-progressively measurable $\R^{d}$-valued processes $(Z_t)_{t\in[0,\tau]}$ such that
$$\|Z_\cdot\|_{p; \rho_\cdot}:=\left(\E\left[{\left(\int_0^\tau e^{2 \int_0^t \rho_r{\rm d}r}|Z_t|^2{\rm d}t\right)}^\frac{p}{2}\right]\right)^{\frac{1}{p}}<+\infty.$$ Then, define
$$H_\tau^p(\rho_\cdot;\R\times\R^{d}):=S_\tau^p(\rho_\cdot;\R)\times M_\tau^p(\rho_\cdot;\R^{d}).$$
It is clear that $H_\tau^p(\rho_\cdot;\R\times\R^{d})$ is a Banach space with the norm
$$\|(Y_\cdot,Z_\cdot)\|_{p; \rho_\cdot}:=\|Y_\cdot\|_{p; \rho_\cdot,c}
+\|Z_\cdot\|_{p; \rho_\cdot}.$$
Finally, if a pair of processes $(y_t,z_t)_{t\in [0,\tau]}$ is an adapted solution of BSDE \eqref{BSDE1.1} such that $y_\cdot\in S_\tau^p(\rho_\cdot;\R)$, then it will be called a weighted $L^p$ solution of BSDE \eqref{BSDE1.1} with the weighting factor $e^{\int_0^t \rho_s{\rm d}s}$.

\subsection{A priori estimates}

In this subsection, we establish the following a priori estimate concerning the weighted $L^p$ solution of BSDE \eqref{BSDE1.1} that will play an important role in the proof of main results in this paper.

\begin{pro}\label{pro:2.2}
Let $\bar\rho_t(\omega)$, $\bar\mu_t(\omega)$: $\Omega \times [0,\tau]\mapsto \R$ and $\bar f_t(\omega)$, $\bar\nu_t(\omega)$: $\Omega \times [0,\tau]\mapsto \R_+$ be four $(\F_t)$-progressively measurable processes satisfying $\int_{0}^{\tau}\left(|\bar\rho_t|+|\bar\mu_t|+\bar\nu_t^2\right){\rm d}t<+\infty$,
\begin{align*}
\bar\rho_t\geq \bar\mu_t+\frac{\theta}{2(p-1)}\bar\nu_t^2, \ \ t\in[0,\tau]
\end{align*}
for some constant $\theta>1$, and
$$
\E\left[\left(\int_0^\tau e^{\int_{0}^{t}\bar\rho_r{\rm d}r}\bar f_t{\rm d}t\right)^{p}\right]<+\infty.
$$
Assume that the generator $g$ satisfies the following assumption:
\begin{enumerate}
\renewcommand{\theenumi}{(A)}
\renewcommand{\labelenumi}{\theenumi}
\item\label{A:A} $\forall (t,y,z)\in[0,\tau]\times\R\times\R^d$,~ $\sgn(y)g(\omega,t,y,z) \leq \bar f_t(\omega) + \bar\mu_t(\omega)|y| + \bar\nu_t(\omega)|z|$,
\end{enumerate}
and $(y_t,z_t)_{t\in [0,\tau]}$ is an adapted solution of BSDE \eqref{BSDE1.1} such that $y_\cdot\in S_\tau^{p}(\bar\rho_\cdot;\R)$. Then there exists a constant $C_{p,\theta}>0$ depending only on $p$ and $\theta$ such that for each $0\leq r\leq t< +\infty$,
\begin{align*}
\E\left[\sup_{s\in[t\wedge\tau,\tau]}\left(e^{p
\int_{0}^{s}\bar{\rho}_{r}{\rm d}r}|y_s|^{p}\right)\bigg|\F_{r\wedge\tau}\right]\leq
C_{p,\theta}\E\left[e^{p\int_{0}^{\tau}\bar{\rho}_{r}{\rm d}r}|\xi|^{p}+\left(
\int_{t\wedge\tau}^{\tau}e^{ \int_{0}^{s}\bar{\rho}_r{\rm d}r}\bar{f}_s{\rm d}s\right)^{p}\bigg|\F_{r\wedge\tau}\right].
\end{align*}
\end{pro}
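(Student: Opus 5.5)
The idea is to exploit the scalar structure. I would dominate $|y|$ by a linear BSDE, write that bound as a Girsanov conditional expectation, and then remove the change of measure with a conditional Hölder inequality whose exponent $q$ is strictly smaller than $p$. The slack $\theta>1$ is exactly what makes this possible. The constant $\frac{\theta}{2(p-1)}$ in the condition on $\bar\rho_\cdot$ should come out of this Hölder step rather than from an Itô estimate of $|y|^p$, which is what produces the worse constant $1\wedge(p-1)$.

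\textbf{Step 1 (linear domination).} Set $b_s:=\sgn(y_s)\,z_s/|z_s|\,{\bf 1}_{|z_s|\neq0}$, so that $\bar\nu_s|z_s|=\bar\nu_s\,\sgn(y_s)\,z_s\cdot b_s$. Apply Tanaka's formula to $|y_\cdot|$ and then It\^o's formula to $e^{\int_0^s\bar\mu_r{\rm d}r}|y_s|$ on $[t\wedge\tau,\sigma_n]$. Here $\sigma_n$ are stopping times localizing both the stochastic integral and $\int\bar\nu^2$. The local time term has the favourable sign and is dropped. Assumption \ref{A:A} then gives
\begin{align*}
e^{\int_0^{t\wedge\tau}\bar\mu_r{\rm d}r}|y_{t\wedge\tau}|\le e^{\int_0^{\sigma_n}\bar\mu_r{\rm d}r}|y_{\sigma_n}|+\int_{t\wedge\tau}^{\sigma_n}e^{\int_0^s\bar\mu_r{\rm d}r}\bar f_s{\rm d}s-\int_{t\wedge\tau}^{\sigma_n}e^{\int_0^s\bar\mu_r{\rm d}r}\sgn(y_s)z_s\cdot({\rm d}B_s-\bar\nu_sb_s{\rm d}s).
\end{align*}
Let $D_s:=\mathcal E\big(\int_0^{\cdot}\bar\nu_rb_r\cdot{\rm d}B_r\big)_s$. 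Under $D_{\sigma_n}{\rm d}\mathbb P$ the last integral is a martingale by the choice of $\sigma_n$. Multiplying by $e^{\int_0^{t\wedge\tau}(\bar\rho_r-\bar\mu_r){\rm d}r}$ and using $\bar\rho-\bar\mu\ge\frac{\theta}{2(p-1)}\bar\nu^2\ge0$ gives
\begin{align*}
e^{\int_0^{t\wedge\tau}\bar\rho_r{\rm d}r}|y_{t\wedge\tau}|\le\E\Big[\frac{D_{\sigma_n}}{D_{t\wedge\tau}}\,e^{-\frac{\theta}{2(p-1)}\int_{t\wedge\tau}^{\sigma_n}\bar\nu_r^2{\rm d}r}\,X_n\Big|\F_{t\wedge\tau}\Big],
\end{align*}
where $X_n:=e^{\int_0^{\sigma_n}\bar\rho_r{\rm d}r}|y_{\sigma_n}|+\int_{t\wedge\tau}^{\tau}e^{\int_0^s\bar\rho_r{\rm d}r}\bar f_s{\rm d}s$.

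\textbf{Step 2 (Hölder with $q<p$).} Put $q':=1+\frac{\theta}{p-1}$ and $q:=\frac{q'}{q'-1}=\frac{p-1+\theta}{\theta}\in(1,p)$; here $\theta>1$ is used. We have
\[
\Big(\tfrac{D_{\sigma_n}}{D_{t\wedge\tau}}\Big)^{q'}=\mathcal E\Big(q'\!\int_{t\wedge\tau}^{\cdot}\bar\nu b\cdot{\rm d}B\Big)_{\sigma_n}\,e^{\frac{q'(q'-1)}{2}\int_{t\wedge\tau}^{\sigma_n}\bar\nu^2{\rm d}r},
\qquad \frac{q'(q'-1)}{2}=\frac{q'\theta}{2(p-1)}.
\]
Hence the $q'$-th power of the weight in Step 1 is a positive local martingale, i.e. a supermartingale, and its conditional expectation is at most $1$. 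Conditional Hölder then yields $e^{\int_0^{t\wedge\tau}\bar\rho_r{\rm d}r}|y_{t\wedge\tau}|\le \E[X_n^q|\F_{t\wedge\tau}]^{1/q}$. Letting $n\to\infty$ is justified by $y_\cdot\in S^p_\tau(\bar\rho_\cdot;\R)$: the family $X_n^q$ is dominated by an $L^{p/q}$ variable, hence uniformly integrable. This step also uses the terminal behaviour $y_{\sigma_n}\to\xi$, including on $\{\tau=+\infty\}$. The same argument run from any later time $s\in[t\wedge\tau,\tau]$, with the $\bar f$ integral starting at $t\wedge\tau$, gives
\[
e^{\int_0^s\bar\rho_r{\rm d}r}|y_s|\le M_s^{1/q},\qquad M_s:=\E\big[X^q\,|\,\F_s\big],\quad X:=e^{\int_0^\tau\bar\rho_r{\rm d}r}|\xi|+\int_{t\wedge\tau}^\tau e^{\int_0^u\bar\rho_r{\rm d}r}\bar f_u{\rm d}u.
\]

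\textbf{Step 3 (Doob).} Since $p/q>1$, the conditional Doob maximal inequality applied to the martingale $M_\cdot$ on $[t\wedge\tau,\tau]$, conditioned on $\F_{r\wedge\tau}$, gives
\[
\E\big[\sup_{s}M_s^{p/q}\,\big|\,\F_{r\wedge\tau}\big]\le\Big(\tfrac{p}{p-q}\Big)^{p/q}\E\big[X^p\,\big|\,\F_{r\wedge\tau}\big].
\]
Expanding $X^p\le 2^{p-1}(\cdots)$ yields the claim with $C_{p,\theta}=2^{p-1}\big(\frac{p}{p-q}\big)^{p/q}$, which depends only on $p$ and $\theta$.

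The main obstacle is the rigorous justification of Steps 1--2. The Girsanov density is not known to be a true martingale, since $\int\bar\nu^2$ is merely finite a.s., and $\tau$ may be infinite. I would therefore keep everything localized at $\sigma_n$ and use only the supermartingale property of the $q'$-power. The limit $n\to\infty$ then uses only the $\mathbb P$-integrability coming from $S^p_\tau$, never integrability under a changed measure.
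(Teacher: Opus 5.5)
Your linearize--Girsanov--H\"older--Doob route is different from the paper's, but as written it has a real gap in the treatment of the $\bar f$ term.

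\textbf{The gap.} Your Step~1 display is false for the $\bar f$ part, and Step~2 relies on it. After multiplying by $e^{\int_0^{t\wedge\tau}(\bar\rho_r-\bar\mu_r){\rm d}r}$, the $\bar f$ contribution is $\int_{t\wedge\tau}^{\sigma_n}e^{\int_0^s\bar\rho_r{\rm d}r}e^{-\int_{t\wedge\tau}^s(\bar\rho_r-\bar\mu_r){\rm d}r}\bar f_s{\rm d}s$. The damping here runs only over $[t\wedge\tau,s]$, not over $[t\wedge\tau,\sigma_n]$.

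A concrete counterexample: take $\tau=T$, $\xi=0$, $g(s,y,z)=\bar f_s$ with $\bar f\geq0$ deterministic and not a.e.\ zero on $[t,T]$, $\bar\mu=0$, $\bar\nu\equiv\nu>0$, $\bar\rho\equiv c\nu^2$ with $c:=\frac{\theta}{2(p-1)}$. Then $z\equiv0$, $D\equiv1$, and your display reads $e^{c\nu^2t}\int_t^T\bar f_s{\rm d}s\le\int_t^Te^{c\nu^2(s-T+t)}\bar f_s{\rm d}s$, which fails.

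Dropping the damping via $\bar\rho-\bar\mu\ge0$ does not rescue it. H\"older would then need a bound on $\E[(D_{\sigma_n}/D_{t\wedge\tau})^{q'}|\F_{t\wedge\tau}]$, and there is none, since $\bar\nu$ is unbounded.

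The correct form, via the tower property, is $\E\big[\int_{t\wedge\tau}^{\sigma_n}W_s e^{\int_0^s\bar\rho_r{\rm d}r}\bar f_s{\rm d}s\,\big|\,\F_{t\wedge\tau}\big]$ with $W_s:=\frac{D_s}{D_{t\wedge\tau}}e^{-c\int_{t\wedge\tau}^s\bar\nu_r^2{\rm d}r}$. This needs a moment of $\sup_sW_s$. But $W^{q'}$ is only dominated by the positive local martingale $\mathcal{E}\big(q'\int_{t\wedge\tau}^{\cdot}\bar\nu_rb_r\cdot{\rm d}B_r\big)$. That yields only the weak-type bound $\mathbb{P}(\sup_sW_s^{q'}\ge\lambda|\F_{t\wedge\tau})\le\lambda^{-1}$, not $\E[\sup_sW_s^{q'}|\F_{t\wedge\tau}]\le C$. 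So the conjugate pair $(q',q)$ is critical and unusable for this term. Consequently your bound $e^{\int_0^s\bar\rho_r{\rm d}r}|y_s|\le M_s^{1/q}$ is not established.

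\textbf{The repair.}
\begin{enumerate}
\item Pick $a\in(\frac{p}{p-1},q')$. This interval is nonempty precisely because $\theta>1$.
\item The weak-type bound gives $\E[\sup_sW_s^a|\F_{t\wedge\tau}]\le\frac{q'}{q'-a}$.
\item Apply H\"older with exponents $(a,a^*)$, where $a^*:=\frac{a}{a-1}\in(q,p)$, to $\sup_sW_s\cdot\int_{t\wedge\tau}^{\tau}e^{\int_0^s\bar\rho_r{\rm d}r}\bar f_s{\rm d}s$.
\item For the terminal term, keep your $(q',q)$ argument and raise the exponent $q$ to $a^*$ by conditional Jensen.
\item Run Doob with exponent $p/a^*>1$.
\end{enumerate}
This gives the estimate with a constant depending only on $p$ and $\theta$.

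\textbf{Comparison with the paper.} With this fix your proof is a valid alternative, closer in spirit to the paper's proof of \cref{thm:4.1}. The paper instead applies It\^o--Tanaka to $|e^{\int_0^\cdot\bar\rho_r{\rm d}r}y_\cdot|^p$. It absorbs $p\bar\nu|y'|^{p-1}|z'|$ by Young into $\frac{p\theta}{2(p-1)}\bar\nu^2|y'|^p+\frac{c(p)}{\theta}|y'|^{p-2}|z'|^2$. The leftover $(1-\frac{1}{\theta})c(p)\int|y'|^{p-2}|z'|^2$ then controls the BDG term. So, contrary to your motivating remark, the constant $\frac{\theta}{2(p-1)}$ does come out of the $|y|^p$ It\^o estimate. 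The factor $1\wedge(p-1)$ is needed only for the $z$-bound in \cref{pro:2.3}.
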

\begin{proof}[\bf Proof]
The proof can be regarded as a slight modification of Proposition 2.3 in \citet{Zhang2026}, where $\bar\rho_t\geq \bar\mu_t+\frac{\theta}{2[1\wedge(p-1)]}\bar\nu_t^2$ for each $t\in[0,\tau]$, and then $z_\cdot\in M_\tau^{p}(\bar\rho_\cdot;\R^{d})$ (see \cref{pro:2.3} below for details). For readers' convenience, we list it as follows. Define $y_t^{\prime}=e^{\int_{0}^{t}\bar{\rho}_r{\rm d}r}y_t$, $z_t^{\prime}=e^{\int_{0}^{t}\bar{\rho}_r{\rm d}r}z_t$ and $f_t^{\prime}=e^{\int_{0}^{t}\bar{\rho}_r{\rm d}r}\bar{f}_t$ for each $t\in [0,\tau]$. Then $(y_t^{\prime})_{t\in[0,\tau]}\in S_\tau^{p}(0;\R)$, and it suffices to prove that there exists a constant $C_{p,\theta} >0$ depending only on $p$ and $\theta$ such that for each $0\leq r\leq t< +\infty$,
\begin{align}\label{6.1}
\E\left[\sup_{s\in[t\wedge\tau,\tau]}|y_s^{\prime}|^{p}\bigg|
\F_{r\wedge\tau}\right]\leq
C_{p,\theta}\E\left[|\xi^{\prime}|^{p}+\left(
\int_{t\wedge\tau}^{\tau}f_s^{\prime}{\rm d}s\right)^{p}\bigg|\F_{r\wedge\tau}\right].
\end{align}
For each integer $n\geq1$, define the following $(\F_t)$-stopping time
\begin{align}\label{6.2}
\tau_{n}:=\inf \left\{t \geq0: \int_{0}^{t}|z_s^{\prime}|^{2} {\rm d}s \geq n\right\}\wedge \tau,
\end{align}
with the convention that $\inf \emptyset=+\infty$. In light of assumption \ref{A:A}, applying It\^{o}-Tanaka's formula to $|y_t^{\prime}|^{p}$ yields that for each $t\geq0$ and $n\geq1$,
\begin{align}\label{6.3}
\begin{split}
&|y^{\prime}_{t\wedge \tau_n}|^{p}+c(p)\int_{t\wedge \tau_n}^{\tau_n} |y^{\prime}_s|^{p-2}{\bf 1}_{|y^{\prime}_s|\neq0}|z^{\prime}_s|^2{\rm d}s+p \int_{t\wedge\tau_n}^{\tau_n} \bar{\rho}_s|y^{\prime}_s|^{p}{\rm d}s\\
& \ \ \leq |y^{\prime}_{\tau_n}|^{p}+p\int_{t\wedge\tau_n}^{\tau_n} (\bar\mu_{s}|y^{\prime}_s|^{p}+\bar\nu_s
|y^{\prime}_s|^{p-1}|z^{\prime}_s|+|y^{\prime}_s|^{p-1}f^{\prime}_{s}){\rm d}s
-p\int_{t\wedge\tau_n}^{\tau_n} |y^{\prime}_s|^{p-2}{\bf 1}_{|y^{\prime}_s|\neq0}y^{\prime}_s z^{\prime}_s\cdot{\rm d}B_s,
\end{split}
\end{align}
where $c(p):=\frac{p(p-1)}{2}$.

We can verify that $\{M_t=\int_{0}^{t} |y^{\prime}_s|^{p-2}{\bf 1}_{|y^{\prime}_s|\neq0}y^{\prime}_s z^{\prime}_s\cdot{\rm d}B_s\}_{t\in[0,\tau_n]}$ is a uniformly integrable martingale for each $n\geq1$. Indeed, by virtue of \eqref{6.2} and $(y^{\prime}_t)_{t\in[0,\tau]} \in S_\tau^{p}(0;\R)$, the BDG
inequality (see Theorem 1 in \citet{Ren2008BDG}) and Young's inequality, we can obtain that for each $n\geq1$,
\begin{align}\label{6.4}
\begin{split}
&\E\left[\sup_{t\in[0,\tau_n]}\bigg|\int_{0}^{t} |y^{\prime}_s|^{p-2}{\bf 1}_{|y^{\prime}_s|\neq0}y^{\prime}_s z^{\prime}_s\cdot{\rm d}B_s\bigg|\right]
\leq 2\sqrt{2}\E\left[\left(\int_{0}^{\tau_n}|y^{\prime}_s|^{2p-2}|z^{\prime}_{s}|^2 {\rm d}{s}\right)^{\frac{1}{2}}\right]\\
&\ \ \leq 2\sqrt{2}\E\left[\left(\sup_{s\in[0,\tau_n]}|y^{\prime}_s|^{p-1}\right)
\left(\int_{0}^{\tau_n}|z^{\prime}_{s}|^2{\rm d}{s}\right)^{\frac{1}{2}}\right]\\
&\ \ \leq \frac{2\sqrt{2}(p-1)}{p}\E\left[\sup_{s\in[0,\tau]}
|y^{\prime}_s|^{p}\right]+\frac{2\sqrt{2}}{p}
\E\left[\left(\int_{0}^{\tau_n}|z^{\prime}_{s}|^2{\rm d}{s}\right)^{\frac{p}{2}}\right]<+\infty.
\end{split}
\end{align}
Moreover, in light of assumption \ref{A:A}, by Young's inequality and H\"{o}lder's inequality we deduce that
\begin{align}\label{6.5}
\begin{split}
&\int_0^{\tau_n}(|\bar{\mu}_s||y^{\prime}_s|^{p}+\bar\nu_s
|y^{\prime}_s|^{p-1}|z^{\prime}_s|+|y^{\prime}_s|^{p-1}f^{\prime}_{s}){\rm d}s\\
&\ \ \leq  \left(\sup\limits_{s\in [0,\tau_n]}|y^{\prime}_s|^{p}\right)\int_0^{\tau_n} |\bar{\mu}_s|{\rm d}s+\left(\sup\limits_{s\in [0,\tau_n]}|y^{\prime}_s|^{p-1}\right) \int_0^{\tau_n} \bar\nu_s |z^{\prime}_s|{\rm d}s+ \left(\sup\limits_{s\in [0,\tau_n]}|y^{\prime}_s|^{p-1}\right) \int_0^{\tau_n} f^{\prime}_{s}{\rm d}s\\
&\ \ \leq \left(\sup\limits_{s\in [0,\tau]}|y^{\prime}_s|^{p}\right)\int_0^\tau |\bar{\mu}_s|{\rm d}s +\frac{2(p-1)}{p}\sup\limits_{s\in [0,\tau]}|y^{\prime}_s|^{p}\\
&\qquad +\frac{1}{p}\left(\int_0^\tau v_s^2{\rm d}s \right)^{\frac{p}{2}}\left(\int_0^{\tau_n} |z^{\prime}_s|^2{\rm d}s \right)^{\frac{p}{2}}+\frac{1}{p}\left(\int_0^\tau f^{\prime}_s{\rm d}s\right)^{p}<+\infty.
\end{split}
\end{align}
Combining \eqref{6.3}-\eqref{6.5} yields that for each $t\geq0$ and $n\geq1$,
\begin{align}\label{6.6}
\int_{t\wedge\tau_n}^{\tau_n} |y^{\prime}_s|^{p-2}{\bf 1}_{|y^{\prime}_s|\neq0}|z^{\prime}_s|^2{\rm d}s<+\infty.
\end{align}
Furthermore, using the inequality $p ab\leq \frac{p^2 \theta}{4c(p)} a^2+\frac{c(p)}{\theta}b^2$ we obtain
\begin{align}\label{6.7}
p\bar\nu_s|y^{\prime}_s|^{p-1}|z^{\prime}_s|=p(\bar\nu_s
|y^{\prime}_s|^{\frac{p}{2}})(|y^{\prime}_s|^{\frac{p}{2}-1}|z^{\prime}_s|)
\leq \frac{p^2 \theta}{4c(p)} \bar\nu_{s}^2|y^{\prime}_s|^{p}+\frac{c(p)}{\theta}
|y^{\prime}_s|^{p-2}{\bf 1}_{|\bar{y}_s|\neq0}|z^{\prime}_s|^2, \ \ s\in[0,\tau_n].
\end{align}
In light of \eqref{6.6}, putting \eqref{6.7} into \eqref{6.3} yields that for each $t\geq0$ and $n\geq1$,
\begin{align}\label{6.8}
\begin{split}
&|y^{\prime}_{t\wedge\tau_n}|^{p}+(1-\frac{1}{\theta})c(p)
\int_{t\wedge\tau_n}^{\tau_n} |y^{\prime}_s|^{p-2}{\bf 1}_{|y^{\prime}_s|\neq0}|z^{\prime}_s|^2{\rm d}s
+p \int_{t\wedge\tau_n}^{\tau_n} \left[\bar{\rho}_s-(\bar{\mu}_s+\frac{\theta}{2(p-1)}
{\bar\nu_s}^2)\right]|y^{\prime}_s|^{p}{\rm d}s\\
&\ \ \leq |y^{\prime}_{\tau_n}|^{p}+p\int_{t\wedge\tau_n}^{\tau_n} |y^{\prime}_s|^{p-1}f^{\prime}_s{\rm d}s
-p\int_{t\wedge\tau_n}^{\tau_n} |y^{\prime}_s|^{p-2}{\bf 1}_{|y^{\prime}_s|\neq0}y^{\prime}_s z^{\prime}_s\cdot{\rm d}B_s.
\end{split}
\end{align}
Then, in light of \eqref{6.4}, taking the conditional mathematical expectation with respect to $\F_{r\wedge\tau_m}$ on both sides of \eqref{6.8} yields that for each $0\leq r\leq t<+\infty$ and $n\geq m\geq1$,
\begin{align}\label{6.9}
\E\left[\int_{t\wedge\tau_n}^{\tau_n} |y^{\prime}_s|^{p-2}{\bf 1}_{|y^{\prime}_s|\neq0}|z^{\prime}_s|^2{\rm d}s\bigg|\F_{r\wedge\tau_m}\right]\leq \frac{\theta}{(\theta-1)c(p)}\E\left[|y^{\prime}_{\tau_n}|^{p}+p
\int_{t\wedge\tau_n}^{\tau_n} |y^{\prime}_s|^{p-1}f^{\prime}_s{\rm d}s\bigg|\F_{r\wedge\tau_m}\right].
\end{align}

On the other hand, by virtue of the conditional BDG inequality in \cite[Theorem 1]{Ren2008BDG} and the elementary inequality $2ab\leq a^2+b^2$, we also deduce that for each $0\leq r\leq t<+\infty$ and $n\geq m\geq1$,
\begin{align*}
&p\E\left[\sup_{u \in [t\wedge\tau_n,\tau_n]}\bigg|\int_{u}^{\tau_n}|y^{\prime}_s|^{p-2}{\bf 1}_{|y^{\prime}_s|\neq0}y^{\prime}_s z^{\prime}_s\cdot{\rm d}B_s\bigg| \bigg|\F_{r\wedge\tau_m}\right]\\
&\ \  \leq
2\sqrt{2}p\E\left[\left(\int_{t\wedge\tau_n}^{\tau_n}|y^{\prime}_s|^{2p-2}{\bf 1}_{|y^{\prime}_s|\neq0}|z^{\prime}_{s}|^2 {\rm d}{s}\right)^{\frac{1}{2}}\bigg|\F_{r\wedge\tau_m}\right]\\
&\ \ \leq 2\sqrt{2}p\E\left[\left(\sup_{s\in[t\wedge\tau_n,\tau_n]}
|y^{\prime}_s|^{\frac{p}{2}}\right)\left(\int_{t\wedge\tau_n}^{\tau_n}
|y^{\prime}_s|^{p-2}{\bf 1}_{|y^{\prime}_s|\neq0}|z^{\prime}_{s}|^2{\rm d}{s}\right)^{\frac{1}{2}}\bigg|\F_{r\wedge\tau_m}\right]\\
&\ \ \leq \frac{1}{2}\E\left[\sup_{s\in[t\wedge\tau_n,\tau_n]}|y^{\prime}_s|^{p}
\bigg|\F_{r\wedge\tau_m}\right]+4p^2
\E\left[\int_{t\wedge\tau_n}^{\tau_n}|y^{\prime}_s|^{p-2}{\bf 1}_{|y^{\prime}_s|\neq0}|z^{\prime}_{s}|^2{\rm d}{s}\bigg|\F_{r\wedge\tau_m}\right].
\end{align*}
Then, in light of the last inequality and the condition that $\bar{\rho}_t\geq \bar{\mu}_t+\frac{\theta}{2(p-1)}{\bar\nu}_t^2$ along with \eqref{6.8} and \eqref{6.9}, we have that for each $0\leq r\leq t< +\infty$ and $n\geq m\geq1$,
\begin{align}\label{6.10}
\E\left[\sup_{s\in[t\wedge\tau_n,\tau_n]}|y^{\prime}_s|^{p}\bigg|
\F_{r\wedge\tau_m}\right]
\leq \left(2+\frac{8p^2 \theta}{(\theta-1)c(p)}\right)\E\left[|y^{\prime}_{\tau_n}|^{p}
+p\int_{t\wedge\tau_n}^{\tau_n} |y^{\prime}_s|^{p-1}f^{\prime}_s{\rm d}s\bigg|\F_{r\wedge\tau_m}\right].
\end{align}
Letting $M_{p,\theta}:=p(2+\frac{8p^2 \theta}{(\theta-1)c(p)})$, by using Young's inequality we deduce that for each $0\leq r\leq t<+\infty$ and $n\geq m\geq1$,
\begin{align}\label{6.11}
\begin{split}
&M_{p,\theta}\E\left[\int_{t\wedge\tau_n}^{\tau_n} |y^{\prime}_s|^{p-1}f^{\prime}_s{\rm d}s\bigg|\F_{r\wedge\tau_m}\right]\\
&\ \ \leq \E\left[\sup_{s\in[t\wedge\tau_n,\tau_n]}\left(\left(\frac{p}{2p-2}\right)^{\frac{p-1}{p}}|y^{\prime}_s|^{p-1}\right)\left(M_{p,\theta}\left(\frac{p}{2p-2}\right)^{\frac{1-p}{p}}\int_{t\wedge\tau_n}^{\tau_n} f^{\prime}_s{\rm d}s\right)\bigg|\F_{r\wedge\tau_m}\right]\\
&\ \ \leq \frac{1}{2}\E\left[\sup_{s\in[t\wedge\tau_n,\tau_n]}|y^{\prime}_s|^{p}
\bigg|\F_{r\wedge\tau}\right]+\frac{M_{p,\theta}}{p}
\left(\frac{p}{2p-2}\right)^{1-p}\E\left[
\left(\int_{t\wedge\tau_n}^{\tau_n}f^{\prime}_s{\rm d}s\right)^{p}\bigg|\F_{r\wedge\tau_m}\right].
\end{split}
\end{align}
Then, it follows from \eqref{6.10} and \eqref{6.11} that there exists a constant $C_{p,\theta} >0$ depending only on $p$ and $\theta$ such that for each $0\leq r\leq t< +\infty$ and $n\geq m\geq1$,
\begin{align}\label{6.12}
\E\left[\sup_{s\in[t\wedge\tau_n,\tau_n]}|y_s^{\prime}|^{p}\bigg|
\F_{r\wedge\tau_m}\right]\leq
C_{p,\theta}\E\left[|y^{\prime}_{\tau_n}|^{p}+\left(
\int_{t\wedge\tau_n}^{\tau_n}f_s^{\prime}{\rm d}s\right)^{p}\bigg|\F_{r\wedge\tau_m}\right].
\end{align}
In light of $(y_t^{\prime})_{t\in[0,\tau]}\in S_\tau^{p}(0;\R)$ and assumption \ref{A:A}, letting $n\rightarrow \infty$ and using Fatou's lemma on both sides of \eqref{6.12} yields that for each $0\leq r\leq t<+\infty$ and $m\geq1$,
\begin{align*}
\E\left[\sup_{s\in[t\wedge\tau,\tau]}|y_s^{\prime}|^{p}\bigg|
\F_{r\wedge\tau_m}\right]\leq
C_{p,\theta}\E\left[|\xi^{\prime}|^{p}+\left(
\int_{t\wedge\tau}^{\tau}f_s^{\prime}{\rm d}s\right)^{p}\bigg|\F_{r\wedge\tau_m}\right].
\end{align*}
Thus, the desired assertion \eqref{6.1} follows by sending $m\rightarrow\infty$ and using the martingale convergence theorem (see Corollary A.9 in Appendix C of \citet{Oksendal2005}) on both sides of the last inequality.
\end{proof}
Next, we present the other a priori estimate which has already been proved in \cite[Proposition 2.2]{Zhang2026}. We omit its proof here.

\begin{pro}\label{pro:2.3}
Let the assumptions of \cref{pro:2.2} hold. Assume further that
$$\bar\rho_t\geq \bar\mu_t+\frac{\theta}{2[1\wedge(p-1)]}\bar\nu_t^2,\ \ t\in[0,\tau],$$
then $z_\cdot\in M_\tau^{p}(\bar\rho_\cdot;\R^{d})$, and there exists a constant $C_{p,\theta}^{\prime}>0$ depending only on $p$ and $\theta$ such that for each $0\leq r\leq t< +\infty$, we have
$$
\E\left[\left(\int_{t\wedge\tau}^{\tau}e^{2\int_{0}^{s}\bar{\rho}_{r}{\rm d}r}|z_s|^2{\rm d}s\right)^{\frac{p}{2}}\bigg|\F_{r\wedge\tau}\right]
\leq
C_{p,\theta}^{\prime}\E\left[\sup_{s\in[t\wedge\tau,\tau]}\left(e^{p \int_{0}^{s}\bar{\rho}_r{\rm d}r}|y_s|^{p}\right) +\left(\int_{t\wedge\tau}^{\tau}e^{ \int_{0}^{s}\bar{\rho}_r{\rm d}r}\bar{f}_s{\rm d}s\right)^{p}\bigg|\F_{r\wedge\tau}\right].\vspace{0.2cm}
$$
\end{pro}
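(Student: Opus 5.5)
We work with the weighted processes $y'_t=e^{\int_0^t\bar\rho_r{\rm d}r}y_t$, $z'_t=e^{\int_0^t\bar\rho_r{\rm d}r}z_t$, $f'_t=e^{\int_0^t\bar\rho_r{\rm d}r}\bar f_t$, exactly as in the proof of \cref{pro:2.2}. This reduces the statement to the unweighted estimate
$$\E\Big[\Big(\int_{t\wedge\tau}^\tau|z'_s|^2{\rm d}s\Big)^{p/2}\Big|\F_{r\wedge\tau}\Big]\le C'_{p,\theta}\E\Big[\sup_{s\in[t\wedge\tau,\tau]}|y'_s|^p+\Big(\int_{t\wedge\tau}^\tau f'_s{\rm d}s\Big)^p\Big|\F_{r\wedge\tau}\Big].$$
We localize with the same stopping times $\tau_n$ from \eqref{6.2}, so all stochastic integrals are genuine martingales, and then remove the localization at the end by Fatou's lemma and the martingale convergence theorem, as in \cref{pro:2.2}. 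The membership $z_\cdot\in M^p_\tau(\bar\rho_\cdot;\R^d)$ follows once the estimate is proved with $r=t=0$, because the right-hand side is finite since $y_\cdot\in S^p_\tau(\bar\rho_\cdot;\R)$ and $f'$ is $L^p$-integrable by assumption.

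The argument splits into two cases according to the size of $p$.

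\textbf{Case $p\ge2$.} Here $1\wedge(p-1)=1$, so $\bar\rho\ge\bar\mu+\frac\theta2\bar\nu^2$. We apply Itô's formula to $|y'_t|^2$ rather than $|y'_t|^p$. Using (A), together with $2\bar\nu|y'||z'|\le\theta\bar\nu^2|y'|^2+\frac1\theta|z'|^2$, we get
$$(1-\tfrac1\theta)\int_{t\wedge\tau_n}^{\tau_n}|z'_s|^2{\rm d}s\le|y'_{\tau_n}|^2+2\int_{t\wedge\tau_n}^{\tau_n}|y'_s|f'_s{\rm d}s-2\int_{t\wedge\tau_n}^{\tau_n}y'_sz'_s\cdot{\rm d}B_s.$$
We then raise both sides to the power $p/2$ and take conditional expectations. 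The stochastic integral is handled by the conditional BDG inequality: it is bounded by $C\E[(\sup|y'|^2\int|z'|^2)^{p/4}|\cdot]$, and Young's inequality turns this into $\varepsilon\E[(\int|z'|^2)^{p/2}|\cdot]+C_\varepsilon\E[\sup|y'|^p|\cdot]$. The $f'$ term is bounded by $\sup|y'|\int f'$, which Young's inequality splits into $\sup|y'|^p$ plus $(\int f')^p$. Absorbing the $\varepsilon$-term is legitimate because localization makes the left-hand side finite.

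\textbf{Case $1<p<2$.} Now $\bar\rho\ge\bar\mu+\frac{\theta}{2(p-1)}\bar\nu^2$. Itô's formula for $|y'|^2$ would need the stronger drift $\frac\theta2\bar\nu^2|y'|^2$, and this is exactly what our condition provides, since $\frac{\theta}{2(p-1)}\ge\frac\theta2$. However, the $|\bar\mu|$ term must be controlled through the sign of $\bar\rho-\bar\mu$, not through its absolute size, so we cannot simply bound it away. To handle this we use the estimate \eqref{6.9} from \cref{pro:2.2} and the standard trick of \citet{Briand2003}. We write
$$\int|z'|^2=\int|y'|^{2-p}|y'|^{p-2}|z'|^2\le\sup|y'|^{2-p}\int|y'|^{p-2}{\bf 1}_{|y'|\ne0}|z'|^2.$$
Raising to the power $p/2$ and applying Hölder/Young reduces matters to bounding $\E[(\int|y'|^{p-2}|z'|^2)^{p/2}|\cdot]$. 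For this we return to \eqref{6.8}: it controls $\int|y'|^{p-2}|z'|^2$ pathwise by $\sup|y'|^p+\sup|y'|^{p-1}\int f'$ plus a martingale term. Raising \eqref{6.8} to the power $p/2$ and applying BDG to the martingale, whose bracket is $\int|y'|^{2p-2}|z'|^2\le\sup|y'|^p\int|y'|^{p-2}|z'|^2$, closes the estimate after a Young absorption.

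We expect the main obstacle to be the sub-quadratic case. There the weight condition only gives positivity of the drift in the $|y'|^p$ Itô expansion, so the estimate must be routed entirely through the localized \eqref{6.8-type} inequality. We also need to make sure the absorption steps involve only finite quantities, which we guarantee using $\tau_n$ and \eqref{6.6}.
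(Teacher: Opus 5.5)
Your case $p\ge2$ is correct. Itô's formula for $|y'|^2$, the bound $2\bar\nu|y'||z'|\le\theta\bar\nu^2|y'|^2+\frac1\theta|z'|^2$, dropping the nonnegative term $2\int(\bar\rho_s-\bar\mu_s-\frac\theta2\bar\nu_s^2)|y'_s|^2{\rm d}s$, and then BDG, Young and absorption on $[t\wedge\tau_n,\tau_n]$ is the standard argument. The paper omits this proof and refers to \citet[Proposition 2.2]{Zhang2026}. Note that $\frac{1}{1\wedge(p-1)}=\max\{1,\frac{1}{p-1}\}$, so the hypothesis is exactly ``$\bar\rho\ge\bar\mu+\frac\theta2\bar\nu^2$'' together with the hypothesis of \cref{pro:2.2}. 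This is consistent with your route and with \cref{rmk:2.1}.

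The case $1<p<2$, however, does not work as written. You have $(\int|z'|^2)^{p/2}\le\sup|y'|^{(2-p)p/2}X^{p/2}$ with $X:=\int|y'|^{p-2}{\bf 1}_{|y'|\neq0}|z'|^2{\rm d}s$. The only way to separate the two factors while keeping $\sup|y'|^p$ is Young or Hölder with exponents $\frac{2}{2-p}$ and $\frac2p$. This gives $\frac{2-p}{2}\sup|y'|^p+\frac p2X$, so what must be bounded is the first moment $\E[X|\cdot]$, not $\E[X^{p/2}|\cdot]$.

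Your next step, raising \eqref{6.8} to the power $p/2$, produces terms such as $\sup|y'|^{p^2/2}$ and $(\sup|y'|^{p-1}\int f')^{p/2}$. These have the wrong homogeneity: under $(y,z,\bar f)\mapsto\lambda(y,z,\bar f)$ they scale like $\lambda^{p^2/2}$, not $\lambda^p$. So the chain cannot close into the stated estimate.

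The repair is immediate. $\E[X|\F_{r\wedge\tau_m}]$ is exactly what \eqref{6.9} bounds, with no BDG needed. Then split $\int|y'|^{p-1}f'\le\sup|y'|^{p-1}\int f'$ by Young. You should also justify your first identity, which is false as written on $\{y'=0\}$. The justification is $\int{\bf 1}_{\{y'_s=0\}}|z'_s|^2{\rm d}s=0$, by the occupation-time formula.

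Simpler still, the case split is unnecessary. For $1<p<2$ one has $\frac{\theta}{2(p-1)}\ge\frac\theta2$, so your $|y'|^2$ argument goes through verbatim. BDG holds for the exponent $p/2\in(\frac12,1)$, and $(a+b+c)^{p/2}\le a^{p/2}+b^{p/2}+c^{p/2}$ replaces the convexity constant. The obstruction you cite, that the $\bar\mu$-term must be controlled through the sign of $\bar\rho-\bar\mu$, is not one. In the $|y'|^2$ expansion that sign is used exactly as in the case $p\ge2$.
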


\begin{rmk}\label{rmk:2.1}
From the proof of \cite[Proposition 2.2]{Zhang2026}, it is not difficult to see that the conclusion of \cref{pro:2.3} does not necessarily hold under the assumptions of \cref{pro:2.2}.
\end{rmk}

In the rest of this paper, we always assume that $p>1$ and $\theta>1$ are two given constants, and $\rho_t(\omega)$, $\mu_t(\omega)$: $\Omega \times [0,\tau]\mapsto \R$ and $\nu_t(\omega)$, $f_t(\omega)$: $\Omega \times [0,\tau]\mapsto \R_+$ are four $(\F_t)$-progressively measurable processes satisfying $\int_{0}^{\tau}\left(|\rho_t|+|\mu_t|+\nu_t^2\right){\rm d}t<+\infty$,
\begin{align}\label{eq:2.1}
\rho_t\geq \mu_t+\frac{\theta}{2(p-1)}\nu_t^2, \ \ t\in[0,\tau]
\end{align}
and
\begin{align}\label{eq:2.2}
\E\left[\left(\int_0^\tau e^{\int_{0}^{t}\rho_r{\rm d}r} f_t{\rm d}t\right)^{p}\right]<+\infty.
\end{align}

\section{An existence result}
\label{sec:3-An existence result}
\setcounter{equation}{0}

In this section, in a more general weighted space than those in \citet{Li2025}, \citet{LiFan2024SD} and \citet{Zhang2026}, we will propose and prove an existence result on the weighted $L^p$ solution of BSDE \eqref{BSDE1.1} with a generator $g$ satisfying a stochastic one-sided linear growth in the state variables $(y,z)$.

\subsection{Statement of the main result}

Let us first introduce the following assumptions on $g$.
\begin{enumerate}
{\addtolength{\leftskip}{2em}\item[\textbf{(H1)} \ (i)]\makeatletter\def\@currentlabel{(H1)}\makeatother\label{A:H1} For each $t\in [0,\tau]$, $g(\omega,t,\cdot,\cdot)$ is continuous.

\item[(ii)] $g$ has a stochastic one-sided linear growth in $(y,z)$, i.e., for each $(t,y,z) \in [0,\tau]\times\R\times\R^d$,
    $$\sgn(y)g(\omega,t,y,z) \leq f_t(\omega)+\mu_t(\omega)|y|+ \nu_t(\omega)|z|.$$

\item[(iii)] $g$ has a general growth in $y$ and a quadratic growth in $z$, i.e., there exist an increasing function $\varphi(x):\R_+\mapsto \R_+$ and a function $\phi_t(\omega,x)\in\mathbf{S}$ such that for each $(t,y,z) \in  [0,\tau]\times\R\times\R^d$,
    $$|g(\omega,t,y,z)| \leq \phi_t(\omega,|y|) + \varphi(|y|)|z|^2.$$}
\end{enumerate}

\begin{thm}\label{thm:3.2}
Let the generator $g$ satisfy assumption \ref{A:H1}. Assume further that either $\mu_\cdot\in \R_+$ or \ref{A:H1}(iii) is replaced with the following stronger condition: $g$ has a general growth in $y$ and a stochastic linear growth in $z$, i.e., there exists a function $\phi_t(\omega,x)\in\mathbf{S}$ such that for each $(t,y,z)\in[0,\tau]\times\R\times\R^d$,
\begin{align}\label{eq:2.3*}
|g(\omega,t,y,z)|\leq \phi_t(\omega,|y|)+\nu_t(\omega)|z|.
\end{align}
Then, for each $\xi\in L_\tau^p(\rho_\cdot;\R)$, BSDE \eqref{BSDE1.1} admits a solution $(y_t,z_t)_{t\in[0,\tau]}$ such that $y_\cdot\in S_\tau^p(\rho_\cdot;\R)$. Furthermore, if the process $\rho_\cdot$ also satisfies
\begin{align}\label{eq:2.3}
\rho_t\geq \mu_t+\frac{\theta}{2[1\wedge(p-1)]}\nu_t^2, \ \ t\in[0,\tau],
\end{align}
then $z_\cdot\in M_\tau^p(\rho_\cdot;\R^{d})$.\vspace{0.1cm}
\end{thm}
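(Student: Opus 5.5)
The plan is to reduce \cref{thm:3.2} to a bounded-data existence result and then pass to the limit. Two tools drive this: the a priori estimate of \cref{pro:2.2}, and the localization method of \citet{BriandHu2006PTRF}.

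\textbf{Step 1: exponential shift.} Set $\bar y_t=e^{\int_0^t\rho_r{\rm d}r}y_t$ and $\bar z_t=e^{\int_0^t\rho_r{\rm d}r}z_t$. Then $(y,z)$ solves BSDE$(\xi,\tau,g)$ if and only if $(\bar y,\bar z)$ solves BSDE$(\bar\xi,\tau,\bar g)$, where $\bar\xi=e^{\int_0^\tau\rho_r{\rm d}r}\xi$ and
$$\bar g(t,y,z)=e^{\int_0^t\rho_r{\rm d}r}g\big(t,e^{-\int_0^t\rho_r{\rm d}r}y,e^{-\int_0^t\rho_r{\rm d}r}z\big)-\rho_t y.$$
The new generator satisfies \ref{A:H1}(ii) with the data $\bar f=e^{\int_0^\cdot\rho}f$, $\bar\mu=\mu-\rho\le-\frac{\theta}{2(p-1)}\nu^2\le0$ and $\bar\nu=\nu$. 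Hence $\rho\equiv0$ is admissible in \cref{pro:2.2} for $\bar g$. Growth condition (iii), or \eqref{eq:2.3*}, is preserved pathwise, because $\int_0^\tau|\rho|<\infty$ keeps the exponential factor bounded on each path. This may require $\phi$ and $\varphi$ to be replaced by path-dependent versions, which is harmless after localization.

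\textbf{Step 2: truncation.} For $n\ge1$, put $\xi_n=(\bar\xi\wedge n)\vee(-n)$. Replace $\bar f$ by $\bar f{\bf 1}_{\bar f\le n}$, and replace the unbounded coefficients by localized versions using the stopping times
$$\sigma_n=\inf\Big\{t:\int_0^t(|\rho_r|+|\mu_r|+\nu_r^2+\phi_r(n))\,{\rm d}r\ge n\Big\}\wedge\tau .$$
On each truncated problem, a bounded-terminal-value existence result applies. I would construct it by the classical route: approximate $g$ from below by inf-convolution Lipschitz generators (Lepeltier--San Martin), and use a comparison argument to get monotone limits. In the quadratic case with $\mu\ge0$, I would first truncate $g$ in $|y|$ at the level of a deterministic bound for $|y|$, and then invoke a Kobylanski-type existence result. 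When $\mu$ may be negative, only the linear-growth alternative \eqref{eq:2.3*} is assumed; this avoids quadratic-BSDE tools, so that sign issues are handled by comparison with linear BSDEs.

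\textbf{Step 3: the main obstacle.} This step passes to the limit, and it is the one I expect to be hardest. \cref{pro:2.2} only gives control of $\sup|y|$ in $L^p$, not of $z$, when \eqref{eq:2.1} holds but \eqref{eq:2.3} fails. Following \citet{BriandHu2006PTRF}, I would consider the solutions $(y^n,z^n)$ of the localized problems. Conditionally on $\F_{t\wedge\tau}$, the estimate of \cref{pro:2.2} gives
$$|\bar y^n_t|^p\le C_{p,\theta}\,\E\Big[|\bar\xi|^p+\Big(\int_{t\wedge\tau}^\tau\bar f_s{\rm d}s\Big)^p\,\Big|\,\F_{t\wedge\tau}\Big]=:X_t ,$$
where $X$ is a continuous martingale-type bound independent of $n$. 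Then, along the stopping times $\tau_k=\inf\{t:X_t\ge k\}\wedge\sigma_k$, each $y^n$ is bounded by $k$ on $[0,\tau_k]$. There $g$ is controlled by $\phi_t(k)+\varphi(k)|z|^2$, so the standard monotone stability (or, in the linear case, Cauchy estimates in $H^2$ on $[0,\tau_k]$) yields convergence of $(y^n,z^n)$ on $[0,\tau_k]$, with consistency in $k$. Pasting over $k$ gives an adapted solution on $[0,\tau)$. Since $\tau_k\uparrow\tau$, the terminal condition $y_t\to\xi$ follows from the uniform conditional bound together with dominated convergence of $X$.

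\textbf{Step 4: integrability.} Fatou's lemma applied to the bound of \cref{pro:2.2} gives $y\in S^p_\tau(\rho;\R)$. If \eqref{eq:2.3} also holds, \cref{pro:2.3} gives $z\in M^p_\tau(\rho;\R^d)$.
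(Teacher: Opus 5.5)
\paragraph{A genuine gap in the limit passage (Step 3).}
The limit passage in Step 3 appeals to ``standard monotone stability'' or to ``Cauchy estimates in $H^2$'', but the approximations you build support neither.
\begin{itemize}
\item Neither $\xi_n=(\bar\xi\wedge n)\vee(-n)$ nor the $n$-dependent localization by $\sigma_n$ is monotone in $n$.
\item The generator $g$ is merely continuous: there is no monotonicity in $y$ and no Lipschitz condition in $z$. So solutions are not unique, comparison is available only between minimal (or maximal) solutions, and no Cauchy estimate exists.
\end{itemize}

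Step 2 is also loose, in two ways. A Lepeltier--San Martin inf-convolution equals $-\infty$ under superlinear growth in $y$ (e.g.\ $-y^3$) or quadratic growth in $z$. And ``replacing $\bar f$ by $\bar f{\bf 1}_{\bar f\le n}$'' does not define a generator, since $\bar f$ is only a bound in \ref{A:H1}(ii).

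The missing idea is a \emph{monotone} approximation scheme. The paper proceeds as follows.
\begin{itemize}
\item It truncates in value with two indices. The terminal value $\xi$ is cut at $ne^{-\int_0^\tau\rho_r{\rm d}r}$ and $-qe^{-\int_0^\tau\rho_r{\rm d}r}$. The generator $g$ is cut at $ne^{-\int_0^t\rho_r{\rm d}r-t}$ and $-qe^{-\int_0^t\rho_r{\rm d}r-t}$. This gives globally bounded data.
\item It takes minimal bounded solutions, so comparison makes $y^{n,q}$ nondecreasing in $n$ and nonincreasing in $q$.
\item It obtains a uniform bound from \cref{pro:2.2}.
\item Only then does it localize and apply monotone stability for bounded solutions to $y=\inf_q\sup_n y^{n,q}$.
\end{itemize}

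\paragraph{Why your Step 1 rules this scheme out.}
Shifting by $e^{\int_0^t\rho_r{\rm d}r}$ produces $\bar\mu=\mu-\rho\le-\frac{\theta}{2(p-1)}\nu^2$. The bound of \cref{pro:2.2} with weight $\bar\rho\equiv0$ then requires every approximating generator to keep this negative coefficient.

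Value truncation destroys it. Where $\bar g<-qh_t$, the truncated generator equals $-qh_t$. The required inequality $-qh_t\le\bar f_t+\bar\mu_t|y|+\nu_t|z|$ then fails for large $|y|$ once $\bar\mu_t<0$. This is exactly the obstruction recorded in \cref{rmk:3.2}. Falling back on $\bar\mu=0$ would force $\bar\rho\ge\frac{\theta}{2(p-1)}\nu^2$, which means a stronger weight on $\xi$ than the theorem assumes.

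The paper normalizes the other way.
\begin{itemize}
\item When $\mu\ge0$, it does not shift at all. Then $\rho\ge0$, the truncated data are bounded, and \ref{A:H1}(ii) survives truncation.
\item When $\mu$ may be negative, it uses \eqref{eq:2.3*} and shifts by $e^{\int_0^t\mu_r{\rm d}r}$. This gives $\tilde\mu\equiv0$ and $\tilde\rho=\rho-\mu\ge\frac{\theta}{2(p-1)}\nu^2\ge0$. Via $\nu|z|\le\nu^2+|z|^2$ it also gives $|\tilde g|\le\tilde\phi_t(|y|)+|z|^2$ with deterministic $\varphi\equiv1$, so the first case applies.
\end{itemize}
This is also why quadratic growth must be dropped when $\mu$ can be negative: the shift would make the coefficient of $|z|^2$ random.

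Step 1 is a valid reformulation in itself, and Step 4 agrees with the paper.
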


\begin{rmk}\label{rmk:3.3}
Note that under the stronger assumptions \eqref{eq:1.1} and \eqref{eq:1.2} of the generator $g$ than \ref{A:H1}(ii), \citet{Li2025} and \citet{LiFan2024SD} only studied the case that $\mu_\cdot\in \R_+$, while \citet{Zhang2026} considered the more general case that $\mu_\cdot\in \R$. As demonstrated in \cite{Zhang2026}, the random variable $\xi$ belonging to $L^p_{\tau}(\rho;\R)$ can be non-integrable when both $\mu_\cdot$ and $\rho_\cdot$ are unbounded negative processes.\vspace{0.1cm}
\end{rmk}

\begin{ex}\label{ex:3.1}
We provide several examples to demonstrate that \cref{thm:3.2} cannot be encompassed by any existing conclusions.
\begin{itemize}
\item [(i)] Let $\tau$ be a finite stopping time, i.e., $\mathbb{P}(\tau<+\infty)=1$, and for each $(t, y,z)\in [0,\tau]\times \R\times\R^{d}$, let
$$
g(t,y,z):=|z|^2 (1-e^y)+|B_t||z|\sin|z|.
$$
Obviously, this $g$ satisfies \ref{A:H1} with
$$
f_t=\mu_t\equiv0,\ \ \nu_t:=|B_t|,\ \ \phi_t(x):\equiv |B_t|^2\ \ {\rm and}\ \ \varphi(x):=e^{x}+2.
$$
Then, it follows from \cref{thm:3.2} that for each $\xi\in L_\tau^p(\rho_\cdot;\R)$, BSDE \eqref{BSDE1.1} admits a solution $(y_t,z_t)_{t\in[0,\tau]}$ such that $y_\cdot\in S_\tau^p(\rho_\cdot;\R)$. Furthermore, if \eqref{eq:2.3} holds, then $z_\cdot\in M_\tau^p(\rho_\cdot;\R^{d})$.

\item [(ii)] Let $\tau$ be a stopping time taking values in the extended nonnegative real numbers and $\sigma>0$ be a constant. For each $(t, y,z)\in [0,\tau]\times \R\times\R^{d}$, let
$$g(t,y,z):=e^{-\int_0^{t\wedge\sigma} |B_s| {\rm d}s-t}-y^3 |z|^2 +|B_t|{\bf 1}_{0\leq t\leq \sigma}|y|.$$
It is evident that this $g$ satisfies \ref{A:H1} with
$$
f_t:=e^{-\int_0^{t\wedge\sigma} |B_s| {\rm d}s-t},\ \ \mu_t:=|B_t|{\bf 1}_{0\leq t\leq \sigma}, \ \ \nu_t\equiv0, \ \ \phi_t(x):=|B_t|{\bf 1}_{0\leq t\leq \sigma}x+e^{-t}\ \ {\rm and} \ \ \varphi(x):=x^3,
$$
and $\rho_t:=\mu_t$ satisfying \eqref{eq:2.3}. Then, it follows from \cref{thm:3.2} that for each $\xi\in L_\tau^p(\rho_\cdot;\R)$, BSDE \eqref{BSDE1.1} admits a weighted $L^p$ solution $(y_t,z_t)_{t\in [0,\tau]}\in H_\tau^p(\rho_\cdot;\R\times\R^{d})$.

\item [(iii)] Let $\tau$ be a bounded stopping time, i.e., $\tau\leq T$ for some constant $T>0$, and for each $(t,y,z)\in [0,\tau]\times\R\times\R^{d}$, let
$$
g(t,y,z):=-y^3 e^{|B_t|^2}-2|B_t|^3y+e^{\int_0^t |B_s|^3{\rm d}s}|B_t|^3\sin(y^2|z|).
$$
It is not hard to verify that this $g$ satisfies \ref{A:H1}(i)-(ii) and \eqref{eq:2.3*} with
$$
f_t:=e^{\int_0^t |B_s|^3{\rm d}s}|B_t|^3,\ \ \mu_t:=-2|B_t|^3,\ \ \nu_t\equiv0\ \ {\rm and}\ \ \phi_t(x):=e^{\int_0^t |B_s|^3{\rm d}s}|B_t|^3+e^{|B_t|^2}x^3+2|B_t|^3 x,
$$
and $\rho_t:=\mu_t$ satisfying \eqref{eq:2.3}. Then, it follows from \cref{thm:3.2} that for each $\xi\in L_\tau^2(\rho_\cdot;\R)$, for example $\xi:=e^{2\int_0^T |B_t|^3 {\rm d}t}$, BSDE \eqref{BSDE1.1} admits a weighted $L^2$ solution $(y_t,z_t)_{t\in [0,\tau]}\in H_\tau^2(\rho_\cdot;\R\times\R^{d})$.

\item [(iv)] For each $(t, y,z)\in [0,\tau]\times \R\times\R^{d}$, let
$$g(t,y,z):=B_t{\bf 1}_{0\leq t\leq 1}y+|B_t|{\bf 1}_{0\leq t\leq 2}|z|\cos y.$$
Obviously, this $g$ satisfies \ref{A:H1}(i)-(ii) and \eqref{eq:2.3*} with
$$
f_t\equiv0,\ \ \mu_t:=B_t{\bf 1}_{0\leq t\leq 1},\ \ \nu_t:=|B_t|{\bf 1}_{0\leq t\leq 2}\ \ {\rm and} \ \ \phi_t(x):=|B_t|{\bf 1}_{0\leq t\leq 1} x.
$$
Then, it follows from \cref{thm:3.2} that for each $\xi\in L_\tau^p(\rho_\cdot;\R)$, BSDE \eqref{BSDE1.1} admits a solution $(y_t,z_t)_{t\in[0,\tau]}$ such that $y_\cdot\in S_\tau^p(\rho_\cdot;\R)$. Furthermore, if \eqref{eq:2.3} holds, then $z_\cdot\in M_\tau^p(\rho_\cdot;\R^{d})$.\vspace{0.2cm}
\end{itemize}
\end{ex}

Let us furhter introduce the following assumptions on the generator $g$.
\begin{enumerate}
{\addtolength{\leftskip}{2em}
\item[\textbf{(H1')} \ (i)]\makeatletter\def\@currentlabel{(H1')}\makeatother\label{A:H1'} $\as$, $g(\omega,t,\cdot,\cdot)$ is continuous.

\item[(ii)] $g$ has a stochastic linear growth in $(y,z)$, i.e., and for each $(t,y,z) \in [0,\tau]\times\R\times\R^d$,
    \begin{equation}\label{eq:3.3*}
    |g(\omega,t,y,z)| \leq f_t(\omega)+\mu_t(\omega)|y|+ \nu_t(\omega)|z|.
    \end{equation}
}
\end{enumerate}

We note that \eqref{eq:3.3*} implies $\mu_\cdot\in\R_+$. Otherwise, a contradiction will arise when we let $z\equiv 0$ and send $y$ to infinity. It is obvious that \ref{A:H1'} is stronger than \ref{A:H1} plus $\mu_\cdot\in\R_+$ as well as \ref{A:H1}(i)-(ii) plus \eqref{eq:2.3*}. Consequently, the following corollary follows immediately from \cref{thm:3.2}. We omit its proof here.

\begin{cor}\label{cor:3.2}
Let the generator $g$ satisfy assumption \ref{A:H1'}. Then, for each $\xi\in L_\tau^p(\rho_\cdot;\R)$, BSDE \eqref{BSDE1.1} admits a solution $(y_t,z_t)_{t\in[0,\tau]}$ such that $y_\cdot\in S_\tau^p(\rho_\cdot;\R)$. Furthermore, if \eqref{eq:2.3} holds, then $z_\cdot\in M_\tau^p(\rho_\cdot;\R^{d})$.\vspace{0.1cm}
\end{cor}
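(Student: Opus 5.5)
The statement is \cref{cor:3.2}, and I plan to deduce it directly from \cref{thm:3.2} by checking that assumption \ref{A:H1'} puts us inside one of the two admissible cases there. It suffices to verify \ref{A:H1}(i)--(ii) together with the stronger growth condition \eqref{eq:2.3*}, since that alternative does not require the sign of $\mu_\cdot$. (Alternatively, one could verify \ref{A:H1}(iii) together with $\mu_\cdot\in\R_+$.)

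The first step is to show that $\mu_t\geq 0$ $\as$. Set $z=0$ in \eqref{eq:3.3*} to get $0\leq |g(t,y,0)|\leq f_t+\mu_t|y|$ for all $y$. On the set where $\mu_t<0$, letting $|y|\to\infty$ makes the right-hand side negative, which is a contradiction. Hence $\mu_\cdot\in\R_+$.

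Next, \ref{A:H1}(i) is immediate from \ref{A:H1'}(i). If continuity only holds $\as$, we may modify $g$ on a $\as$-null set, for instance setting $g\equiv 0$ there. This changes neither the BSDE nor the solution, and the growth bounds still hold on that set because $\mu,\nu,f\geq 0$.

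For \ref{A:H1}(ii), observe that $\sgn(y)g\leq |g|\leq f_t+\mu_t|y|+\nu_t|z|$.

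For \eqref{eq:2.3*}, define $\phi_t(\omega,x):=f_t(\omega)+\mu_t(\omega)x$. We need $\phi\in\mathbf{S}$:
\begin{itemize}
\item It is nonnegative because $f,\mu\geq0$.
\item It is continuous and increasing in $x$.
\item It is progressively measurable for each fixed $x$.
\item For the integrability $\int_0^\tau \phi_t(x)\,{\rm d}t<+\infty$, the term $\int_0^\tau\mu_t\,{\rm d}t<+\infty$ holds by standing assumption.
\end{itemize}

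The only delicate point is $\int_0^\tau f_t\,{\rm d}t<+\infty$ a.s. This follows from \eqref{eq:2.2}: it gives $\int_0^\tau e^{\int_0^t\rho_r{\rm d}r}f_t\,{\rm d}t<+\infty$ a.s., and $e^{\int_0^t\rho_r{\rm d}r}\geq e^{-\int_0^\tau|\rho_r|{\rm d}r}>0$ because $\int_0^\tau|\rho_r|{\rm d}r<+\infty$. Dividing by this positive lower bound yields $\int_0^\tau f_t\,{\rm d}t<+\infty$ a.s.

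With all hypotheses of \cref{thm:3.2} verified, both conclusions follow. For each $\xi\in L_\tau^p(\rho_\cdot;\R)$ there is a solution with $y_\cdot\in S_\tau^p(\rho_\cdot;\R)$, and $z_\cdot\in M_\tau^p(\rho_\cdot;\R^d)$ under \eqref{eq:2.3}. I expect no real obstacle; the only step needing care is the integrability of $f$ just described.
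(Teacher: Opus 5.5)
Your proposal is correct and follows the same route as the paper. The paper observes that \eqref{eq:3.3*} forces $\mu_\cdot\in\R_+$ (take $z=0$ and let $|y|\to\infty$), notes that \ref{A:H1'} then implies \ref{A:H1}(i)--(ii) together with \eqref{eq:2.3*}, and applies \cref{thm:3.2}; your choice $\phi_t(x)=f_t+\mu_t x$ and the check of $\int_0^\tau f_t\,{\rm d}t<+\infty$ via \eqref{eq:2.2} and $\int_0^\tau|\rho_t|{\rm d}t<+\infty$ simply supply the details the paper calls obvious.
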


The following corollary can be derived from \cref{thm:3.2} and \cref{cor:3.2} and it demonstrates the existence of a usual $L^p$ solution of BSDE \eqref{BSDE1.1} with unbounded stochastic coefficients.

\begin{cor}\label{cor:3.1}
Let the generator $g$ satisfy assumption \ref{A:H1} plus $\mu_\cdot\in \R_+$ (or assumption \ref{A:H1}(i)-(ii) plus \eqref{eq:2.3*} or assumption \ref{A:H1'}) with the nonnegative process $f_\cdot$ satisfying $\E\left[(\int_0^\tau f_t{\rm d}t)^{p}\right]<+\infty$ instead of \eqref{eq:2.2}. Assume further that for some constant $M>0$,
\begin{align}\label{eq:2.4}
\int_0^\tau\left(\mu_t+\frac{\theta}{2(p-1)}\nu_t^2\right)^+{\rm d}t\leq M.
\end{align}
Then, for each $\xi\in L_\tau^p(0;\R)$, BSDE \eqref{BSDE1.1} admits an adapted solution $(y_t,z_t)_{t\in[0,\tau]}$ such that $y_\cdot\in S_\tau^p(0;\R)$. Furthermore, if
\begin{align}\label{eq:2.4*}
\int_0^\tau\left(\mu_t+\frac{\theta}{2[1\wedge(p-1)]}\nu_t^2\right)^+{\rm d}t\leq M,
\end{align}
then $z_\cdot\in M_\tau^p(0;\R^{d})$.
\end{cor}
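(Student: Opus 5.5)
Corollary \ref{cor:3.1} follows directly from \cref{thm:3.2} and \cref{cor:3.2} once we pick a suitable weight. Set
$$\rho_t:=\left(\mu_t+\frac{\theta}{2(p-1)}\nu_t^2\right)^+,\qquad t\in[0,\tau].$$
This process is $(\F_t)$-progressively measurable and nonnegative. By \eqref{eq:2.4}, $\int_0^\tau|\rho_t|{\rm d}t\leq M$, and it clearly satisfies \eqref{eq:2.1}. The whole argument rests on the two-sided comparison
$$1\leq e^{\int_0^t\rho_r{\rm d}r}\leq e^M,\qquad t\in[0,\tau],$$
which makes each weighted space coincide with its unweighted counterpart, with equivalent norms.

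First we check the hypotheses of \cref{thm:3.2} (or \cref{cor:3.2}) for this $\rho_\cdot$.

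\begin{itemize}
\item Condition \eqref{eq:2.2}: $\E[(\int_0^\tau e^{\int_0^t\rho_r{\rm d}r}f_t{\rm d}t)^p]\leq e^{pM}\E[(\int_0^\tau f_t{\rm d}t)^p]<+\infty$.
\item Terminal value: for $\xi\in L^p_\tau(0;\R)$ we have $\|\xi\|_{p;\rho_\cdot}\leq e^M\|\xi\|_{p;0}<+\infty$, so $\xi\in L^p_\tau(\rho_\cdot;\R)$.
\item The structural assumptions on $g$ (\ref{A:H1} with $\mu_\cdot\in\R_+$, or \ref{A:H1}(i)-(ii) with \eqref{eq:2.3*}, or \ref{A:H1'}) do not involve $\rho_\cdot$, so they carry over unchanged.
\end{itemize}

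\cref{thm:3.2} (respectively \cref{cor:3.2}) then gives an adapted solution $(y_t,z_t)_{t\in[0,\tau]}$ with $y_\cdot\in S^p_\tau(\rho_\cdot;\R)$. Since $\rho_\cdot\geq0$, $|y_t|^p\leq e^{p\int_0^t\rho_r{\rm d}r}|y_t|^p$, so $\|y_\cdot\|_{p;0,c}\leq\|y_\cdot\|_{p;\rho_\cdot,c}<+\infty$. Hence $y_\cdot\in S^p_\tau(0;\R)$.

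For the $z$-part, assume \eqref{eq:2.4*} and choose instead
$$\tilde\rho_t:=\left(\mu_t+\frac{\theta}{2[1\wedge(p-1)]}\nu_t^2\right)^+ .$$
Because $1\wedge(p-1)\leq p-1$, we have $\tilde\rho_t\geq\mu_t+\frac{\theta}{2(p-1)}\nu_t^2$, so $\tilde\rho_\cdot$ satisfies both \eqref{eq:2.1} and \eqref{eq:2.3}. By \eqref{eq:2.4*}, $\int_0^\tau\tilde\rho_t{\rm d}t\leq M$, so the same comparison gives \eqref{eq:2.2} and $\xi\in L^p_\tau(\tilde\rho_\cdot;\R)$. \cref{thm:3.2} then yields a solution with $y_\cdot\in S^p_\tau(\tilde\rho_\cdot;\R)$ and $z_\cdot\in M^p_\tau(\tilde\rho_\cdot;\R^d)$. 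Since $\tilde\rho_\cdot\geq0$, $\|z_\cdot\|_{p;0}\leq\|z_\cdot\|_{p;\tilde\rho_\cdot}<+\infty$.

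The one point needing care is that the two choices of weight could, in principle, produce different solutions. We avoid this by using $\tilde\rho_\cdot$ alone under \eqref{eq:2.4*}: it simultaneously gives $y_\cdot\in S^p_\tau(0;\R)$ and $z_\cdot\in M^p_\tau(0;\R^d)$ for a single solution. No further obstacle is expected, because $\int_0^\tau\tilde\rho_t{\rm d}t\leq M$ makes all weights uniformly comparable to $1$.
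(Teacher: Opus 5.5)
Your proposal is correct and follows the paper's own proof. The paper also takes the weight to be the positive part $\left(\mu_\cdot+\frac{\theta}{2(p-1)}\nu_\cdot^2\right)^+$ (respectively with $1\wedge(p-1)$ in place of $p-1$), notes that its integral is bounded by $M$ so that the weighted and unweighted spaces coincide, and then invokes \cref{thm:3.2} and \cref{cor:3.2}. Your remark that under \eqref{eq:2.4*} one should use the single weight $\tilde\rho_\cdot$ to get both conclusions for the same solution is a small clarification that the paper leaves implicit.
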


\begin{proof}[\bf Proof]
Let $\rho_\cdot:=\mu_\cdot+\frac{\theta}{2 (p-1)}\nu_\cdot^2$ or $\rho_\cdot:=\mu_\cdot+\frac{\theta}{2[1\wedge(p-1)]}\nu_\cdot^2$. It then follows from \eqref{eq:2.4} or \eqref{eq:2.4*} that
$\int_0^\tau \rho^+_t {\rm d}t\leq M$. Furthermore, it is easy to see that $\xi\in L_\tau^p(\rho_\cdot^+;\R)$ and \eqref{eq:2.2} holds with $\rho^+_\cdot$ instead of $\rho_\cdot$, and that $S_\tau^p(\rho_\cdot^+;\R)=S_\tau^p(0;\R)$ and $M_\tau^p(\rho_\cdot^+;\R^{d})=M_\tau^p(0;\R^{d})$. Thus, the desired assertion follows from \cref{thm:3.2} and \cref{cor:3.2}.
\end{proof}

\begin{rmk}\label{rmk:3.1}
Note that for the case of $\mu_\cdot\in \R_+$, both \eqref{eq:2.4} and \eqref{eq:2.4*} are equivalent to
$$
\int_0^\tau\left(\mu_t+\nu_t^2\right){\rm d}t\leq M.
$$
\cref{cor:3.1} strengthens \citet[Theorem 1]{Fan2016SPL}, where the terminal time $\tau$ is a finite positive constant, $\mu_\cdot$ and $\nu_\cdot$ appearing in \ref{A:H1}(ii) and \ref{A:H1'} are two nonnegative constants and the growth condition of $g$ in $z$ (see (H3) therein) is strictly stronger than \ref{A:H1}(iii). By \citet[Remark 3]{Fan2016SPL}, \cref{cor:3.1} also generalizes \citet[Theorem 1]{Lepeltier1997} and \citet[Theorem 4.1]{Briand2007}.
\end{rmk}

\subsection{Proof of Theorem \ref{thm:3.2}}

In this subsection, we give the proof of \cref{thm:3.2} by two steps.\vspace{0.1cm}

{\bf First Step:} We prove that the conclusion of \cref{thm:3.2} holds under the conditions that $\xi\in L_\tau^p(\rho_\cdot;\R)$, the generator $g$ satisfies assumption \ref{A:H1} and $\mu_\cdot\in \R_+$.

For each $n,q \geq 1$ and $(\omega,t,y,z) \in \Omega \times [0,\tau] \times \R \times \R^d$, define $\xi^{n,q}$ and $g^{n,q}(\omega,t,y,z)$ as follows:
\begin{align*}
\begin{split}
\xi^{n,q}:&=\xi^{+} \wedge (ne^{-\int_0^\tau \rho_s {\rm d}s})-\xi^{-} \wedge (qe^{-\int_0^\tau\rho_s {\rm d}s})\\
&= \xi \wedge (ne^{-\int_0^\tau \rho_s {\rm d}s}) \vee (-qe^{-\int_0^\tau \rho_s {\rm d}s})
\end{split}
\end{align*}
and
\begin{align}\label{eq:3.112}
\begin{split}
g^{n,q}(\omega,t,y,z) :&= g^+(\omega,t,y,z) \wedge (ne^{-\int_0^t \rho_s {\rm d}s-t}) - g^-(\omega,t,y,z) \wedge (qe^{-\int_0^t \rho_s {\rm d}s-t})\\
&= g(\omega,t,y,z) \wedge (ne^{-\int_0^t \rho_s {\rm d}s-t}) \vee (-qe^{-\int_0^t \rho_s{\rm d}s-t}).
\end{split}
\end{align}
It is clear that
\begin{align}\label{eq:3.12}
|\xi^{n,q}|\leq |\xi|\wedge [(n+q)e^{-\int_0^\tau \rho_s {\rm d}s}]~~\text{and}~~|g^{n,q}|\leq |g|\wedge [(n+q)e^{-\int_0^t \rho_s {\rm d}s-t}].
\end{align}
Then, it is not very hard to check that the following statements hold true:
\begin{itemize}
\item[(i)] For each $n,q \geq 1$, $\xi^{n,q}$ is bounded by $n+q$ and $g^{n,q}$ is bounded by $(n+q)e^{-t}$;
\item[(ii)] $g^{n,q}$ is increasing in $n$ and decreasing in $q$;
\item[(iii)] All $g^{n,q}$ satisfy assumption \ref{A:H1} with the same parameters: $f_t$, $\rho_t$, $\mu_t$, $\nu_t$, $\phi_t(\cdot)$ and $\varphi(\cdot)$;
\item[(iv)] If $\lim\limits_{q\rightarrow\infty}\lim\limits_{n\rightarrow\infty} y^{n,q}=y$ and $\lim\limits_{q\rightarrow\infty}\lim\limits_{n\rightarrow\infty} z^{n,q}=z$, then $\as$,
$$
\lim\limits_{q\rightarrow\infty}\lim\limits_{n\rightarrow\infty} g^{n,q}(\omega,t,y^{n,q},z^{n,q})=g(\omega,t,y,z).
$$
\end{itemize}
In light of (i), according to in \citet[Lemma 3.4 and Theorem 3.3]{Fan2016SPA}, the following BSDE $(\xi^{n,q}, \tau, g^{n,q})$ admits a minimal bounded solution $(y_{t}^{n,q},z_{t}^{n,q})_{t\in[0,\tau]}$ in the space of $S^\infty_\tau(0;\R)\times M_\tau^2(0;\R^d)$\footnote{It can be verified that Lemma 3.4 and Theorem 3.3 in \citet{Fan2016SPA} remain valid when the constant terminal time $T$ taking values in the extended nonnegative real numbers is replaced with the general random terminal time $\tau$ taking values in the extended nonnegative real numbers.}:
\begin{align}\label{BSDE2.2}
y_t^{n,q}=\xi^{n,q}+\int_t^\tau g^{n,q}(s,y_s^{n,q},z_s^{n,q}){\rm d}s-\int_t^\tau z_s^{n,q}\cdot{\rm d}B_s, \ \ t\in[0,\tau].
\end{align}
And, in light of (ii), $(y_t^{n,q})_{t\in[0,\tau]}$ is nondecreasing in $n$ and non-increasing in $q$. Furthermore, it follows from \eqref{eq:3.12} and \eqref{BSDE2.2} that for each $n,q\geq 1$ and $t\geq 0$,
\begin{align}\label{eq:3.12*}
|y_{t\wedge\tau}^{n,q}|e^{\int_0^{t\wedge\tau}\rho_s{\rm d}s}&=|\E\left[y_{t\wedge\tau}^{n,q}|\F_{t\wedge\tau}\right]|e^{\int_0^{t\wedge\tau}\rho_s{\rm d}s}\leq\E\left[|\xi^{n,q}|+\int_{t\wedge\tau}^\tau |g^{n,q}(s,y_s^{n,q},z_s^{n,q})|{\rm d}s\bigg|\F_{t\wedge\tau}\right]e^{\int_0^{t\wedge\tau}\rho_s{\rm d}s}\nonumber\\
&\leq \E\left[e^{\int_0^\tau \rho_s{\rm d}s}|\xi^{n,q}|\bigg|\F_{t\wedge\tau}\right]+\E\left[\int_{t\wedge\tau}^\tau e^{\int_0^s \rho_r{\rm d}r}|g^{n,q}(s,y_s^{n,q},z_s^{n,q})|{\rm d}s\bigg|\F_{t\wedge\tau}\right]\nonumber\\
&\leq (n+q)+\E\left[\int_0^\tau (n+q)e^{-s}{\rm d}s\bigg|\F_{t\wedge\tau}\right]\leq2(n+q).
\end{align}
Hence, $y_\cdot^{n,q}\in S_\tau^p(\rho_\cdot;\R)$. In light of (iii), we know that $g^{n,q}$ satisfies \ref{A:A} with $\bar{\mu}_t:=\mu_t$, $\bar{\nu}_t:=\nu_t$, $\bar{\rho}_t:=\rho_t$ and $\bar{f}_t:=f_t$. It then follows from \cref{pro:2.2} that there exists a constant $C_{p,\theta}>0$ depending only on $p$ and $\theta$ such that for each $n,q\geq1$ and $t\geq0$,
\begin{align}\label{eq:3.13}
\begin{split}
|y_{t}^{n,q}|^p &\leq \E\left[\sup_{s\in[t\wedge\tau,\tau]}\left(e^{p\int_{0}^{s}\rho_r{\rm d}r}|y_s^{n,q}|^{p}\right)\bigg|\F_{t}\right]\\
&\leq
C_{p,\theta}\E\left[e^{p\int_{0}^{\tau}\rho_r{\rm d}r}|\xi|^{p}+\left(
\int_{0}^{\tau}e^{ \int_{0}^{s}\rho_r{\rm d}r}f_s{\rm d}s\right)^{p}\bigg|\F_{t}\right]=:|M_{t}|^p.
\end{split}
\end{align}

Now, for each pair of integers $m,l \geq 1$, we define the following two stopping times:
$$ \tau_m = \inf\left\{t\geq0 : |M_t| \geq m\right\} \wedge \tau $$
and
$$
\sigma_{m,l} = \inf\left\{t\geq0 : \int_0^t \phi_s(m){\rm d}s \geq l\right\} \wedge \tau_m\vspace{0.1cm}
$$
with the convention that $\inf \emptyset=+\infty$. Then $(y_{m,l}^{n,q}(t), z_{m,l}^{n,q}(t))_{t\in[0,\tau]}:= (y_{t\wedge\sigma_{m,l}}^{n,q}, z_t^{n,q}1_{t \leq \sigma_{m,l}})_{t\in[0,\tau]}$ is a solution in $S^\infty_\tau(0;\R)\times M_\tau^2(0;\R^d)$ to the following BSDE:
$$y_{m,l}^{n,q}(t) = y_{\sigma_{m,l}}^{n,q} + \int_t^\tau 1_{s \leq \sigma_{m,l}} g^{n,q}(s,y_{m,l}^{n,q}(s),z_{m,l}^{n,q}(s)){\rm d}s - \int_t^\tau z_{m,l}^{n,q}(s)\cdot{\rm d}B_s, \ \ t\in[0,\tau].$$
Note that for each fixed $m,l\geq1$, $(y_{m,l}^{n,q}(t))_{t\in[0,\tau]}$ is nondecreasing in $n$ and non-increasing in $q$, and that $\as$, $(g^{n,q})_{n,q}$ converges locally uniformly in $(y,z)$ to $g$ as $n,q\rightarrow \infty$. By \eqref{eq:3.13} along with the definitions of $\tau_m$ and $\sigma_{m,l}$ we can obtain that
$$
\as,\ \ \ \sup_{n,q\geq 1}|y_{m,l}^{n,q}(t)| \leq m.
$$
Furthermore, since $|g^{n,q}|\leq |g|$ for each $n,q\geq1$ and $g$ satisfies assumption \ref{A:H1}(iii), we know that for each $(s,y,z)\in[0,\tau]\times[-m,m]\times\R^d,$
\begin{align*}
\sup_{n,q\geq 1}(1_{s\leq \sigma_{m,l}}|g^{n,q}(s,y,z)|)\leq 1_{s\leq \sigma_{m,l}}\phi_s(m)+\varphi(m)|z|^2.
\end{align*}
By the definition of $\sigma_{m,l}$, we know that $\int_0^\tau 1_{s\leq \sigma_{m,l}}\phi_s(m) {\rm d}s\leq l$. Thus, for each $m,l\geq 1$, we can apply the stability result for the bounded solutions of BSDEs, see \citet[Proposition 3.1]{LuoFan2018}\footnote{It can be verified that Proposition 3.1 in \citet{LuoFan2018} remains valid when the constant terminal time $T$ taking values in the extended nonnegative real numbers is replaced with the random terminal time $\tau$ taking values in the extended nonnegative real numbers.}. Set
$$
y_{m,l}(t):=\inf_{q\geq1}\sup_{n\geq1} y_{t\wedge \sigma_{m,l}}^{n,q}.
$$
Then $(y_{m,l}(t))_{t\in[0,\tau]}$ is continuous and the sequence of processes $(z_t^{n,q}1_{t\leq \sigma_{m,l}})_{t\in[0,\tau]}$ converges to $(z_{m,l}(t))_{t\in[0,\tau]}$ strongly in $M_\tau^2(0;\R^d)$ as $n,q\rightarrow \infty$ such that
$$
y_{m,l}(t)=\inf_{q\geq1}\sup_{n\geq1} y_{\sigma_{m,l}}^{n,q}+ \int_t^\tau 1_{s\leq \sigma_{m,l}} g(s,y_{m,l}(s),z_{m,l}(s)){\rm d}s- \int_t^\tau z_{m,l}(s)\cdot{\rm d}B_s, \ \ t \in [0,\tau].
$$
In light of the last equation and the stability of stopping times $\tau_m$ and $\sigma_{m,l}$, since for each $m,l\geq 1$ and $t\in[0,\tau]$, we have
$$
y_{m+1,l+1}(t\wedge\sigma_{m,l})=y_{m,l+1}(t\wedge\sigma_{m,l})=y_{m,l}(t\wedge\sigma_{m,l})= \inf_{q\geq1}\sup_{n\geq1} y_{t\wedge \sigma_{m,l}}^{n,q}
$$
and
$$
z_{m+1,l+1}(t)1_{t\leq \sigma_{m,l}}=z_{m,l+1}(t)1_{t\leq \sigma_{m,l}}=z_{m,l}(t)1_{t\leq \sigma_{m,l}}= \lim\limits_{n,q\rightarrow\infty}z_t^{n,q}(t)1_{t\leq \sigma_{m,l}},\vspace{0.1cm}
$$
it can be concluded that $(y_t,z_t)_{t\in[0,\tau]}$ is an adapted solution of BSDE$(\xi,\tau,g)$, where
\begin{align*}
y_t:=\inf_{q\geq1}\sup_{n\geq1} y_t^{n,q}~~ \text{and}~~ z_t:= \sum_{m=1}^{+\infty}\left( \sum_{l=1}^{+\infty}z_{m,l}(t)1_{t\in[\sigma_{m,l-1},\sigma_{m,l})}\right)1_{t\in[\tau_{m-1},\tau_m)}, \ t\in[0,\tau]
\end{align*}
with $\tau_0:=1$ and $\sigma_{m,0}:=0$ for each $m\geq 1$. Furthermore, by sending $n,q\rightarrow \infty$ in \eqref{eq:3.13} with $t=0$ and using Fatou's lemma, we can deduce that $(y_t)_{t\in[0,\tau]}\in S_\tau^p(\rho_\cdot;\R)$. Finally, if \eqref{eq:2.3} holds, then the desired assertion that $(z_t)_{t\in[0,\tau]}\in M_\tau^p(\rho_\cdot;\R^d)$ follows from \cref{pro:2.3}.\vspace{0.2cm}

{\bf Second Step:} By using the exponential shift transformation and the conclusion obtained in the first step, we prove that the conclusion of \cref{thm:3.2} holds under the assumption \ref{A:H1} with \eqref{eq:2.3*} instead of \ref{A:H1}(iii) and the condition of $\xi\in L_\tau^p(\rho_\cdot;\R)$. First, we let
\begin{align}\label{eq:3.61}
\tilde{\rho}_\cdot:=\rho_\cdot-\mu_\cdot.
\end{align}
Then, by \eqref{eq:2.1} we know that
$\int_0^\tau \tilde{\rho}_t{\rm d}t<+\infty$ and
\begin{align}\label{eq:3.62}
\tilde{\rho}_t\geq \frac{\theta}{2(p-1)}\nu_t^2, \ \ t\in[0,\tau].
\end{align}
Note that $(y_t,z_t)_{t\in[0,\tau]}$ is an adapted solution of BSDE \eqref{BSDE1.1} in the space of $H_\tau^p(\rho_\cdot;\R\times\R^{d})$ if and only if
$$
(\tilde{y}_t,\tilde{z}_t)_{t\in[0,\tau]}:=(e^{\int_0^t\mu_r{\rm d}r}y_t,e^{\int_0^t\mu_r{\rm d}r}z_t)_{t\in[0,\tau]}.
$$
is an adapted solution of the following BSDE$(\tilde{\xi},\tau,\tilde{g})$ in the space of $H_\tau^p(\tilde{\rho}_\cdot;\R\times\R^{d})$:
\begin{align}\label{BSDE2.3}
\tilde{y}_t=\tilde{\xi}+\int_t^\tau \tilde{g}(s,\tilde{y}_s,\tilde{z}_s){\rm d}s-\int_t^\tau \tilde{z}_t\cdot{\rm d}B_s, \ \ t\in[0,\tau],
\end{align}
where $\tilde{\xi}:=e^{\int_0^\tau\mu_r{\rm d}r}\xi$ and
\begin{align}\label{eq:3.7}
\tilde{g}(t,y,z):=e^{\int_0^t\mu_r{\rm d}r}g(t,e^{-\int_0^t\mu_r{\rm d}r}y,e^{-\int_0^t\mu_r{\rm d}r}z)-\mu_t y,\ \ (t,y,z)\in [0,\tau]\times \R\times \R^d.
\end{align}
Consequently, it suffices to verify that BSDE \eqref{BSDE2.3} admits a solution $(\tilde{y}_t,\tilde{z}_t)_{t\in[0,\tau]}$ such that $\tilde{y}_\cdot\in S_\tau^p(\tilde{\rho}_\cdot;\R)$, and if \eqref{eq:2.3} holds, then $\tilde{z}_\cdot\in M_\tau^p(\tilde{\rho}_\cdot;\R^{d})$. Indeed, since $\xi\in L_\tau^p(\rho_\cdot;\R)$ and $\tilde{\xi}:=e^{\int_0^\tau\mu_r{\rm d}r}\xi$, it follows from \eqref{eq:3.61} that $\tilde{\xi}\in L_\tau^p(\tilde{\rho}_\cdot;\R)$. Since \ref{A:H1}(i) holds for $g$, it is not difficult to verify that the generator $\tilde{g}$ defined in \eqref{eq:3.7} also satisfies \ref{A:H1}(i). Moreover, by virtue of \eqref{eq:3.7}, \ref{A:H1}(ii) and \eqref{eq:2.3*}, we deduce that for each $(t,y,z) \in[0,\tau]\times\R\times\R^d$,
\begin{align*}
\sgn(y)\tilde{g}(\omega,t,y,z) \leq \tilde{f}_t(\omega)+\nu_t(\omega)|z|\vspace{-0.2cm}
\end{align*}
and\vspace{-0.2cm}
\begin{align*}
|\tilde{g}(\omega,t,y,z)| \leq e^{\int_0^t\mu_r{\rm d}r} \phi_t(e^{-\int_0^t\mu_r{\rm d}r}|y|)+|\mu_t||y|+\nu_t|z| \leq \tilde{\phi}_t(\omega,|y|)+|z|^2,
\end{align*}
where
$$
\tilde{f}_t:=e^{\int_0^t\mu_r{\rm d}r}f_t\ \  {\rm and}\ \  \tilde{\phi}_t(x):=e^{\int_0^t\mu_r{\rm d}r} \phi_t(e^{-\int_0^t\mu_r{\rm d}r}x)+|\mu_t|x+\nu_t^2.\vspace{0.2cm}
$$
Since $\phi_t(x)\in\mathbf{S}$ and $\int_0^\tau (|\mu_t|+\nu_t^2){\rm d}t<+\infty$, it follows from \eqref{eq:3.61} and \eqref{eq:2.2} that $\tilde{\phi}_t(x)\in\mathbf{S}$ and
\begin{align*}
\E\left[\left(\int_0^\tau e^{\int_{0}^{t}\tilde{\rho}_r{\rm d}r}\tilde{f}_t{\rm d}t\right)^p\right]=\E\left[\left(\int_0^\tau e^{\int_{0}^{t}\rho_r{\rm d}r} f_t{\rm d}t\right)^p\right]<+\infty.
\end{align*}
So, the generator $\tilde{g}$ of BSDE \eqref{BSDE2.3} satisfies \ref{A:H1}(ii)-(iii) with $\mu_t\equiv0$ and $\varphi(\cdot)\equiv1$, and with $\tilde{\rho}_t$, $\tilde{\phi}_t(x)$ and $\tilde{f}_t$ instead of $\rho_t$, $\phi_t(x)$ and $f_t$, respectively. Thus, in light of \eqref{eq:3.62}, the desired assertion can be derived from the conclusion obtained the first step. The proof of \cref{thm:3.2} is then completed.

\begin{rmk}\label{rmk:3.2}
Note that the method used to handle the case of $\mu_\cdot\in \R_+$ in the first step can not be applied to the proof in the second step. In fact, since the process $\mu_\cdot$ takes values in $\R$, the truncated function $g^{n,q}$ defined in \eqref{eq:3.112} does not satisfy \ref{A:H1}(ii) in general, and then the uniform estimate with respect to the process $y_\cdot^{n,q}$ in \eqref{eq:3.13} can not be obtained via \cref{pro:2.2}. Instead, we use the exponential shift transformation to transform BSDE \eqref{BSDE1.1} into BSDE \eqref{BSDE2.3}, where the generator $\tilde g$ satisfies \ref{A:H1} with $\mu_\cdot\equiv0$, thereby completing the proof of the second step via the conclusion of the first step.
\end{rmk}

\section{An existence and uniqueness result}
\label{sec:4-An existence and uniqueness result}
\setcounter{equation}{0}

In this section, we will establish an existence and uniqueness theorem and a comparison theorem for the adapted solution of BSDE \eqref{BSDE1.1} in a general weighted $L^p$ space with the same weighting factor $e^{\int_0^t \rho_r {\rm d}r}$ as used in Section 3. The generator $g$ satisfies a stochastic monotonicity condition with a general growth in the state variable $y$ and a stochastic Lipschitz continuity condition in the state variable $z$.

\subsection{Statement of the main results}
First, let us introduce the following assumptions on the generator $g$.
\begin{enumerate}
{\addtolength{\leftskip}{2em}\item[\textbf{(H2)} \ (i)]\makeatletter\def\@currentlabel{(H2)}\makeatother\label{A:H2} $\E\left[\left(\int_0^\tau e^{\int_{0}^{s}\rho_r{\rm d}r}|g(s,0,0)|{\rm d}s\right)^p\right]<+\infty.$

\item[(ii)] For each $(t,z)\in [0,\tau]\times\R^{d}$, $g(\omega,t,\cdot,z)$ is continuous.

\item[(iii)] $g$ has a general growth in $y$, i.e., for each $x\in \R_+$, it holds that
\begin{align}\label{eq:4.0}
\int_0^\tau \psi_t(x){\rm d}t<+\infty
\end{align}
with
\begin{align}\label{eq:4.8}
\psi_t(x):=\sup_{|y|\leq x} \left|g(t,y,0)-g(t,0,0)\right|, \ \ (t,x)\in [0,\tau]\times \R_+.
\end{align}

\item[(iv)] $g$ satisfies a stochastic monotonicity condition in $y$, i.e., for each $(t,y_1,y_2,z)\in [0,\tau]\times\R\times\R\times\R^{d}$,
$$
\sgn(y_1-y_2)(g(\omega,t,y_1,z)-g(\omega,t,y_2,z))\leq \mu_t(\omega)|y_1-y_2|.
$$

\item[(v)] $g$ satisfies a stochastic Lipschitz condition in $z$, i.e.,  for each $(t,y,z_1,z_2)\in [0,\tau]\times\R\times\R^{d}\times\R^{d}$,
$$\left|g(\omega,t,y,z_1)-g(\omega,t,y,z_2)\right|\leq \nu_t(\omega)|z_1-z_2|.$$}
\end{enumerate}

\begin{rmk}\label{rmk:4.1}
Note that \ref{A:H2}(ii) and \ref{A:H2}(iv)-(v) come from \citet{Zhang2026}, and \ref{A:H2}(i) generalizes the corresponding assumption in \citet{Zhang2026} (see (H1) therein) by broadening the scope of the process $\rho_\cdot$ to be only bigger than $\mu_\cdot +\frac{\theta}{2(p-1)}\nu_\cdot^2$. In addition, \ref{A:H2}(iii) can be compared with the general growth condition used in \citet{Zhang2026} (see (H3) therein), which is presented as follows: there exists an $(\F_t)$-progressively measurable real-valued process $(\alpha_t)_{t\in[0,\tau]}$ satisfying $\essinf\limits_{t\in[0,\tau]}\alpha_t>0$ such that
\begin{align}\label{eq:4.6}
\forall x\in\R_+, \ \ \ \E\left[\int_0^\tau \alpha_t\psi_t(x\alpha_t){\rm d}t\right]<+\infty,
\end{align}
where the function $\psi_t(\cdot)$ is defined in \eqref{eq:4.8}. Obviously, \eqref{eq:4.6} can imply that $\mathbb{P}-a.s.,$ for each $x\in\R_+$,
\begin{align}\label{eq:4.7}
\int_0^\tau \alpha_t\psi_t(x\alpha_t){\rm d}t<+\infty.
\end{align}
Furthermore, it is not difficult to verify that \eqref{eq:4.7} is equivalent to \eqref{eq:4.0}. In fact, it is evident that if the function $\psi_t(\cdot)$ satisfies \eqref{eq:4.0}, then it also satisfies \eqref{eq:4.7} by taking $\alpha_t\equiv1$. On the other hand, if the function $\psi_t(\cdot)$ satisfies \eqref{eq:4.7}, in light of $\essinf\limits_{t\in[0,\tau]}\alpha_t>0$, we deduce that $\mathbb{P}-a.s.,$ for each $x\in\R_+$,
\begin{align*}
\int_0^\tau \psi_t(x){\rm d}t=\int_0^\tau \frac{1}{\alpha_t}\alpha_t\psi_t(x \frac{1}{\alpha_t}\alpha_t){\rm d}t\leq \frac{1}{\essinf\limits_{t\in[0,\tau]}\alpha_t} \int_0^\tau \alpha_t\psi_t(x'\alpha_t){\rm d}t<+\infty
\end{align*}
with
$$x':=\frac{x}{\essinf\limits_{t\in[0,\tau]}\alpha_t}\in\R_+,$$
and then \eqref{eq:4.0} holds for the function $\psi_t(\cdot)$. Therefore, \ref{A:H2}(iii) is weaker than \eqref{eq:4.6}. However, up to now we do not know whether \ref{A:H2}(iii) is strictly weaker than \eqref{eq:4.6}.
\end{rmk}

The following theorem establishes an existence and uniqueness result for the adapted solution of BSDE \eqref{BSDE1.1} in a general weighted $L^p$ space, which is the first main result in this section.

\begin{thm}\label{thm:4.2}
Let the generator $g$ satisfy \ref{A:H2}. Then, for each $\xi\in L_\tau^p(\rho_\cdot;\R)$, BSDE \eqref{BSDE1.1} admits a unique solution $(y_t,z_t)_{t\in[0,\tau]}$ such that $y_\cdot\in S_\tau^p(\rho_\cdot;\R)$. Furthermore, if \eqref{eq:2.3} holds, then $z_\cdot\in M_\tau^p(\rho_\cdot;\R^{d})$.
\end{thm}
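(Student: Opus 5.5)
The plan is to obtain existence from \cref{thm:3.2}, through its second alternative (the stochastic linear growth \eqref{eq:2.3*} in $z$), which places no sign restriction on $\mu_\cdot$. Uniqueness will come from applying the a priori estimate \cref{pro:2.2} to the difference of two solutions, with zero terminal value and zero free term.

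\emph{Existence.} Fix $\xi\in L^p_\tau(\rho_\cdot;\R)$ and set $f_t:=|g(t,0,0)|$. By \ref{A:H2}(i), $f_\cdot$ satisfies \eqref{eq:2.2}. Since $e^{\int_0^t\rho_r{\rm d}r}$ is continuous and positive on $[0,\tau]$ (including the limit at $+\infty$, because $\int_0^\tau|\rho_r|{\rm d}r<+\infty$), it is bounded below pathwise, so also $\int_0^\tau f_t{\rm d}t<+\infty$ a.s. I then check \ref{A:H1}(i)--(ii) and \eqref{eq:2.3*} in turn.
\begin{itemize}
\item Joint continuity \ref{A:H1}(i) follows from continuity in $y$ for each fixed $z$ (\ref{A:H2}(ii)) combined with the $z$-Lipschitz property \ref{A:H2}(v), which is uniform in $y$.
\item For \ref{A:H1}(ii), write $\sgn(y)g(t,y,z)\le \sgn(y)\big(g(t,y,z)-g(t,0,z)\big)+|g(t,0,z)-g(t,0,0)|+|g(t,0,0)|$. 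By \ref{A:H2}(iv) with $y_2=0$ and \ref{A:H2}(v), this is at most $\mu_t|y|+\nu_t|z|+f_t$. The convention $\sgn(0)=-1$ is harmless here, because when $y=0$ the first term vanishes.
\item For \eqref{eq:2.3*}, we have $|g(t,y,z)|\le|g(t,y,0)-g(t,0,0)|+|g(t,0,0)|+\nu_t|z|\le\phi_t(|y|)+\nu_t|z|$ with $\phi_t(x):=\psi_t(x)+|g(t,0,0)|$.
\end{itemize}
To see that $\phi\in\mathbf{S}$, note first that $\psi_t$ is increasing by definition. It is continuous in $x$ because $y\mapsto g(t,y,0)$ is continuous, so it is uniformly continuous on compacts and the supremum over $[-x,x]$ depends continuously on $x$. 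Integrability of $\phi_t(x)$ in $t$ is \ref{A:H2}(iii) together with the integrability of $f$ shown above. Progressive measurability of $\psi_t(x)$ follows by taking the supremum over rational $y$, using continuity in $y$. \cref{thm:3.2} then yields a solution with $y_\cdot\in S^p_\tau(\rho_\cdot;\R)$, and with $z_\cdot\in M^p_\tau(\rho_\cdot;\R^d)$ when \eqref{eq:2.3} holds.

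\emph{Uniqueness.} Let $(y^i,z^i)$, $i=1,2$, be two solutions with $y^i\in S^p_\tau(\rho_\cdot;\R)$, and set $\delta y:=y^1-y^2$ and $\delta z:=z^1-z^2$. Then $(\delta y,\delta z)$ is an adapted solution of BSDE$(0,\tau,\hat g)$ with $\hat g(t,y,z):=g(t,y+y^2_t,z+z^2_t)-g(t,y^2_t,z^2_t)$. The required integrability $\int_0^\tau(|\hat g(t,\delta y_t,\delta z_t)|+|\delta z_t|^2){\rm d}t<+\infty$ is inherited from the two solutions. By \ref{A:H2}(iv)--(v),
$$\sgn(y)\hat g(t,y,z)\le \sgn(y)\big(g(t,y+y^2_t,z+z^2_t)-g(t,y^2_t,z+z^2_t)\big)+\nu_t|z|\le \mu_t|y|+\nu_t|z|.$$
So $\hat g$ satisfies \ref{A:A} with $\bar f\equiv0$, $\bar\mu=\mu$, $\bar\nu=\nu$ and $\bar\rho=\rho$, and \eqref{eq:2.1} is exactly the hypothesis of \cref{pro:2.2}. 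Since $\delta y\in S^p_\tau(\rho_\cdot;\R)$, \cref{pro:2.2} with $r=t=0$ gives $\E[\sup_s e^{p\int_0^s\rho_r{\rm d}r}|\delta y_s|^p]\le 0$, hence $\delta y\equiv0$. The equation then forces $\int_0^t\delta z_s\cdot{\rm d}B_s$ to be a continuous process of finite variation, so its quadratic variation $\int_0^t|\delta z_s|^2{\rm d}s$ vanishes, and $\delta z=0$ $\as$

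The only delicate point I anticipate is the verification that $\phi\in\mathbf{S}$ (continuity and measurability of $\psi$). The uniqueness step itself is a direct application of \cref{pro:2.2}, which is precisely where the weaker condition \eqref{eq:1.7} suffices, since no bound on $z$ is needed.
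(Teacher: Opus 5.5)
Your proof is correct. The existence half is exactly the paper's argument. You show that \ref{A:H2} implies \ref{A:H1}(i)--(ii) and \eqref{eq:2.3*} with $f_t=|g(t,0,0)|$ and $\phi_t(x)=|g(t,0,0)|+\psi_t(x)$, and then apply \cref{thm:3.2} through its second alternative. Your verification that $\phi\in\mathbf{S}$ fills in what the paper only asserts: continuity and measurability of $\psi_t$, and pathwise integrability of $|g(t,0,0)|$ from the lower bound $e^{\int_0^t\rho_r{\rm d}r}\geq e^{-\int_0^\tau|\rho_r|{\rm d}r}$.

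For uniqueness you take a different route from the paper. The paper deduces uniqueness from the comparison theorem \cref{thm:4.1}, applied in both directions. That proof works with $U^+e^{\int_0^\cdot\mu_r{\rm d}r}$, removes the $\nu_s|V_s|$ term by a Girsanov change of measure, and controls the resulting densities by H\"older's inequality with the exponent $q$ from \eqref{eq:4.12}. You instead write the difference as a solution of BSDE$(0,\tau,\hat g)$, where $\hat g$ satisfies \ref{A:A} with $\bar f\equiv0$, $\bar\mu=\mu$, $\bar\nu=\nu$, $\bar\rho=\rho$. You then read off $\delta y\equiv0$ from \cref{pro:2.2} and get $\delta z=0$ from the vanishing quadratic variation.

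Your route is shorter, needs no change of measure, and reuses the estimate that already underlies the existence proof. The paper itself notes in \cref{rmk:4.2} that uniqueness can be obtained this way. What the paper's route buys is the comparison theorem itself. It requires only $(Y-Y')^+\in S^p_\tau(\rho_\cdot;\R)$, and it is needed later for the monotonicity of the approximating sequences and for minimality in \cref{thm:5.1,thm:5.2}.
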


\begin{rmk}\label{rmk:2.2}
Regarding \cref{thm:4.2}, we make the following several remarks.
\begin{itemize}
\item [(i)] Based on \cref{rmk:4.1}, we know that \cref{thm:4.2} improves the one-dimensional case of Theorem 3.2 in \citet{Zhang2026}, where \ref{A:H2}(iii) is replaced with the stronger condition \eqref{eq:4.6}.

\item [(ii)] For the case of $\mu_\cdot+\frac{\theta}{2(p-1)}\nu_\cdot^2\leq\rho_\cdot< \mu_\cdot+\frac{\theta}{2[1\wedge(p-1)]}\nu_\cdot^2$, \cref{thm:4.2} is established for the first time and it further strengthens those existing studies on BSDEs with stochastic monotonicity generators, including the one-dimensional case of \cite[Theorem 3.2]{Zhang2026}, \cite[Theorem 3.1]{LiFan2024SD} and
    \cite[Theorem 3.1]{Li2025}.
\item [(iii)] Using an identical analysis to \cref{cor:3.1}, by virtue of \cref{thm:4.2} we can derive an existence and uniqueness result of a usual $L^p$ solution of BSDE \eqref{BSDE1.1} with unbounded stochastic coefficients under assumption \ref{A:H2} provided that \eqref{eq:2.4} and \eqref{eq:2.4*} are in force and $\E\left[\left(\int_0^\tau |g(s,0,0)|{\rm d}s\right)^p\right]<+\infty$.
\end{itemize}
\end{rmk}

\begin{ex}\label{ex:4.1}
We would like to present two examples where \cref{thm:4.2} is applicable, but \citet[Theorem 3.2]{Zhang2026} and any existing results are not applicable. In the following two examples, we always assume that $p>2$, $1<\theta\leq p-1$ and
\begin{align}\label{eq:4.5}
\rho_\cdot:=\mu_\cdot+\frac{\theta}{2(p-1)}\nu_\cdot^2,
\end{align}
which satisfies \eqref{eq:2.1}. However, since $p>2$ and $1<\theta\leq p-1$, this $\rho_\cdot$ does not satisfy \eqref{eq:2.3}.
\begin{itemize}
\item [(i)] Let $\sigma>0$ be a constant. For each $(t, y,z)\in [0,\tau]\times \R\times\R^{d}$, let
$$g(t,y,z):=e^{-|B_t|^3{\bf 1}_{0\leq t\leq \sigma}y}+(|B_t|^2{\bf 1}_{0\leq t\leq \sigma}+e^{-t})|z|-1.$$
It is easy to verify that this generator $g$ satisfies assumption \ref{A:H2} with
$$
g(t,0,0)\equiv0, \ \ \mu_t\equiv 0 \ \ \text{and} \ \ \nu_t:=|B_t|^2{\bf 1}_{0\leq t\leq \sigma}+e^{-t}, \ \ t\in[0,\tau],
$$
and with $\rho_t:=\frac{\theta}{2(p-1)}(|B_t|^4{\bf 1}_{0\leq t\leq \sigma}+e^{-t})$ satisfying \eqref{eq:2.1} but not \eqref{eq:2.3}.
Then, it follows from \cref{thm:4.2} that for each $\xi\in L_\tau^p(\rho_\cdot;\R)$, for example $\xi:=e^{-\frac{\theta}{2(p-1)}\int_0^\sigma |B_t|^4 {\rm d}t}$, BSDE \eqref{BSDE1.1} admits a unique solution $(y_t,z_t)_{t\in[0,\tau]}$ such that $(y_t)_{t\in[0,\tau]}\in S_\tau^p(\rho_\cdot;\R)$.

\item [(ii)] Let $p=3$ and $\theta=2$, and for each $(t, y,z)\in [0,\tau]\times \R\times\R^{d}$, let
$$g(t,y,z):=B_t{\bf 1}_{0\leq t\leq 1}(y+\sin|z|).$$
Obviously, this generator $g$ satisfies assumption \ref{A:H2} with
$$
g(t,0,0)\equiv0, \ \ \mu_t:=B_t{\bf 1}_{0\leq t\leq 1}, \ \ \nu_t:=|B_t|{\bf 1}_{0\leq t\leq 1}, \ \ t\in[0,\tau],
$$
and $\rho_t:=(B_t+\frac{1}{2}|B_t|^2){\bf 1}_{0\leq t\leq 1}$ satisfying \eqref{eq:2.1} but not \eqref{eq:2.3}.
Then, by \cref{thm:4.2} we know that for each $\xi\in L_\tau^3(\rho_\cdot;\R)$, BSDE \eqref{BSDE1.1} admits a unique solution $(y_t,z_t)_{t\in[0,\tau]}$ such that $y_\cdot\in S_\tau^3(\rho_\cdot;\R)$.\vspace{0.2cm}
\end{itemize}
\end{ex}

The following comparison theorem is the second main result of this section. It improves Theorem 5.1 in \citet{LiFan2024SD} where the weighting factor is $e^{\int_0^t (\beta\mu_s^+ +\frac{\theta}{2[1\wedge(p-1)]} \nu_s^2){\rm d}s}$ for some constant $\beta\geq1$, and the uniqueness part of \cref{thm:4.2} is a direct consequence of this comparison theorem.

\begin{thm}\label{thm:4.1}
Assume that $\xi$ and $\xi'$ are two terminal values, $g$ and $g^{\prime}$ are two generators, and $(Y_\cdot, Z_\cdot)$ and $\left(Y_\cdot^{\prime}, Z_\cdot^{\prime}\right)$ are, respectively, solutions of BSDE$(\xi,\tau,g)$ and BSDE$\left(\xi^{\prime}, \tau, g^{\prime}\right)$. If $(Y_\cdot-Y_\cdot^{\prime})^+\in S_\tau^p(\rho_\cdot;\R)$ and $\xi \leq \xi^{\prime}$, and either of the following conditions is satisfied:\vspace{0.2cm}

(i) $g$ satisfies \ref{A:H2}(iv)-(v), and $g\left(t, Y_{t}^{\prime}, Z_{t}^{\prime}\right) \leq g^{\prime}\left(t, Y_{t}^{\prime}, Z_{t}^{\prime}\right), \ \ t\in[0,\tau];$\vspace{0.1cm}

(ii) $g^{\prime}$ satisfies \ref{A:H2}(iv)-(v), and $g\left(t, Y_{t}, Z_{t}\right) \leq g^{\prime}\left(t, Y_{t}, Z_{t}\right), \ \ t\in[0,\tau],$\vspace{0.2cm}

\noindent then for each $t \in[0,\tau]$, we have $Y_{t} \leq Y_{t}^{\prime}.$
\end{thm}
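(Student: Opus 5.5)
We will reduce the comparison to the a priori estimate of \cref{pro:2.2}, applied to the positive part of the difference of the two solutions. Consider case (i); case (ii) is symmetric. Set $\hat Y_\cdot:=Y_\cdot-Y'_\cdot$, $\hat Z_\cdot:=Z_\cdot-Z'_\cdot$ and $\hat\xi:=\xi-\xi'\leq 0$. Then $(\hat Y_\cdot,\hat Z_\cdot)$ solves the BSDE with terminal value $\hat\xi$ and generator
$$\hat g(t,y,z):=g(t,y+Y'_t,z+Z'_t)-g'(t,Y'_t,Z'_t).$$
Using \ref{A:H2}(iv)--(v) for $g$ together with $g(t,Y'_t,Z'_t)\le g'(t,Y'_t,Z'_t)$, we get for $y>0$
$$\hat g(t,y,z)\le \mu_t y+\nu_t|z|+\big(g(t,Y'_t,Z'_t)-g'(t,Y'_t,Z'_t)\big)\le \mu_t|y|+\nu_t|z|.$$
So on $\{\hat Y>0\}$ the generator has one-sided linear growth with $\bar f\equiv0$.

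Since \ref{A:A} is a two-sided-in-sign condition while we only control $y>0$, we will work directly with $\hat Y^+$ rather than quoting \cref{pro:2.2} verbatim. The proof of \cref{pro:2.2} goes through with $|y'|^p$ replaced by $(y'^+)^p$, where $y'_t:=e^{\int_0^t\rho_r{\rm d}r}\hat Y_t$. Concretely, we apply the It\^o--Tanaka formula to $(e^{\int_0^t\rho_r{\rm d}r}\hat Y_t^+)^p$, which is $C^2$ away from $0$ and $C^1$ for $p>1$; more carefully, we use the same Tanaka-type formula as in \eqref{6.3}, now carrying an indicator ${\bf 1}_{\hat Y_s>0}$. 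The drift term is $p(y'^+_s)^{p-1}e^{\int_0^s\rho}\hat g(s,\hat Y_s,\hat Z_s){\bf 1}_{\hat Y_s>0}\le p\mu_s(y'^+)^p+p\nu_s(y'^+)^{p-1}|z'_s|$. Absorbing the $z$-term via \eqref{6.7} and using \eqref{eq:2.1} kills all the $\rho,\mu,\nu$ terms, exactly as in \eqref{6.8}.

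We then localize as in \eqref{6.2}, using $\tau_n$ built from $\int|z'|^2$, and check the uniform integrability of the stochastic integral as in \eqref{6.4}. This uses the hypothesis $(Y-Y')^+\in S^p_\tau(\rho_\cdot;\R)$, which is exactly what replaces $y_\cdot\in S^p_\tau$. Following \eqref{6.9}--\eqref{6.12}, we obtain
$$\E\Big[\sup_{s\in[t\wedge\tau,\tau]}(y'^+_s)^p\Big]\le C_{p,\theta}\,\E\big[(e^{\int_0^\tau\rho}\hat\xi^+)^p\big]=0,$$
after letting $n\to\infty$ with Fatou's lemma and the convergence $y'^+_{\tau_n}\to e^{\int_0^\tau\rho}\hat\xi^+=0$, which holds by continuity (and by the definition of a solution on $\{\tau=\infty\}$). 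Hence $\hat Y^+\equiv0$, that is, $Y_t\le Y'_t$.

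In case (ii) we instead write $\hat g(t,y,z)=g(t,Y_t,Z_t)-g'(t,Y'_t,Z'_t)$. We bound it by $g'(t,Y_t,Z_t)-g'(t,Y'_t,Z'_t)\le\mu_t\hat Y_t+\nu_t|\hat Z_t|$ on $\{\hat Y_t>0\}$, using the monotonicity and Lipschitz conditions of $g'$ as in \ref{A:H2}(iv)--(v), and then run the same argument.

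The main obstacle is the passage $n\to\infty$ together with the conditional BDG step. Only $(Y-Y')^+\in S^p_\tau$ is assumed, not $Z$-integrability, so we must verify that the localized martingale term is controlled using only $\sup(y'^+)^p$ and $\int_0^{\tau_n}|z'|^2\le n$, as in \eqref{6.4}. We must also check that the term $(y'^+_{\tau_n})^p$ is dominated by $\sup_s(y'^+_s)^p\in L^1$, so that dominated convergence yields the limit $0$. Since $\mu_\cdot$ may be negative and unbounded, it is essential that all weights are absorbed through $\rho_\cdot$ and not bounded separately; the pathwise algebra of \eqref{6.8} handles this.
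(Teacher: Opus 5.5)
Your argument is correct, but it follows a different route from the paper.

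\textbf{The paper's route.} It applies the first-order Tanaka formula to $U^+_te^{\int_0^t\mu_s{\rm d}s}$ with $U_\cdot:=Y_\cdot-Y'_\cdot$, so the weight is $\mu_\cdot$, not $\rho_\cdot$. It linearizes the $z$-increment through $b_s:=\nu_sV_s|V_s|^{-1}{\bf 1}_{|V_s|\neq0}$ and removes it by a Girsanov change of measure, localized so that $\int_0^{\tau_n}\nu_s^2{\rm d}s\leq n$ yields Novikov's condition. This gives $U^+_{t\wedge\tau_n}e^{\int_0^{t\wedge\tau_n}\mu_s{\rm d}s}\leq\E[\Delta^n_t|\F_t]$. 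Condition \eqref{eq:2.1} enters only in proving that $(\Delta^n_t)_{n\geq1}$ is uniformly integrable. That is done by H\"older's inequality with the tailored exponent $q$ of \eqref{eq:4.12}, chosen so that the leftover factor is a stochastic exponential of expectation at most one.

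\textbf{Your route.} You rerun the $p$-th power estimate of \cref{pro:2.2} on $u_t:=e^{\int_0^t\rho_r{\rm d}r}(Y_t-Y'_t)^+$. After the Young absorption \eqref{6.7}, \eqref{eq:2.1} kills the zero-order terms pathwise. The hypothesis $(Y_\cdot-Y'_\cdot)^+\in S^p_\tau(\rho_\cdot;\R)$ then gives both the martingale property of the localized stochastic integral and the domination needed to send $n\to\infty$.

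\textbf{What each buys.} Your argument stays inside the paper's own machinery and avoids Girsanov, Novikov and the choice of $q$. This is consistent with \cref{rmk:4.2}, which already observes that uniqueness follows from \cref{pro:2.2}. Its cost is the It\^o--Tanaka formula for $x\mapsto(x^+)^p$ with $1<p<2$, which is not $C^2$. That formula is standard (e.g.\ the Meyer--It\^o formula plus the occupation times formula, since the function is convex with locally integrable second derivative), but you should cite or justify it. The paper only needs Tanaka's formula for $x^+$.

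\textbf{A simplification.} Since $\bar f\equiv0$ here, you do not need \eqref{6.9}--\eqref{6.12}, nor the conditional BDG step you single out as the main obstacle. Once every term on the left of your analogue of \eqref{6.8} is nonnegative, taking expectations gives $\E[u_{t\wedge\tau_n}^p]\leq\E[u_{\tau_n}^p]\to0$. Fatou's lemma together with the continuity of $Y_\cdot-Y'_\cdot$ then finishes the proof.
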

\vspace{0.1cm}

\subsection{Proof of Theorems \ref{thm:4.2} and \ref{thm:4.1}}\vspace{0.1cm}

In this subsection, we will give proofs of \cref{thm:4.2} and \cref{thm:4.1}. First, we prove \cref{thm:4.1}. The proof is partially motivated by \cite[Theorem 5.1]{LiFan2024SD}. However, some technical adjustments have to be made to address those difficulties caused by the general case that $\mu_\cdot$ and $\rho_\cdot$ in \eqref{eq:2.1} take values in $\R$.

\begin{proof}[\bf Proof of \cref{thm:4.1}]
We only prove the case (i), and the case (ii) can be proved similarly. For each $n\geq1$, denote the following $(\F_t)$-stopping time:
$$\tau_{n}:=\inf \left\{t\geq0: \int_{0}^{t}\nu_{s}^{2}{\rm d}s+ \int_{0}^{t}e^{2\int_{0}^{s}\mu_r{\rm d}r}\left(|Z_s|^2+|Z_s^{\prime}|^2\right){\rm d}s \geq n\right\} \wedge \tau,$$
with the convention that $\inf \emptyset=+\infty$.
Let
$$U_\cdot:=Y_\cdot-Y_\cdot', \ \ V_\cdot:=Z_\cdot-Z_\cdot', \ \ \zeta:=\xi-\xi'.$$
Then $U_\cdot^+\in S_\tau^p(\rho_\cdot;\R)$ and
$$
U_{t}=\zeta+\int_{t}^{\tau} \left(g(s, Y_{s}, Z_{s})-g^{\prime}(s, Y_{s}^{\prime}, Z_{s}^{\prime})\right){\rm d}s-\int_{t}^{\tau} V_{s}\cdot{\rm d}B_{s}, \ \ t\in[0,\tau].
$$
By applying It\^{o}-Tanaka's formula to $U^+_{t}e^{\int_{0}^{t}\mu_s{\rm d}s}$ in $[t\wedge\tau_n,\tau_n]$, we obtain that for each $n\geq1$ and $t\geq0$,
\begin{align}\label{eq:4.1}
\begin{split}
U^+_{t\wedge\tau_n}e^{\int_{0}^{t\wedge\tau_n}\mu_s{\rm d}s}
\leq& U^+_{\tau_n}e^{\int_{0}^{\tau_n}\mu_s{\rm d}s}-\int_{t\wedge\tau_n}^{\tau_n}e^{\int_{0}^{s}\mu_r{\rm d}r}{\bf 1}_{U_s>0}V_s\cdot{\rm d}B_s\\
&+\int_{t\wedge\tau_n}^{\tau_n}e^{\int_{0}^{s}\mu_r{\rm d}r}\left[{\bf 1}_{U_s>0}\left(g(s, Y_{s}, Z_{s})-g'(s, Y'_{s}, Z'_{s})\right)-\mu_s U^+_s\right]{\rm d}s.
\end{split}
\end{align}
Since $g(s, Y'_{s}, Z'_{s})-g'(s, Y'_{s}, Z'_{s})$ is non-positive, combining assumptions \ref{A:H2}(iv)-(v) we deduce that
\begin{align}\label{eq:4.2}
\begin{split}
&{\bf 1}_{U_s>0}\left(g(s, Y_{s}, Z_{s})-g'(s, Y'_{s}, Z'_{s})\right)-\mu_s U^+_s\\
&\ \  \ = {\bf 1}_{U_s>0}\left(g(s, Y_{s}, Z_{s})-g(s, Y'_{s}, Z'_{s})+g(s, Y'_{s}, Z'_{s})-g'(s, Y'_{s}, Z'_{s})\right)-\mu_sU^+_s\\
&\ \  \ \leq {\bf 1}_{U_s>0}\left(\mu_s|U_s|+\nu_s|V_s|\right)-\mu_sU^+_s={\bf 1}_{U_s>0}\nu_s|V_s|, \ \ s\in[0,\tau].
\end{split}
\end{align}
Denote
$$b_s:=\frac{\nu_sV_s}{|V_s|}{\bf 1}_{|V_s|\neq0}, \ \ s\in[0,\tau].$$
Then, in light of \eqref{eq:4.1} and \eqref{eq:4.2}, we can deduce that for each $n\geq1$,
\begin{align}\label{eq:4.3}
\begin{split}
U^+_{t\wedge\tau_n}e^{\int_{0}^{t\wedge\tau_n}\mu_s{\rm d}s}
&\leq U^+_{\tau_n}e^{\int_{0}^{\tau_n}\mu_s{\rm d}s}-\int_{t\wedge\tau_n}^{\tau_n}e^{\int_{0}^{s}\mu_r{\rm d}r}{\bf 1}_{U_s>0}V_s \cdot{\rm d}B_s+\int_{t\wedge\tau_n}^{\tau_n}e^{\int_{0}^{s}\mu_r{\rm d}r}{\bf 1}_{U_s>0}V_s \cdot b_s{\rm d}s\\
&= U^+_{\tau_n}e^{\int_{0}^{\tau_n}\mu_s{\rm d}s}-\int_{t\wedge\tau_n}^{\tau_n}e^{\int_{0}^{s}\mu_r{\rm d}r}{\bf 1}_{U_s>0}V_s\cdot \left(-b_s{\rm d}s+{\rm d}B_s\right), \ \ t\geq0.
\end{split}
\end{align}
By the definition of $\tau_n$ we know that the following Novikov's condition is fulfilled:
$$
\E\left[{\rm exp}\left(\frac{1}{2}\int_0^\tau{\bf 1}_{s\leq\tau_n}|b_s|^2{\rm d}s\right)\right]<+\infty.
$$
Then, let $\mathbb{Q}_n$ be the probability measure on $(\Omega,\F_\tau)$ which is equivalent to $\mathbb{P}$ and defined by
\begin{align*}
\frac{{\rm d}\mathbb{Q}_n}{{\rm d}\mathbb{P}}:={\rm exp}\left\{\int_0^\tau{\bf 1}_{s\leq\tau_n}b_s\cdot{\rm d}B_s-\frac{1}{2}\int_0^\tau{\bf 1}_{s\leq\tau_n}|b_s|^2{\rm d}s\right\}.
\end{align*}
According to Girsanov's theorem, by taking the conditional mathematical expectation with respect to $\F_t$ under $\mathbb{Q}_n$ on both sides of \eqref{eq:4.3} we can deduce that for each $n\geq1$,
\begin{align*}
\begin{split}
U^+_{t\wedge\tau_n}e^{\int_{0}^{t\wedge\tau_n}\mu_s{\rm d}s}\leq \E_{\mathbb{Q}_n}\left[U^+_{\tau_n}e^{\int_{0}^{\tau_n}\mu_s{\rm d}s}\bigg|\F_t\right]=\frac{\E\left[U^+_{\tau_n}e^{\int_{0}^{\tau_n}\mu_s{\rm d}s}\frac{{\rm d}\mathbb{Q}_n}{{\rm d}\mathbb{P}}\bigg|\F_t\right]}{\E\left[\frac{{\rm d}\mathbb{Q}_n}{{\rm d}\mathbb{P}}\bigg|\F_t\right]}, \ \ t\in[0,\tau].
\end{split}
\end{align*}
Then we have
\begin{align}\label{eq:4.4}
\begin{split}
U^+_{t\wedge\tau_n}e^{\int_{0}^{t\wedge\tau_n}\mu_s{\rm d}s}\leq\E\left[\Delta_t^n\bigg|\F_t\right], \ \ t\in[0,\tau],
\end{split}
\end{align}
where
$$
\Delta_t^n:=U^+_{\tau_n}e^{\int_{0}^{\tau_n}\mu_s{\rm d}s+\int_{t\wedge\tau_n}^{\tau_n}b_s\cdot{\rm d}B_s-\frac{1}{2}\int_{t\wedge\tau_n}^{\tau_n}|b_s|^2{\rm d}s}, \ \ t\in[0,\tau].\vspace{0.1cm}
$$
Next, we can verify that $(\Delta_t^n)^\infty_{n=1}$ is uniformly integrable for each $t\in[0,\tau]$.
Indeed, in light of $p>1$ and $\theta>1$, there exists a constant
\begin{align}\label{eq:4.12}
q:=\frac{(1+\frac{\theta}{p-1})p}{1+\frac{\theta}{p-1}+p}\in(1,p)
\end{align}
such that
\begin{align}\label{eq:4.13}
\frac{pq}{p-q}=1+\frac{\theta}{p-1}.
\end{align}
Furthermore, by virtue of the last equality and \eqref{eq:2.1} along with the fact that $|b_s|^2\leq \nu_s^2$, according to H\"{o}lder's inequality, we deduce that for each $n\geq1$ and $t\in[0,\tau]$,
\begin{align*}
\begin{split}
&\E\left[(\Delta_t^n)^q\right]\leq \E\left[\left(e^{\int_{0}^{\tau_n}(\mu_s+\frac{\theta}{2(p-1)}|b_s|^2){\rm d}s}U^+_{\tau_n}~e^{\int_{t\wedge\tau_n}^{\tau_n}b_s{\rm d}B_s-\left(\frac{1}{2}+\frac{\theta}{2(p-1)}\right)\int_{t\wedge\tau_n}^{\tau_n}|b_s|^2{\rm d}s}\right)^q\right]\\
& \ \ \leq \left(\E\left[\left(e^{\int_{0}^{\tau_n}\rho_s{\rm d}s}U^+_{\tau_n}\right)^p\right]\right)^{\frac{q}{p}} \left(\E\left[e^{\frac{pq}{p-q}\int_{t\wedge\tau_n}^{\tau_n}b_s{\rm d}B_s-\frac{pq}{p-q}\left(\frac{1}{2}+\frac{\theta}{2(p-1)}\right)\int_{t\wedge\tau_n}^{\tau_n}|b_s|^2{\rm d}s}\right]\right)^{\frac{p-q}{p}}\\
& \ \ \leq \left(\E\left[\sup_{r\in[0,\tau]}\left(e^{\int_{0}^{r}\rho_s{\rm d}s}U^+_{r}\right)^p\right]\right)^{\frac{q}{p}} \left(\E\left[e^{\frac{pq}{p-q}\int_{t\wedge\tau_n}^{\tau_n}b_s{\rm d}B_s-\frac{p^2q^2}{2(p-q)^2}\int_{t\wedge\tau_n}^{\tau_n}|b_s|^2{\rm d}s}\right]\right)^{\frac{p-q}{p}},
\end{split}
\end{align*}
and then it follows from $U_\cdot^+\in S_\tau^p(\rho_\cdot;\R)$ that
\begin{align*}
\sup_{n\geq1}\E\left[(\Delta_t^n)^q\right]<+\infty, \ \ t\in[0,\tau].
\end{align*}
Finally, in light of $\lim\limits_{n\rightarrow\infty}U^+_{\tau_n}=U^+_{\tau}=0$, by letting $n\rightarrow \infty$ in \eqref{eq:4.4} we obtain that $\lim\limits_{n\rightarrow\infty}U^+_{t\wedge\tau_n}e^{\int_{0}^{t\wedge\tau_n}\mu_s{\rm d}s}=0$ for each $t\in[0,\tau]$, that is to say, $Y_t\leq Y'_t, \ t\in[0,\tau]$. The proof is then completed.
\end{proof}

\begin{rmk}\label{rmk:4.3}
In the above process of proving that $(\Delta_t^n)^\infty_{n=1}$ is uniformly integrable for each $t\in[0,\tau]$, we actually expanded $\Delta_t^n$ into the product of $\chi_t^n$ and $\phi_t^n$, where
\begin{align}\label{eq:4.10}
\chi_t^n:=e^{\int_{0}^{\tau_n}(\mu_s+\frac{\theta}{2(p-1)}|b_s|^2){\rm d}s}U^+_{\tau_n}
\end{align}
and
\begin{align}\label{eq:4.11}
\phi_t^n:=e^{\int_{t\wedge\tau_n}^{\tau_n}b_s{\rm d}B_s-\left(\frac{1}{2}+\frac{\theta}{2(p-1)}\right)\int_{t\wedge\tau_n}^{\tau_n}|b_s|^2{\rm d}s},\ \ t\in[0,\tau],
\end{align}
and then the constant $q$ defined in \eqref{eq:4.12} is naturally selected by \eqref{eq:4.13}.
We should particularly mention that \eqref{eq:4.10}-\eqref{eq:4.11} and \eqref{eq:4.12} are different from those used in \cite[Theorem 5.1]{LiFan2024SD} since $\mu_\cdot$ takes values in $\R$ and the scope of $\rho_\cdot$ is extended to \eqref{eq:2.1}.
\end{rmk}

Next, we prove \cref{thm:4.2} by using \cref{thm:3.2} and \cref{thm:4.1}.

\begin{proof}[\bf Proof of \cref{thm:4.2}]
The uniqueness part is a direct consequence of \cref{thm:4.1}. Then, it
suffices to prove the existence part. It is not difficult to verify that \ref{A:H2} implies \ref{A:H1}(i)-(ii) and \eqref{eq:2.3*}, so the existence result follows directly from \cref{thm:3.2}. In fact, we know by \ref{A:H2}(ii) and \ref{A:H2}(v) that \ref{A:H1}(i) holds true. Furthermore, it follows from \ref{A:H2}(iii)-(v) that for each $(t,y,z) \in [0,\tau]\times\R\times\R^d$,
\begin{align}\label{eq:3.1}
\begin{split}
\sgn(y)g(\omega,t,y,z) &\leq |g(\omega,t,0,z)|+\mu_t(\omega)|y|\\
& \leq |g(\omega,t,0,0)| +\mu_t(\omega)|y| + \nu_t(\omega)|z|
\end{split}
\end{align}
and
\begin{align}\label{eq:3.2}
\begin{split}
|g(\omega,t,y,z)| &= |g(\omega,t,y,z)-g(\omega,t,y,0)+g(\omega,t,y,0)-g(\omega,t,0,0)+g(\omega,t,0,0)|\\
&\leq |g(\omega,t,0,0)|+\psi_t(|y|) + \nu_t(\omega)|z|.
\end{split}
\end{align}
Therefore, combining \eqref{eq:3.1} and \eqref{eq:3.2} with assumption \ref{A:H2}(i), we obtain that assumption \ref{A:H1}(ii) and \eqref{eq:2.3*} hold with $|g(\omega,t,0,0)|$ instead of $f_t(\omega)$ and $\phi_t(\omega,x):=|g(\omega,t,0,0)|+\psi_t(x)\in\mathbf{S}$.
\end{proof}

\begin{rmk}\label{rmk:4.2}
We note that for the case that \eqref{eq:2.3} is in force, the uniqueness part of \cref{thm:4.2} has been proved in \citet[Theorem 3.2]{Zhang2026} via \cref{pro:2.2,pro:2.3}, but the existence part of \cref{thm:4.2} cannot be proved via the method used in \citet[Theorem 3.2]{Zhang2026} since \ref{A:H2}(iii) is weaker than \eqref{eq:4.6} used in \citet{Zhang2026}. Furthermore, for the case that $\rho_\cdot$ only satisfies \eqref{eq:2.1} but not \eqref{eq:2.3}, the uniqueness part of \cref{thm:4.2} can be still proved via \cref{pro:2.2}, but the existence part of \cref{thm:4.2} cannot be proved via the method used in \citet{Zhang2026} since \ref{A:H2}(iii) is weaker than \eqref{eq:4.6}, and in light of \cref{rmk:2.1}, \cref{pro:2.3} is no longer true.
\end{rmk}

\section{An existence and uniqueness result of the minimal (maximal) solution}
\label{sec:5-An existence and uniqueness result of the minimal (maximal) solution}
\setcounter{equation}{0}

In this section, using the same weighting factor $e^{\int_0^t \rho_r {\rm d}r}$ as in Sections 3 and 4, we will proceed to establish an existence and uniqueness result of the minimal (maximal) weighted $L^p$ solution for BSDE \eqref{BSDE1.1} and the corresponding comparison theorem, where the generator $g$ satisfies either the stochastic monotonicity condition with general growth in the state variable $y$ and the stochastic linear growth condition in the state variable $z$ or assumption \ref{A:H1'} stated in Section 3.

\subsection{Statement of the main results}

Let us start with introducing the following assumptions on the generator $g$.

\begin{enumerate}
{\addtolength{\leftskip}{2em}\item[\textbf{(H3)} \ (i)]\makeatletter\def\@currentlabel{(H3)}\makeatother\label{A:H3} For each $t\in [0,\tau]$, $g(\omega,t,\cdot,z)$ is continuous uniformly in $z$ and for each $(t,y)\in [0,\tau]\times \R$, $g(\omega,t,y,\cdot)$ is continuous, i.e., for each $(t,y,y_0,z,z_0)\in [0,\tau]\times\R\times\R\times\R^d\times\R^d$, we have
    $$\lim\limits_{y\rightarrow y_0}\sup_{z\in\R^d}|g(\omega,t,y,z)-g(\omega,t,y_0,z)|=0$$
    and $$\lim\limits_{z\rightarrow z_0}|g(\omega,t,y,z)-g(\omega,t,y,z_0)|=0.$$

\item[(ii)] $g$ satisfies a stochastic monotonicity condition in $y$, i.e., for each $(t,y_1,y_2,z)\in [0,\tau]\times\R\times\R\times\R^d$,
$$
\sgn(y_1-y_2)(g(\omega,t,y_1,z)-g(\omega,t,y_2,z))\leq \mu_t(\omega)|y_1-y_2|.
$$

\item[(iii)] $g$ has a general growth in $y$ and a stochastic linear growth in $z$, i.e., there exists a $\phi_t(\omega,x)\in\mathbf{S}$ satisfying $\phi_t(\omega,0)\equiv0$ such that for each $(t,y,z)\in [0,\tau]\times\R\times\R^d$, we have
  $$
  |g(\omega,t,y,z)|\leq f_t(\omega)+\phi_t(\omega,|y|)+\nu_t(\omega)|z|.
  $$}
\end{enumerate}

\begin{rmk}\label{rmk:5.1}
We have the following two remarks.
\begin{itemize}
\item [(i)] If $g$ satisfies \ref{A:H3}(ii)-(iii), then for each $(t,y,z)\in [0,\tau]\times\R \times\R^d$, we have
\begin{align}\label{eq:5.4}
\begin{split}
    \sgn(y)g(\omega,t,y,z)&\leq \sgn(y)(g(\omega,t,y,z)-g(\omega,t,0,z))+|g(\omega,t,0,z)|\\
    &\leq f_t(\omega)+\mu_t(\omega)|y|+\nu_t(\omega)|z|,
\end{split}
\end{align}
which means that $g$ satisfies \ref{A:H1}(ii). Furthermore, it follows from \ref{A:H3}(iii) that $g$ satisfies \eqref{eq:2.3*} with $\bar{\phi}_t(\omega,x):=f_t(\omega)+\phi_t(\omega,x)$ instead of $\phi_t(\omega,x)$. Thus, assumption \ref{A:H3} can imply \ref{A:H1}(i)-(ii) and \eqref{eq:2.3*}. In addition, we would like to mention that \ref{A:H3}(i) is strictly stronger than \ref{A:H1}(i), as can be seen from \cref{ex:3.1} in Section 3, since these generators satisfy \ref{A:H1}(i) but not \ref{A:H3}(i).

\item [(ii)] It is clear that \ref{A:H3}(ii) is identical to \ref{A:H2}(iv). And, it is easy to verify that \ref{A:H3}(iii) can imply \ref{A:H2}(iii). In fact, if $g$ satisfies \ref{A:H3}(iii), by virtue of $\phi_t(0)\equiv0$, then for each $x\in\R_+$, we have
\begin{align*}
\begin{split}
\psi_t(x):=\sup_{|y|\leq x} \left|g(t,y,0)-g(t,0,0)\right|
&\leq \sup_{|y|\leq x}|g(t,y,0)|+|g(t,0,0)|\\
&\leq 2f_t+ \phi_t(x).
\end{split}
\end{align*}
It then follows from \eqref{eq:2.2} and $\int_{0}^{\tau}|\rho_t|{\rm d}t<+\infty$ along with $\phi_t(x) \in \mathbf{S}$ that
\begin{align*}
\begin{split}
\int_0^\tau \psi_t(x){\rm d}t
&\leq 2\int_0^\tau e^{-\int_0^t \rho_r {\rm d}r}e^{\int_0^t \rho_r {\rm d}r}f_t{\rm d}t+\int_0^\tau \phi_t(x){\rm d}t\\
&\leq 2e^{\int_0^\tau |\rho_r| {\rm d}r}\int_0^\tau e^{\int_0^t \rho_r {\rm d}r}f_t{\rm d}t+\int_0^\tau \phi_t(x){\rm d}t<+\infty,
\end{split}
\end{align*}
which means that \ref{A:H2}(iii) holds for $g$, and then the desired assertion is true.\vspace{0.2cm}
\end{itemize}
\end{rmk}

The following \cref{thm:5.1} is the first main result of this section.

\begin{thm}\label{thm:5.1}
Let the generator $g$ satisfy assumption \ref{A:H3} or \ref{A:H1'}. Then, for each $\xi\in L_\tau^p(\rho_\cdot;\R)$, BSDE \eqref{BSDE1.1} admits a minimal (resp. maximal) solution $(y_t,z_t)_{t\in[0,\tau]}$ such that $y_\cdot\in S_\tau^p(\rho_\cdot;\R)$, which means that if $(\bar{y}_t,\bar{z}_t)_{t\in[0,\tau]}$ is any solution of BSDE \eqref{BSDE1.1} such that $\bar{y}_\cdot\in S_\tau^p(\rho_\cdot;\R)$, then for each $t\in[0,\tau]$, $y_t\leq \bar{y}_t$ (resp. $y_t\geq \bar{y}_t$). Furthermore, if \eqref{eq:2.3} holds, then $z_\cdot\in M_\tau^p(\rho_\cdot;\R^d)$.\vspace{0.1cm}
\end{thm}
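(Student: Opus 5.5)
We construct the minimal solution by approximating $g$ from below with generators covered by \cref{thm:4.2}, and then use \cref{thm:4.1} to compare with any other solution. The maximal case follows by the symmetric construction, or by applying the minimal result to $\tilde g(t,y,z):=-g(t,-y,-z)$ with terminal value $-\xi$.

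\emph{Step 1 (approximation).} Under \ref{A:H3}, $g$ is already monotone in $y$, so we only regularize in $z$. Define the inf-convolution
$$g_n(t,y,z):=\inf_{u\in\R^d}\{g(t,y,u)+(n+\nu_t)|z-u|\}.$$
Standard arguments give the following properties. First, $g_n$ is $(n+\nu_t)$-Lipschitz in $z$, and $g_n\uparrow g$ as $n\to\infty$, using continuity in $z$ and the linear growth from \ref{A:H3}(iii). Second, $g_n$ still satisfies \ref{A:H3}(ii), since an infimum of functions that are monotone in $y$ with the same $\mu_t$ is monotone. Third, $g_n$ inherits continuity in $y$ from the uniform-in-$z$ continuity in \ref{A:H3}(i); this is exactly why the uniform continuity is assumed. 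Fourth, we have the bound $-f_t-\phi_t(|y|)-\nu_t|z|\le g_n\le g$.

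The difficulty is that the new Lipschitz coefficient $n+\nu_t$ breaks \eqref{eq:2.1}. We therefore localize, in the style of \citet{BriandHu2006PTRF}: stop at times $\sigma_k$, use $n$ only on $[0,\sigma_k]$ where $\int n^2$ is controlled, or replace $n$ by $n\mathbf 1_{t\le T_k}$ and absorb $\int_0^{T_k}n^2$ into $\rho$ on bounded intervals. A simpler alternative is to take the Lipschitz constant $\nu_t+\epsilon_n\kappa_t$ with a deterministic integrable $\kappa$ and adjust $\theta$. Under \ref{A:H1'} we instead use the two-variable inf-convolution $\inf_{(v,u)}\{g(t,v,u)+(n+\mu_t)|y-v|+(n+\nu_t)|z-u|\}$, which is Lipschitz in $(y,z)$. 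With the approximation in place, \cref{thm:4.2} gives a unique solution $(y^n,z^n)$ with $y^n\in S^p_\tau(\rho_\cdot;\R)$ for each $n$.

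\emph{Step 2 (monotonicity, bounds and passage to the limit).} By \cref{thm:4.1}, $y^n$ is nondecreasing in $n$ and $y^n\le \bar y$ for any solution $\bar y\in S^p_\tau(\rho_\cdot;\R)$ of BSDE$(\xi,\tau,g)$. Here we apply case (ii) of \cref{thm:4.1} with $g'=g_n$ versus $g$; if needed we use the comparison in the form where $g_n$ is the regular generator, since $g_n\le g$. For uniform bounds, \cref{pro:2.2} applies to all $y^n$ with the same $\bar f=f$, $\bar\mu=\mu$, $\bar\nu=\nu$ by \eqref{eq:5.4}, provided the approximation keeps the one-sided bound \ref{A:A}. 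This gives $\sup_n|y^n_t|e^{\int_0^t\rho}\le M_t$ as in \eqref{eq:3.13}. We then localize with $\tau_m$ and $\sigma_{m,l}$ exactly as in the first step of the proof of \cref{thm:3.2}, and apply the monotone stability result (\citet{LuoFan2018}, Proposition 3.1) on each stochastic interval. This yields $y:=\sup_n y^n$ continuous, $z^n\to z$ locally in $M^2$, and $(y,z)$ solving BSDE$(\xi,\tau,g)$. Fatou's lemma then gives $y\in S^p_\tau(\rho_\cdot;\R)$, and \cref{pro:2.3} gives $z\in M^p_\tau(\rho_\cdot;\R^d)$ under \eqref{eq:2.3}.

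\emph{Step 3 (minimality).} Since $y^n\le\bar y$ for all $n$, we get $y\le\bar y$, so $y$ is the minimal solution. Uniqueness of the minimal solution is immediate.

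\emph{Main obstacle.} The main obstacle is Step 1. The approximating generators must simultaneously
\begin{itemize}
\item satisfy \ref{A:H2} with a Lipschitz coefficient compatible with the fixed weight $\rho$ via \eqref{eq:2.1}, and
\item keep the one-sided bound \ref{A:A} uniformly in $n$.
\end{itemize}
The first attempt will be to make the extra Lipschitz constant $n$ act only up to the localizing stopping times, and to add the corresponding bounded term to $\rho$ there. Showing that the comparison argument of \cref{thm:4.1} survives this modification is the delicate point.
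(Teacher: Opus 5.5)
Your architecture matches the paper's proof: approximate $g$ from below by inf-convolutions, solve each approximating BSDE by \cref{thm:4.2}, and use \cref{thm:4.1} for monotonicity in $n$ and for $y^n\le\bar y$. The uniform bound comes from \cref{pro:2.2} with the original $(\rho,\mu,\nu,f)$, valid because $-f_t-\phi_t(|y|)-\nu_t|z|\le g_n\le g$. Localization, monotone stability, Fatou and \cref{pro:2.3} then finish the argument.

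\textbf{The gap.} Step 1 never produces an admissible approximating sequence, and every later step runs through \cref{thm:4.2,thm:4.1}. These require the approximant's Lipschitz coefficient $\bar\nu$ and the weight to satisfy $\int_0^\tau\bar\nu_t^2{\rm d}t<+\infty$ together with \eqref{eq:2.1}. The options you list fail as follows.
\begin{itemize}
\item With coefficient $n+\nu_t$ and the fixed $\rho$, \eqref{eq:2.1} fails. Restoring it in general requires adding about $Cn^2$ to $\rho$. On $\{\tau=+\infty\}$ this gives $\int_0^\tau\rho^n_t{\rm d}t=+\infty$. When $\tau<+\infty$ is unbounded, it multiplies the weight by the unbounded factor $e^{Cn^2\tau}$, so $\xi$ and the competitor $\bar y$ need no longer lie in the new spaces.
\item Confining $n$ to $[0,\sigma_k]$ or $[0,T_k]$ with $k$ fixed leaves on $(\sigma_k,\tau]$ only the $\nu_t$-Lipschitz envelope of $g$. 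This envelope does not converge to $g$, so you would need a second limit or a diagonal choice, which you do not specify.
\item Your ``simpler alternative'' is the closest, but it is off in two places. First, the added Lipschitz term must tend to $+\infty$ for every fixed $t$, since this is what drives property (iv) of \cref{pro:5.1}; a vanishing $\epsilon_n\kappa_t$ will not do. Second, no change of $\theta$ can absorb an additive term like $c\,\epsilon_n^2\kappa_t^2$; the weight itself must be enlarged.
\end{itemize}

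\textbf{The missing idea.} The paper adds a deterministic term that blows up pointwise but whose integral over $[0,+\infty)$ is finite for each $n$. Concretely, take $\bar\nu^n_t:=\sqrt{\nu_t^2+ne^{-t}}$, and under \ref{A:H1'} also $\bar\mu^n_t:=\mu_t+ne^{-t}$. Enlarge the weight to $\rho^n_t:=\rho_t+\frac{\theta}{2(p-1)}ne^{-t}$ (resp.\ $\rho_t+(\frac{\theta}{2(p-1)}+1)ne^{-t}$).
\begin{itemize}
\item \eqref{eq:2.1} then holds for the new coefficients.
\item Since $0\le\int_0^\tau(\rho^n_t-\rho_t){\rm d}t$ is bounded by a deterministic constant, $L^p_\tau(\rho^n_\cdot;\R)=L^p_\tau(\rho_\cdot;\R)$ and $S^p_\tau(\rho^n_\cdot;\R)=S^p_\tau(\rho_\cdot;\R)$, with equivalent norms.
\item \ref{A:H2}(i) holds with $\rho^n$, because $|g_n(t,0,0)|\le f_t$.
\end{itemize}
So \cref{thm:4.2,thm:4.1} apply verbatim with $\rho^n$ in place of $\rho$. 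Contrary to your closing paragraph, nothing about the comparison argument is delicate. A diagonal version of your localization, with the extra term supported on $[0,n]$ and $\rho$ enlarged accordingly, would work for the same reason, but it has to be chosen explicitly.

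\textbf{A smaller slip.} To get $y^n\le\bar y$ you need case (i) of \cref{thm:4.1}, with $g_n$ as the first generator and $g$ as the second. Taking $g'=g_n$ as you wrote would give the reverse inequality.
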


\begin{rmk}\label{rmk:5.2}
Regarding \cref{thm:5.1}, we make the following remarks.
\begin{itemize}
\item [(i)] Compared to \cref{thm:4.2}, in \cref{thm:5.1} we relax \ref{A:H2}(v) to \ref{A:H3}(iii), while strengthening \ref{A:H2}(ii) to \ref{A:H3}(i).

\item [(ii)] Based on \cref{thm:5.1}, by a similar analysis to \cref{cor:3.1} we can obtain an existence and uniqueness result on a usual minimal (resp. maximal) $L^p$ solution of BSDE \eqref{BSDE1.1} with unbounded stochastic coefficients under assumption \ref{A:H3} or \ref{A:H1'} provided that \eqref{eq:2.4} and \eqref{eq:2.4*} are in force. We note that this result generalizes \citet[Theorem 4.1]{Briand2007}, where the terminal time $\tau$ is a finite positive constant, and both $\mu_\cdot$ and $\nu_\cdot$ in \ref{A:H3}(ii)-(iii) are two nonnegative constants.

\item [(iii)] It is obvious that \ref{A:H1'}(ii) extends both the stochastic linear growth condition (see (H5) therein) and the time-varying linear growth condition (see (H1) therein) used in \citet{Liu2020} and \citet{Fan2011}. Therefore, \cref{thm:5.1} improves those corresponding results obtained in these two papers. In addition, it should be noted that \cref{thm:5.1} also strengthens \cref{cor:3.2}.\vspace{0.1cm}
\end{itemize}
\end{rmk}

\begin{ex}\label{ex:5.1}
We introduce three concrete examples to which \cref{thm:5.1} can be applied, but any existing relevant results, such as \citet[Theorem 4.1]{Briand2007}, \citet[Theorem 1]{Fan2011}, and \citet[Theorem 5.1]{Liu2020} and so on, can not be applied.

\begin{itemize}
\item [(i)] Let $\tau$ be a bounded stopping time, i.e., $\tau\leq T$ for some constant $T>0$ and for each $(t,y,z)\in [0,\tau]\times\R\times\R^{d}$, let
$$
g(t,y,z):=e^{-|B_t|y}-|B_t|y+\sqrt{|B_t|+1}(|z|{\bf 1}_{0\leq |z|\leq 1}+\sqrt{|z|}{\bf 1}_{|z|> 1})-1.
$$
It is not hard to verify that this $g$ satisfies assumption \ref{A:H3} with
$$
f_t\equiv0,\ \ \mu_t:=-|B_t|,\ \ \nu_t:=\sqrt{|B_t|+1},\ \ \phi_t(x):=e^{|B_t|x}+|B_t|x-1,
$$
and \eqref{eq:2.4*} is fulfilled for $p=\theta=2$. It then follows from \cref{thm:5.1} and \cref{rmk:5.2}(ii) that for each $\xi\in L_\tau^2(0;\R)$, BSDE \eqref{BSDE1.1} admits a minimal (resp. maximal) solution $(y_t,z_t)_{t\in[0,\tau]}$ in the space of $H_\tau^2(0;\R\times\R^d)$.

\item [(ii)] Let $\tau$ be a finite stopping time, i.e., $\mathbb{P}(\tau<+\infty)=1$, and for each $(t, y,z)\in [0,\tau]\times \R\times\R^{d}$, let
$$
g(t,y,z):=e^{-t}-(y+y^5)+|z|\sin |z|^2.
$$
Obviously, this $g$ satisfies assumption \ref{A:H3} with
$$
p>\frac{3}{2}, \ \ f_t:=e^{-t},\ \ \mu_t\equiv-1,\ \ \nu_t\equiv1,\ \ \phi_t(x):=x+x^5.
$$
In addition, it is obvious that \eqref{eq:2.4*} is fulfilled for $p>3/2$ and $\theta=2[1\wedge(p-1)]>1$. It then follows from \cref{thm:5.1} and \cref{rmk:5.2}(ii) that for each $\xi\in L_\tau^p(0;\R)$, BSDE \eqref{BSDE1.1} admits a minimal (resp. maximal) solution $(y_t,z_t)_{t\in[0,\tau]}$ in the space of $H_\tau^p(0;\R\times\R^d)$.

\item [(iii)] For each $(t, y,z)\in [0,\tau]\times \R\times\R^{d}$, let
$$
g(t,y,z):=|B_t|^4{\bf 1}_{0\leq t\leq 1}y\cos |z|+|B_t|^2{\bf 1}_{0\leq t\leq 1}|z|\sin y.
$$
Obviously, this $g$ satisfies assumption \ref{A:H1'} with
$$
f_t\equiv0,\ \ \mu_t:=|B_t|^4{\bf 1}_{0\leq t\leq 1},\ \ \nu_t:=|B_t|^2{\bf 1}_{0\leq t\leq 1},
$$
and $\rho_t:=\mu_t+\frac{\theta}{2[1\wedge(p-1)]}\nu_t^2$. By \cref{thm:5.1} we know that for each $\xi\in L_\tau^p(\rho_\cdot;\R)$, BSDE \eqref{BSDE1.1} admits a minimal (resp. maximal) solution $(y_t,z_t)_{t\in[0,\tau]}$ in the weighted space of $H_\tau^p(\rho_\cdot;\R\times\R^d)$.
\end{itemize}
\end{ex}

Finally, we present the following comparison theorem on the minimal (resp. maximal) weighted $L^p$ solutions of BSDEs, which is the second main result of this section.

\begin{thm}\label{thm:5.2}
Assume that $\xi, \xi^{\prime}\in L_\tau^p(\rho_\cdot;\R)$, and both generators $g$ and $g^{\prime}$ satisfy either assumption \ref{A:H3} or assumption \ref{A:H1'}. Let $(y_t,z_t)_{t\in [0,\tau]}$ and $(y_t^{\prime},z_t^{\prime})_{t\in [0,\tau]}$ be, respectively, the minimal (resp. maximal) solution of BSDE$(\xi,\tau,g)$ and BSDE$(\xi^{\prime},\tau,g^{\prime})$ such that both $y_\cdot$ and $y^{\prime}_\cdot$ belong to $S_\tau^p(\rho_\cdot;\R)$. If $\xi\leq \xi^{\prime}$ and $g(\omega,t,y,z)\leq g^{\prime}(\omega,t,y,z)$ for each $(t,y,z)\in[0,\tau]\times\R\times\R^d$, then for each $t\in [0,\tau]$, we have $$y_t\leq y^{\prime}_t.$$
\end{thm}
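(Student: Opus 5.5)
The introduction says \cref{thm:5.2} is a direct consequence of \cref{thm:5.1} and \cref{thm:4.1}. I will therefore not compare $y$ and $y'$ directly. Neither $g$ nor $g'$ need be Lipschitz in $z$, so \cref{thm:4.1} cannot be applied to the pair $(g,g')$. Instead I will go through the approximating sequence built in the proof of \cref{thm:5.1}. I treat the minimal case; the maximal case follows by the symmetric argument, or by passing to $(-y,-z)$ with generator $-g(t,-y,-z)$.

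The key structural fact I expect from the proof of \cref{thm:5.1} (stated in the introduction as a convolution approach) is the following. The minimal solution $y$ is obtained as the increasing limit $y_t=\lim_n y^n_t$, where $(y^n,z^n)$ is the unique weighted $L^p$ solution (\cref{thm:4.2}) of BSDE$(\xi,\tau,g_n)$. Here
\[
g_n(t,y,z)=\inf_{u\in\R^d}\bigl(g(t,y,u)+(n+\nu_t)|z-u|\bigr),
\]
an inf-convolution in $z$ that satisfies \ref{A:H2}, increases in $n$, and satisfies $g_n\le g$. This is where the Lipschitz regularisation in $z$ enters, and the joint continuity in \ref{A:H3}(i) handles $y$. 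Under \ref{A:H1'} one would use the analogous convolution in $(y,z)$.

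The plan is then as follows.

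\textbf{Step 1.} The inf-convolution is monotone in the generator. Since $g\le g'$ pointwise, it gives $g_n\le g'_n\le g'$ for every $n$.

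\textbf{Step 2.} Fix $n$ and apply \cref{thm:4.1}(i) to BSDE$(\xi,\tau,g_n)$ and BSDE$(\xi',\tau,g')$, with solutions $(y^n,z^n)$ and $(y',z')$. Here $g_n$ satisfies \ref{A:H2}(iv)--(v) with the coefficients $\mu_\cdot$ and $n+\nu_\cdot$.
\begin{itemize}
\item The hypothesis $g_n(t,y'_t,z'_t)\le g'(t,y'_t,z'_t)$ holds by Step 1.
\item We have $\xi\le\xi'$.
\item $(y^n-y')^+\in S^p_\tau(\rho_\cdot;\R)$, because both processes lie in that space.
\end{itemize}
The comparison gives $y^n_t\le y'_t$.

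\textbf{Step 3.} Let $n\to\infty$ to obtain $y_t\le y'_t$.

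The main obstacle is the weight condition in Step 2. \cref{thm:4.1} is proved with the weight $\rho_\cdot$, which must satisfy \eqref{eq:2.1} with the Lipschitz constant of the generator being compared. For $g_n$ that constant is $n+\nu_\cdot$, not $\nu_\cdot$, so $\rho_\cdot$ alone is too small.

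To get around this, I will use the localisation from the proof of \cref{thm:5.1}. Stop at the times
\[
\sigma_m=\inf\Bigl\{t:\int_0^t(|\mu_s|+\nu_s^2+|\rho_s|)\,{\rm d}s\ge m\Bigr\}\wedge\tau .
\]
On $[0,\sigma_m]$ all weights are bounded, so $\rho$ may be replaced by $\rho+\frac{\theta}{2(p-1)}(n+\nu)^2$. For a bounded horizon this is harmless if one also cuts at $t\le m$. Then apply \cref{thm:4.1} on $[0,\sigma_m\wedge m]$, with terminal values $y^n_{\sigma_m\wedge m}$ and $y'_{\sigma_m\wedge m}$.

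That in turn requires an ordering at the terminal time. I would obtain it by an iterated argument, or avoid it altogether by using directly the characterisation in \cref{thm:5.1}: $y$ is below every weighted $L^p$ solution. It would then suffice to build, from $y'$, a supersolution-type object. The cleanest route, if the proof of \cref{thm:5.1} provides it, is that $y=\lim_n y^n$ with each $y^n$ compared against any solution via \cref{thm:4.1}. I would mimic exactly that argument with $y'$ in place of an arbitrary solution of BSDE$(\xi,\tau,g)$, since only the inequalities $g_n\le g'$ and $\xi\le\xi'$ were used there.
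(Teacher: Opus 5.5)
Your overall strategy matches the paper's. You approximate $g$ from below by $z$-Lipschitz inf-convolutions $g_n\le g\le g'$, compare each approximant with $y'$ via \cref{thm:4.1}(i), and let $y^n\uparrow y$. You also correctly diagnose the one real difficulty: \cref{thm:4.1} needs a weight dominating $\mu_\cdot+\frac{\theta}{2(p-1)}$ times the square of the $z$-Lipschitz coefficient. However, you never resolve it, and your choice of penalty $n+\nu_t$ makes it unresolvable.

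With that penalty the required weight is $\rho_t+\frac{\theta}{2(p-1)}(n^2+2n\nu_t)$. On $\{\tau=+\infty\}$ this violates even the standing requirement $\int_0^\tau|\rho_t|{\rm d}t<+\infty$. For bounded $\tau$, the extra factor $e^{\frac{\theta n}{p-1}\int_0^t\nu_s{\rm d}s}$ still need not be bounded. So neither $\xi$ nor $y'$ need lie in the corresponding weighted space. For the same reason, the uniqueness of your $y^n$, its monotonicity in $n$, and the identification $\lim_n y^n=y$ are all unavailable.

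Your localisation fix is circular. Comparing on $[0,\sigma_m\wedge m]$ requires $y^n_{\sigma_m\wedge m}\le y'_{\sigma_m\wedge m}$, which is the conclusion itself, and the ``iterated argument'' is never given. Your final fallback simply defers to the minimality argument in the proof of \cref{thm:5.1}. That argument works only because of the ingredient you are missing.

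\emph{Missing idea.} Choose the penalty so that its square exceeds $\nu_t^2$ only by an integrable deterministic amount. Take $\bar\nu^n_t=\sqrt{\nu_t^2+ne^{-t}}$ and set $\rho^n_t:=\rho_t+\frac{\theta}{2(p-1)}ne^{-t}$. Then:
\begin{itemize}
\item $\rho^n_t\ge\mu_t+\frac{\theta}{2(p-1)}(\bar\nu^n_t)^2$.
\item Since $\int_0^\infty ne^{-t}{\rm d}t=n$, one has $1\le e^{\int_0^t(\rho^n_r-\rho_r){\rm d}r}\le e^{\frac{\theta n}{2(p-1)}}$. Hence $L^p_\tau(\rho^n_\cdot;\R)=L^p_\tau(\rho_\cdot;\R)$ and $S^p_\tau(\rho^n_\cdot;\R)=S^p_\tau(\rho_\cdot;\R)$.
\item Consequently $y^n$, unique by \cref{thm:4.2} with weight $\rho^n_\cdot$, and $y'$ both lie in $S^p_\tau(\rho^n_\cdot;\R)$.
\item \cref{thm:4.1}(i) then applies on all of $[0,\tau]$ with weight $\rho^n_\cdot$, using $g_n(t,y'_t,z'_t)\le g(t,y'_t,z'_t)\le g'(t,y'_t,z'_t)$. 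It gives $y^n\le y'$ directly, with no localisation.
\item Meanwhile $\bar\nu^n_t-\nu_t\to+\infty$ pointwise, which is all \cref{pro:5.1}(iv) needs for $g_n\to g$.
\end{itemize}
Under \ref{A:H1'} the same device is applied in $y$ as well, with $\bar\mu^n_t=\mu_t+ne^{-t}$ and $\bar\rho^n_t=\rho_t+\bigl(\frac{\theta}{2(p-1)}+1\bigr)ne^{-t}$.
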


\subsection{Proof of Theorems \ref{thm:5.1} and \ref{thm:5.2}}\vspace{0.1cm}

Before proving Theorems \ref{thm:5.1} and \ref{thm:5.2}, we introduce the following \cref{pro:5.1,pro:5.2}. Their proofs can be found in the Appendix.
\begin{pro}\label{pro:5.1}
Assume that the generator $g$ satisfies assumption \ref{A:H3} (resp. \ref{A:H1'}). For each $n\geq1$, define the function $g_n$ as follows: for $(\omega,t,y,z)\in \Omega\times [0,\tau]\times\R\times\R^d$,
$$
g_n(\omega,t,y,z):=\inf_{\bar{z}\in\R^d}\{g(\omega,t,y,\bar{z})
+\bar{\nu}_t^n(\omega)|z-\bar{z}|\}
$$
$$
({\rm resp.}\ \ g_n(\omega,t,y,z):=\inf_{(\bar y,\bar{z})\in \R\times \R^d}\{g(\omega,t,\bar y,\bar{z})+\bar\mu_t^n(\omega)|y-\bar y|
+\bar{\nu}_t^n(\omega)|z-\bar{z}|\})
$$
with
$$
\bar\mu_t^n:=\mu_t+ne^{-t}\ \ {\rm and}\ \  \bar{\nu}_t^n:=\sqrt{\nu_t^2 +ne^{-t}}.
$$
Then the sequence of function $g_n$ is well defined, and for each $n\geq1$, $g_n(\omega,t,y,z)$ is $(\F_t)$-progressively measurable for each $(y,z)\in\R\times\R^d$, and it satisfies, $\as$,
\begin{itemize}
\item[(i)] For each $z\in\R^d$, $g_n(\omega,t,\cdot,z)$ is continuous and $g_n$ satisfies \ref{A:H3}(ii)-(iii) (resp. \ref{A:H1'}(ii));
\item[(ii)] Monotonicity in $n$: for each $(y,z)\in\R\times\R^d$, $g_n(\omega,t,y,z)$ nondecreases in $n$;
\item[(iii)] Stochastic Lipschitz condition: for each $(y,y_1,y_2,z_1,z_2)\in\R\times\R\times\R\times\R^d\times\R^d,$ we have
$$
\left|g_n(\omega,t,y,z_1)-g_n(\omega,t,y,z_2)\right|\leq \bar{\nu}_t^n(\omega)|z_1-z_2|
$$
$$
({\rm resp.}\ \ \left|g_n(\omega,t,y_1,z_1)-g_n(\omega,t,y_2,z_2)\right|\leq \bar{\mu}_t^n(\omega)|y_1-y_2|+\bar{\nu}_t^n(\omega)|z_1-z_2|);
$$

\item[(iv)] Convergence: If $(y_n,z_n)\rightarrow (y,z)$, then $g_n(\omega,t,y_n,z_n)\rightarrow g(\omega,t,y,z)$, as $n\rightarrow \infty$.\vspace{0.2cm}
\end{itemize}
\end{pro}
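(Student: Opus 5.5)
The plan is the classical inf-convolution (Lepeltier--San Martin) argument, done pathwise for fixed $(\omega,t)$ and with the stochastic coefficients $\bar\mu^n_t,\bar\nu^n_t$. I treat the \ref{A:H3} case first and then indicate the changes for \ref{A:H1'}.

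\textbf{Well-definedness and lower bound.} From \ref{A:H3}(iii), $g(t,y,\bar z)\ge -f_t-\phi_t(|y|)-\nu_t|\bar z|$. Since $\bar\nu^n_t>\nu_t$, this gives
$$
g(t,y,\bar z)+\bar\nu^n_t|z-\bar z|\ \ge\ -f_t-\phi_t(|y|)-\nu_t|z|.
$$
So the infimum is finite and satisfies $g_n\ge -f_t-\phi_t(|y|)-\nu_t|z|$. Taking $\bar z=z$ gives $g_n\le g$, hence $|g_n|\le f_t+\phi_t(|y|)+\nu_t|z|$, which is \ref{A:H3}(iii).

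\textbf{Measurability.} Because $g(t,y,\cdot)$ is continuous, the infimum may be taken over $\bar z\in\mathbb{Q}^d$. A countable infimum of progressively measurable processes is progressively measurable.

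\textbf{Properties (iii) and (ii).} Property (iii) is the standard inequality $g_n(z_1)\le g(\bar z)+\bar\nu^n|z_2-\bar z|+\bar\nu^n|z_1-z_2|$; take the infimum over $\bar z$ and then swap $z_1,z_2$. Property (ii) holds because $\bar\nu^n_t$ is increasing in $n$.

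\textbf{Property (i) in $y$.} Write $g_n(t,y,z)=\inf_{\bar z}\{\cdots\}$. Continuity in $y$ follows from the uniform continuity in \ref{A:H3}(i):
$$
|g_n(t,y,z)-g_n(t,y_0,z)|\le \sup_{\bar z}|g(t,y,\bar z)-g(t,y_0,\bar z)|\to0 .
$$
For monotonicity, the case $y_1>y_2$ uses $g(y_1,\bar z)\le g(y_2,\bar z)+\mu_t(y_1-y_2)$ for every $\bar z$, which after the infimum gives $g_n(y_1,z)-g_n(y_2,z)\le\mu_t(y_1-y_2)$. The case $y_1<y_2$ is symmetric.

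\textbf{Property (iv), convergence.} The lower bound $g_n\le g$ together with continuity gives $\limsup g_n(y_n,z_n)\le g(y,z)$. For the $\liminf$, pick $\bar z_n$ that is $1/n$-optimal for $g_n(y_n,z_n)$. Set $K:=\sup_n|g(y_n,z_n)|<\infty$, using continuity of $g$ in $(y,z)$, which follows from \ref{A:H3}(i) (uniform in $y$ and continuous in $z$). Then
$$
\bar\nu^n_t|z_n-\bar z_n|-\nu_t|\bar z_n|-f_t-\phi_t(|y_n|)\le K+1 .
$$
Since $\bar\nu^n_t-\nu_t\ge\sqrt{ne^{-t}}\,/\,(\text{something bounded})\to\infty$, roughly $(\bar\nu^n_t-\nu_t)|z_n-\bar z_n|\le C+\nu_t|z_n|$, so $\bar z_n\to z$. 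Continuity then gives
$$
\liminf g_n(y_n,z_n)\ge\liminf g(y_n,\bar z_n)-1/n=g(y,z).
$$
This is the step I expect to be the main obstacle. The care needed is that $\bar\nu^n_t-\nu_t=ne^{-t}/(\bar\nu^n_t+\nu_t)$ still tends to infinity for each fixed $t$ (it behaves like $\sqrt{n e^{-t}}$), and that joint continuity of $g$ in $(y,z)$ is available.

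\textbf{The \ref{A:H1'} case.} The same scheme works with the infimum over $(\bar y,\bar z)$. The lower bound uses $\bar\mu^n\ge\mu$ and $\bar\nu^n\ge\nu$, giving $|g_n|\le f+\mu|y|+\nu|z|$. Continuity of $g$ in $(y,z)$ reduces the infimum to rationals, and the Lipschitz bounds in $(y,z)$ give (iii) and hence (i). Monotonicity in $n$ is as before. For convergence, both penalization gaps $\bar\mu^n-\mu=ne^{-t}$ and $\bar\nu^n-\nu$ tend to infinity, which forces $(\bar y_n,\bar z_n)\to(y,z)$.
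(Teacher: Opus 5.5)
Your proposal is correct and follows essentially the same route as the paper's proof, which also adapts the Lepeltier--San Mart\'in inf-convolution argument. The shared steps are the two-sided growth bound from \ref{A:H3}(iii), the elementary $\inf$--$\sup$ inequalities for continuity in $y$, monotonicity and the Lipschitz property, and a $1/n$-optimal $\bar z_n$ forced to converge to $z$ because $\bar\nu^n_t-\nu_t\to+\infty$. You also make explicit two points the paper leaves implicit: progressive measurability via an infimum over rational points, and joint continuity of $g$ in $(y,z)$ derived from \ref{A:H3}(i). Two small slips: $g_n\le g$ is an upper bound, not a lower bound, and in the \ref{A:H1'} case the lower bound on $g_n$ also needs $\mu_\cdot\ge 0$, which \ref{A:H1'}(ii) guarantees.
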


\begin{pro}\label{pro:5.2}
Let $(h_t)_{t\in[0,\tau]}$ be an $(\F_t)$-progressively measurable nonnegative real-valued process satisfying $\int_0^\tau h_t {\rm d}t\leq C_1$ and let the generator $g$ satisfy that
\begin{align}\label{eq:5.21}
|g(t,y,z)|\leq h_t+C_1|z|^2, \ \ \  (t,y,z)\in[0,\tau]\times[-C_2,C_2]\times\R^d,
\end{align}
where $C_i~(i=1,2)$ are two nonnegative constants. Assume that $(y_t,z_t)_{t\in[0,\tau]}$ is an adapted solution of BSDE \eqref{BSDE1.1} such that $|y_\cdot|\leq C_2$. Then $z_\cdot\in M_\tau^2(0;\R^d)$.\vspace{0.2cm}
\end{pro}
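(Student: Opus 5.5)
We would prove the proposition by the standard exponential-change-of-variables argument for quadratic BSDEs with bounded first component, combined with a localization in $z$.

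\textbf{Setup.} Assume first $C_1>0$ and put $u(x):=e^{4C_1x}$ for $x\in[-C_2,C_2]$. Then $u'=4C_1u$ and $u''=16C_1^2u$, so that
\[
\tfrac12u''(x)-C_1|u'(x)|=4C_1^2u(x)\geq 4C_1^2e^{-4C_1C_2}=:c_0>0 .
\]
The point of the factor $4$ in the exponent is to leave a strictly positive coefficient in front of $|z|^2$ after absorbing the quadratic part of $g$. With the exponent $2C_1$ this coefficient would vanish.

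\textbf{Localization.} For each $n\geq1$ define
\[
\tau_n:=\inf\Big\{t\geq0:\int_0^t|z_s|^2{\rm d}s\geq n\Big\}\wedge\tau .
\]
Since $(y,z)$ is an adapted solution, $\int_0^\tau|z_s|^2{\rm d}s<+\infty$ a.s., hence $\tau_n\uparrow\tau$ stationarily. Because $|y_\cdot|\le C_2$, the process $u'(y_\cdot)$ is bounded by $4C_1e^{4C_1C_2}$. Therefore $\int_0^{\cdot\wedge\tau_n}u'(y_s)z_s\cdot{\rm d}B_s$ is a true martingale with zero expectation, and this also covers $\{\tau=+\infty\}$.

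\textbf{Itô's formula.} Applying It\^o's formula to $u(y_t)$ on $[0,\tau_n]$ and using ${\rm d}y_s=-g(s,y_s,z_s){\rm d}s+z_s\cdot{\rm d}B_s$ gives
\[
\int_0^{\tau_n}\tfrac12u''(y_s)|z_s|^2{\rm d}s=u(y_{\tau_n})-u(y_0)+\int_0^{\tau_n}u'(y_s)g(s,y_s,z_s){\rm d}s-\int_0^{\tau_n}u'(y_s)z_s\cdot{\rm d}B_s .
\]
Since $y_s\in[-C_2,C_2]$, \eqref{eq:5.21} yields $|u'(y_s)g(s,y_s,z_s)|\le|u'(y_s)|(h_s+C_1|z_s|^2)$. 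Moving the $|z|^2$-part to the left and using the bound on $\tfrac12u''-C_1|u'|$ gives
\[
c_0\int_0^{\tau_n}|z_s|^2{\rm d}s\le 2e^{4C_1C_2}+4C_1e^{4C_1C_2}\int_0^{\tau}h_s{\rm d}s-\int_0^{\tau_n}u'(y_s)z_s\cdot{\rm d}B_s .
\]

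\textbf{Conclusion.} Taking expectations and using $\int_0^\tau h_s{\rm d}s\le C_1$, we get
\[
\E\Big[\int_0^{\tau_n}|z_s|^2{\rm d}s\Big]\le c_0^{-1}e^{4C_1C_2}(2+4C_1^2),
\]
a bound independent of $n$. Monotone convergence as $n\to\infty$ then gives $\E[\int_0^\tau|z_s|^2{\rm d}s]<+\infty$, i.e.\ $z_\cdot\in M_\tau^2(0;\R^d)$.

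\textbf{Degenerate case $C_1=0$.} Here $h\equiv0$ is forced only through $\int h\le 0$, so $g$ vanishes on $[-C_2,C_2]\times\R^d$. We apply It\^o to $y^2$ instead, which directly gives $\E[\int_0^{\tau_n}|z_s|^2{\rm d}s]\le C_2^2$. Alternatively, replace $C_1$ by $1$ in \eqref{eq:5.21}, which remains valid since it is only a growth bound; the constraint $\int h\le C_1$ then still holds.

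\textbf{Main obstacle.} The only delicate points are the choice of exponent giving a strictly positive coefficient in front of $|z|^2$, and the martingale property on a possibly infinite horizon. The latter is handled by the localization $\tau_n$ together with the boundedness of $u'(y)$.
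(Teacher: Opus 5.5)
Your proof is correct and follows essentially the same route as the paper: It\^o's formula for an exponential transform of $y$ that absorbs the quadratic part of $g$, localization by $\tau_n:=\inf\{t\geq0:\int_0^t|z_s|^2{\rm d}s\geq n\}\wedge\tau$, taking expectations, and letting $n\to\infty$. The difference is cosmetic: the paper uses $f(|y|)$ with $f(x)=\gamma^{-2}(e^{\gamma x}-\gamma x-1)$ and $\gamma=2C_1$, so that $f''-\gamma f'\equiv1$ exactly, whereas you take $e^{4C_1y}$ and bound $\frac12u''-C_1|u'|$ from below by $4C_1^2e^{-4C_1C_2}$; your separate handling of $C_1=0$ also covers a case in which the paper's $f$ is undefined.
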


Based on the analysis in Section 3, we know that assumption \ref{A:H1'} can imply the assumptions of \cref{thm:3.2}. As seen in \cref{rmk:5.1}(ii), assumption \ref{A:H3} can also imply \ref{A:H1}(i)-(ii) and \eqref{eq:2.3*}. However, it should be especially emphasized that \cref{thm:5.1} can not be proved by the same approach used in the proof of \cref{thm:3.2} since the method of truncation described in \eqref{eq:3.112} is not valid any longer. \vspace{0.2cm}

Next, with the help of \cref{pro:5.1,pro:5.2} along with \cref{thm:4.2,thm:4.1}, we will employ the convolution approaching method and the localization method to prove \cref{thm:5.1}.

\begin{proof}[\bf Proof of \cref{thm:5.1}]
We will prove \cref{thm:5.1} in two steps. Here we only prove the case of the minimal solution. Another case can be proved in the same way.\vspace{0.1cm}

{\bf First Step:} In this step, we prove \cref{thm:5.1} under the assumption \ref{A:H3}. For each $n\geq1$, define
$$
g_n(\omega,t,y,z):=\inf_{\bar{z}\in\R^d}\{g(\omega,t,y,\bar{z})
+\bar{\nu}_t^n(\omega)|z-\bar{z}|\}
$$
with
$$
\bar{\nu}_t^n:=\sqrt{\nu_t^2 +ne^{-t}},\vspace{-0.2cm}
$$
and\vspace{-0.1cm}
\begin{align}\label{eq:5.3}
\rho_t^n:=\rho_t +\frac{\theta}{2(p-1)}ne^{-t}, \ \ t\in[0,\tau].
\end{align}
It is obvious that $\int_0^\tau (|\rho_t^n| +(\bar{\nu}_t^n)^2)<+\infty$ and that the weighted spaces $L_\tau^p(\rho_\cdot^n;\R)$ and $H_\tau^p(\rho_\cdot^n;\R\times\R^d)$ are equivalent to $L_\tau^p(\rho_\cdot;\R)$ and $H_\tau^p(\rho_\cdot;\R\times\R^d)$, respectively. By \eqref{eq:2.1} and \eqref{eq:5.3}, we know that
\begin{align*}
\rho_t^n\geq \mu_t+ \frac{\theta}{2(p-1)}(\bar{\nu}_t^n)^2, \ \ t\in[0,\tau].
\end{align*}

In the sequel, by \cref{thm:4.2} we will first verify that for each $n\geq1$ and $\xi\in L_\tau^p(\rho_\cdot;\R)=L_\tau^p(\rho_\cdot^n;\R)$, the following BSDE$(\xi,\tau,g_n)$:
\begin{align}\label{BSDEg_n}
  y^n_t=\xi+\int_t^\tau g_n(s,y_s^n,z_s^n){\rm d}s-\int_t^\tau z_s^n\cdot{\rm d}B_s,~~t\in[0,\tau].
\end{align}
admits a unique solution $(y_t^n,z_t^n)_{t\in[0,\tau]}$ such that $y_\cdot^n\in S_\tau^p(\rho_\cdot^n;\R)=S_\tau^p(\rho_\cdot;\R)$. In fact, it follows from (i) of \cref{pro:5.1} that for each $n\geq1$, $g_n$ satisfies \ref{A:H3}(iii), and then by \eqref{eq:5.3} and \eqref{eq:2.2}, we have
\begin{align*}
\begin{split}
\E\left[\left(\int_0^\tau e^{\int_0^s \rho_r^n{\rm d}r}|g_n(s,0,0)|{\rm d}s\right)^p\right]
&\leq \E\left[\left(\int_0^\tau e^{\int_0^s \rho_r{\rm d}r}e^{\int_0^s \frac{\theta}{2(p-1)}ne^{-r}{\rm d}r}f_s\right)^p\right]\\
&\leq e^{\frac{n\theta}{2(p-1)}} \E\left[\left(\int_0^\tau e^{\int_0^s \rho_r{\rm d}r}f_s{\rm d}s\right)^p\right]<+\infty,
\end{split}
\end{align*}
which yields that $g_n$ satisfies \ref{A:H2}(i) with $\rho_\cdot^n$ instead of $\rho_\cdot$. Furthermore, by (i) and (iii) of \cref{pro:5.1} along with (ii) of \cref{rmk:5.1} we know that $g_n$ also satisfies \ref{A:H2}(ii)-(v) with $\bar{\nu}_\cdot^n$ instead of $\nu_\cdot$. Therefore, the desired assertion follows immediately from \cref{thm:4.2}.

In light of (ii) of \cref{pro:5.1} and the fact that $\rho_\cdot^{n}\leq \rho_\cdot^{n+1}$, by \cref{thm:4.1} we know that $(y_t^n)_{t\in[0,\tau]}$ is nondecreasing in $n$. Furthermore, since $g_n$ satisfies \ref{A:H3}(ii)-(iii), we can deduce that for each $n\geq1$ and $(t,y,z)\in [0,\tau]\times\R \times\R^d$,
\begin{align}\label{eq:5.5*}
\begin{split}
    \sgn(y)g_n(\omega,t,y,z)&\leq \sgn(y)(g_n(\omega,t,y,z)-g_n(\omega,t,0,z))+|g(\omega,t,0,z)|\\
    &\leq f_t(\omega)+\mu_t(\omega)|y|+\nu_t(\omega)|z|,
\end{split}
\end{align}
which means that $g_n$ satisfies assumption \ref{A:A} with $\bar{\mu}_t:=\mu_t$, $\bar{\nu}_t:=\nu_t$, $\bar{\rho}_t:=\rho_t$ and $\bar{f}_t:=f_t$.
In light of $(y_t^n)_{t\in[0,\tau]}\in S_\tau^p(\rho_\cdot;\R)$, it follows from \cref{pro:2.2} that there exists a constant $C_{p,\theta}>0$ depending only on $p$ and $\theta$ such that for each $n\geq1$ and $t\geq0$,
\begin{align}\label{eq:5.5}
\begin{split}
e^{p\int_{0}^{t}\rho_r{\rm d}r}|y_{t}^{n}|^p &\leq \E\left[\sup_{s\in[t\wedge\tau,\tau]}\left(e^{p\int_{0}^{s}\rho_r{\rm d}r}|y_s^{n}|^{p}\right)\bigg|\F_{t}\right]\\
&\leq
C_{p,\theta}\E\left[e^{p\int_{0}^{\tau}\rho_r{\rm d}r}|\xi|^{p}+\left(
\int_{0}^{\tau}e^{ \int_{0}^{s}\rho_r{\rm d}r}f_s{\rm d}s\right)^{p}\bigg|\F_{t}\right]=:|\tilde{M}_t|^p,
\end{split}
\end{align}
and then
\begin{align}\label{eq:5.6}
|y_{t}^n|\le |\tilde{M}_t| e^{-\int_0^t \rho_r{\rm d}r}=:M_t^{\prime}, \ \ t\in[0,\tau].
\end{align}

Now, for each pair of integers $m,l \geq 1$, we define the following two stopping times:
$$
\bar{\tau}_m = \inf\left\{t\geq0 : M_t^{\prime} \geq m\right\} \wedge \tau \vspace{-0.2cm}
$$
and
$$
\bar{\sigma}_{m,l} = \inf\left\{t\geq0 : \int_0^t (f_s+\phi_s(m)+\nu_s^2){\rm d}s \geq l\right\} \wedge \bar{\tau}_m,\vspace{0.1cm}
$$
with the convention that $\inf \emptyset=+\infty$. Then $(y_{m,l}^{n}(t), z_{m,l}^{n}(t))_{t\in[0,\tau]}:= (y_{t\wedge \bar{\sigma}_{m,l}}^{n}, z_t^{n}1_{t \leq \bar{\sigma}_{m,l}})_{t\in[0,\tau]}$ is a solution to the following BSDE such that $(y_{t\wedge\bar{\sigma}_{m,l}}^{n})_{t\in[0,\tau]}\in S_\tau^p(\rho_\cdot;\R)$:
\begin{align}\label{BSDE5.1}
y_{m,l}^{n}(t) = y_{\bar{\sigma}_{m,l}}^{n} + \int_t^\tau 1_{s \leq \bar{\sigma}_{m,l}} g_{n}(s,y_{m,l}^{n}(s),z_{m,l}^{n}(s)){\rm d}s - \int_t^\tau z_{m,l}^{n}(s)\cdot{\rm d}B_s, \ \ t\in[0,\tau].
\end{align}
Note that for each fixed $m,l\geq1$, $(y_{m,l}^{n}(t))_{t\in[0,\tau]}$ is nondecreasing in $n$ and by (iv) of \cref{pro:5.1} that $\as$, $(g_n)_{n}$ converges locally uniformly in $(y,z)$ to $g$ as $n\rightarrow \infty$. By \eqref{eq:5.6} along with the definitions of $\bar{\tau}_m$ and $\bar{\sigma}_{m,l}$ we can obtain that
\begin{align}\label{eq:5.7}
\as,\ \ \ \sup_{n\geq 1}|y_{m,l}^{n}(t)| \leq m.
\end{align}
Since $g_n$ satisfies \ref{A:H3}(iii) for each $n\geq1$, we know that for each $(t,y,z)\in[0,\tau]\times[-m,m]\times\R^d$,
\begin{align*}
    \sup\limits_{n\geq 1}\left({\bf 1}_{s\leq \bar{\sigma}_{m,l}}|g_n(s,y,z)|\right)
    &\leq {\bf 1}_{s\leq \bar{\sigma}_{m,l}} \left(f_s+\phi_s(m)+\nu_s|z|\right)\\
    &\leq {\bf 1}_{s\leq \bar{\sigma}_{m,l}} \left(f_s+\phi_s(m)+\nu_s^2\right)+|z|^2
\end{align*}
with $\int_0^\tau 1_{s\leq \bar{\sigma}_{m,l}}\left(f_s+\phi_s(m)+\nu_s^2\right) {\rm d}s\leq l$. Furthermore, in light of the last inequality and \eqref{eq:5.7}, it follows from \cref{pro:5.2} that $(z_{m,l}^{n}(t))_{t\in[0,\tau]}\in M_\tau^2(0;\R^d)$.

Next, by an identical argument to the proof of \cref{thm:3.2}, we can also apply Proposition 3.1 in \citet{LuoFan2018} to obtain that for each $m,l\geq 1$, the process
$$
y_{m,l}(t):=\sup_{n\geq1} y_{t\wedge \bar{\sigma}_{m,l}}^{n}
$$
is continuous and the sequence of processes $(z_t^{n}1_{t\leq \bar{\sigma}_{m,l}})_{t\in[0,\tau]}$ converges to $(z_{m,l}(t))_{t\in[0,\tau]}$ strongly in $M_\tau^2(0;\R^d)$ as $n\rightarrow \infty$ such that
$$
y_{m,l}(t)=\sup_{n\geq1} y_{\bar{\sigma}_{m,l}}^{n}+ \int_t^\tau 1_{s\leq \bar{\sigma}_{m,l}} g(s,y_{m,l}(s),z_{m,l}(s)){\rm d}s- \int_t^\tau z_{m,l}(s)\cdot{\rm d}B_s, \ \ t \in [0,\tau].
$$
Moreover, we can conclude that $(y_t,z_t)_{t\in[0,\tau]}$
is an adapted solution to BSDE$(\xi,\tau,g)$, where
\begin{align*}
y_t:=\sup_{n\geq1} y_t^n~~ \text{and}~~ z_t:= \sum_{m=1}^{+\infty}\left( \sum_{l=1}^{+\infty}z_{m,l}(t)1_{t\in[\bar{\sigma}_{m,l-1},\bar{\sigma}_{m,l})}\right)1_{t\in[\bar{\tau}_{m-1},\bar{\tau}_m)}, \ t\in[0,\tau]
\end{align*}
with $\bar{\tau}_0:=1$ and $\bar{\sigma}_{m,0}:=0$ for each $m\geq 1$. And, by sending $n\rightarrow \infty$ in \eqref{eq:5.5} with $t=0$ and using Fatou's lemma, we deduce that $(y_t)_{t\in[0,\tau]}\in S_\tau^p(\rho_\cdot;\R)$.

Furthermore, we verify that $(y_t,z_t)_{t\in[0,\tau]}$ is just the minimal weighted $L^p$ solution of BSDE$(\xi,\tau,g)$. In fact, let $(\bar{y}_t,\bar{z}_t)_{t\in[0,\tau]}$ be any solution of BSDE$(\xi,\tau,g)$ such that $\bar{y}_\cdot\in S_\tau^p(\rho_\cdot;\R)=S_\tau^p(\rho_\cdot^n;\R)$. Note that $(y_t^n,z_t^n)_{t\in[0,\tau]}$ is the unique solution of BSDE \eqref{BSDEg_n} such that $y_\cdot^n\in S_\tau^p(\rho_\cdot^n;\R)$. In light of \cref{pro:5.1}, by \cref{thm:4.1} we obtain that $y_t^n\leq \bar{y}_t$ for each $t\in[0,\tau]$ and $n\geq1$, from which and by letting $n\rightarrow\infty$ we get that for each $t\in[0,\tau]$, $y_t\leq \bar{y}_t$. The desired assertion follows.

Finally, in light of \eqref{eq:5.4}, if \eqref{eq:2.3} holds, then it follows from \cref{pro:2.3} that $z_\cdot\in M_\tau^p(\rho_\cdot;\R^d)$.\vspace{0.2cm}

{\bf Second Step:} In this step, we use the method similar to that in the first step to prove \cref{thm:5.1} under the assumption \ref{A:H1'}.
For each $n\geq1$, define
$$
\bar{g}_n(\omega,t,y,z):=\inf_{(\bar {y},\bar{z})\in \R\times \R^d}\{g(\omega,t,\bar {y},\bar{z})+\bar\mu_t^n(\omega)|y-\bar y|
+\bar{\nu}_t^n(\omega)|z-\bar{z}|\})
$$
with
$$
\bar\mu_t^n:=\mu_t+ne^{-t}\ \ {\rm and}\ \  \bar{\nu}_t^n:=\sqrt{\nu_t^2 +ne^{-t}}
$$
and
\begin{align}\label{eq:5.11}
\bar{\rho}_t^n:=\rho_t +\left(\frac{\theta}{2(p-1)}+1\right)ne^{-t}, \ \ t\in[0,\tau].
\end{align}
It is obvious that $\int_0^\tau (|\bar{\rho}_t^n| +|\bar\mu_t^n|+(\bar{\nu}_t^n)^2)<+\infty$ and that the weighted spaces $L_\tau^p(\bar{\rho}_\cdot^n;\R)$ and $H_\tau^p(\bar{\rho}_\cdot^n;\R\times\R^d)$ are equivalent to $L_\tau^p(\rho_\cdot;\R)$ and $H_\tau^p(\rho_\cdot;\R\times\R^d)$, respectively. By \eqref{eq:2.1} and \eqref{eq:5.11}, we know that
\begin{align*}
\bar\rho_t^n\geq \bar\mu_t^n+ \frac{\theta}{2(p-1)}(\bar{\nu}_t^n)^2, \ \ t\in[0,\tau].
\end{align*}

In the sequel, by \cref{thm:4.2} we will verify that for each $n\geq1$ and $\xi\in L_\tau^p(\rho_\cdot;\R)=L_\tau^p(\bar\rho^n_\cdot;\R)$, the following BSDE$(\xi,\tau,\bar{g}_n)$:
\begin{align}\label{BSDE*g_n}
  y^n_t=\xi+\int_t^\tau \bar{g}_n(s,y_s^n,z_s^n){\rm d}s-\int_t^\tau z_s^n\cdot{\rm d}B_s,~~t\in[0,\tau]
\end{align}
admits a unique solution $(y_t^n,z_t^n)_{t\in[0,\tau]}$ such that $y_\cdot^n\in S_\tau^p(\bar{\rho}_\cdot^n;\R)=S_\tau^p(\rho_\cdot;\R)$. In fact, by (i) of \cref{pro:5.1} we know that for each $n\geq1$, $\bar{g}_n$ satisfies assumptions \ref{A:H2}(ii) and \ref{A:H1'}(ii). Then by \eqref{eq:5.11} and \eqref{eq:2.2} we have
\begin{align*}
\begin{split}
\E\left[\left(\int_0^\tau e^{\int_0^s \bar{\rho}_r^n{\rm d}r}|\bar{g}_n(s,0,0)|{\rm d}s\right)^p\right]
&\leq \E\left[\left(\int_0^\tau e^{\int_0^s \rho_r{\rm d}r}e^{\int_0^s \left(\frac{\theta}{2(p-1)}+1\right)ne^{-r}{\rm d}r}f_s\right)^p\right]\\
&\leq e^{n\left(\frac{\theta}{2(p-1)}+1\right)} \E\left[\left(\int_0^\tau e^{\int_0^s \rho_r{\rm d}r}f_s{\rm d}s\right)^p\right]<+\infty,
\end{split}
\end{align*}
which yields that $\bar{g}_n$ satisfies \ref{A:H2}(i) with $\bar{\rho}_\cdot^n$ instead of $\rho_\cdot$. Moreover, by a similar analysis (ii) of \cref{rmk:5.1}, it follows from \ref{A:H1'}(ii) that $\bar{g}_n$ also satisfies \ref{A:H2}(iii). And, according to (iii) of \cref{pro:5.1}, we know that $\bar{g}_n$ satisfies \ref{A:H2}(iv)-(v) with $\bar{\mu}_\cdot^n$ and $\bar{\nu}_\cdot^n$ instead of $\mu_\cdot$ and $\nu_\cdot$, respectively. Therefore, the desired assertion follows immediately from \cref{thm:4.2}.

In light of (ii) of \cref{pro:5.1} and the fact that $\bar{\rho}_\cdot^{n}\leq \bar{\rho}_\cdot^{n+1}$, by \cref{thm:4.1} we can deduce that $(y_t^n)_{t\in[0,\tau]}$ is nondecreasing in $n$. Furthermore, since $y_\cdot^n\in S_\tau^p(\rho_\cdot;\R)$ and $\bar{g}_n$ satisfies \ref{A:H1'}(ii), it is easy to obtain that \eqref{eq:5.5*}-\eqref{eq:5.6} remain true, and then we have
$$
|y_t^n|\leq M'_t, \ \ t\in[0,\tau].
$$
Finally, for each pair of integers $m,l\geq 1$, define the following two stopping times:
$$ \tilde{\tau}_m = \inf\left\{t\geq0 : M'_t\geq m\right\} \wedge \tau$$
and
$$
\tilde{\sigma}_{m,l} = \inf\left\{t\geq0 : \int_0^t (f_s+m\mu_s+\nu_s^2){\rm d}s \geq l\right\} \wedge \tilde{\tau}_m.\vspace{0.1cm}
$$
The rest of the proof runs as that in the first step. The proof of \cref{thm:5.1} is then complete.
\end{proof}

\begin{rmk}\label{rmk:5.3}
During the above proof, by \cref{pro:5.2} we verify that $(z_{m,l}^{n}(t))_{t\in[0,\tau]}\in M_\tau^2(0;\R^d)$, which enables us to apply \cite[Proposition 3.1]{LuoFan2018} and the localization method.
\end{rmk}

Finally, we give the proof of \cref{thm:5.2}. Here, we will only prove the comparison theorem for the minimal solution. The case of the maximal solution can be proved by a similar argument.

\begin{proof}[\bf Proof of \cref{thm:5.2}]
Assume first that $\xi, \xi^{\prime}\in L_\tau^p(\rho_\cdot;\R)$ satisfy $\xi\leq \xi^{\prime}$, and that generators $g$ and $g^{\prime}$ satisfy \ref{A:H3} and for each $(t,y,z)\in[0,\tau]\times\R\times\R^d$,
\begin{align}\label{eq:5.8}
g(t,y,z)\leq g^{\prime}(t,y,z).
\end{align}
Let $(y_\cdot,z_\cdot)$ and $(y_\cdot^{\prime},z_\cdot^{\prime})$ be, respectively, the minimal solution of BSDE$(\xi,\tau,g)$ and BSDE$(\xi^{\prime},\tau,g^{\prime})$ such that both $y_\cdot$ and $y^{\prime}_\cdot$ belong to $S_\tau^p(\rho_\cdot;\R)$. Let $g_n$, $\bar{\nu}_\cdot^n$ and $\rho_\cdot^n$ be defined as in the proof of \cref{thm:5.1}. Then, for each $n\geq1$, $g_n$ satisfies \ref{A:H2} with $\rho_\cdot^n$ and $\bar{\nu}_\cdot^n$ instead of $\rho_\cdot$ and $\nu_\cdot$, respectively, and BSDE$(\xi,\tau,g_n)$ admits a unique solution $(y_t^n,z_t^n)_{t\in[0,\tau]}$ such that $y_\cdot^n\in S_\tau^p(\rho_\cdot^n;\R)$, and for each $t\in[0,\tau]$,
\begin{align}\label{eq:5.9}
y_t^n \uparrow y_t.
\end{align}
Moreover, it follows from the definition of $g_n$ and \eqref{eq:5.8} that for each $n\geq1$ and $(t,y,z)\in[0,\tau]\times\R\times\R^d$,
$$
g_n(t,y,z) \leq g(t,y,z)\leq g^{\prime}(t,y,z),
$$
which implies that for each $n\geq1$ and $t\in[0,\tau]$,
$
g_n(t,y^{\prime}_t,z^{\prime}_t) \leq g^{\prime}(t,y^{\prime}_t,z^{\prime}_t).
$
Thus, since $g_n$ satisfies \ref{A:H2}(iv)-(v), both $y_\cdot^n$ and $y^{\prime}_\cdot$ belong to $S_\tau^p(\rho_\cdot^n;\R)$, by \cref{thm:4.1} we deduce that for each $n\geq1$,
\begin{align}\label{eq:5.10}
y_t^n \leq y^{\prime}_t,\ \ t\in[0,\tau].
\end{align}
In light of \eqref{eq:5.9}, the desired assertion of $y_\cdot\leq y'_\cdot$ follows by letting $n\rightarrow \infty$ in \eqref{eq:5.10}. In the same way as the above proof process, we can easily obtain the previous assertion under assumption \ref{A:H1'}.
\end{proof}

\section*{Appendix}

\begin{proof}[{\bf Proof of \cref{pro:5.1}.}]
The proof can be regarded as a modification of \citet[Lemma 1]{Lepeltier1997}. For readers' convenience, we list it as follows. Here, we only prove the case that the generator $g$ satisfies assumption \ref{A:H3}, and the other case can be easily proved by an identical argument. We first prove (i). Since $g$ satisfies \ref{A:H3}(iii), we have that for each $n\geq1$ and $(t,y,z) \in[0,\tau]\times\R\times\R^d$,
\begin{align*}
g_n(\omega,t,y,z)&=\inf\limits_{\bar z\in\R^d}\{g(\omega,t,y,\bar z)+\bar\nu_t^n(\omega)|z-\bar z|\}\\
&\geq \inf\limits_{\bar z\in\R^d}\{-f_t(\omega)-\phi_t(\omega,|y|)-\nu_t(\omega)|\bar z|+\nu_t(\omega)|z-\bar z|\} \\
&\geq -f_t(\omega)-\phi_t(\omega,|y|)-\nu_t(\omega)|z|.
\end{align*}
And, by the definition of $g_n$ and \ref{A:H3}(iii) we deduce that for each $n\geq1$ and $(t,y,z) \in[0,\tau]\times\R\times\R^d$,
\begin{align}\label{eq:A.1}\tag{A.1}
g_n(\omega,t,y,z)\leq g(\omega,t,y,z)\leq f_t(\omega)+\phi_t(\omega,|y|)+\nu_t(\omega)|z|,
\end{align}
so $g_n$ satisfies \ref{A:H3}(iii). Now, let us recall two basic inequalities which will be used repeatedly:
\begin{align}\label{eq:A.2}\tag{A.2}
\inf\limits_{x\in D}f(x)-\inf\limits_{x\in D}g(x)\leq \sup\limits_{x\in D}(f(x)-g(x))
\end{align}\vspace{-0.2cm}
and\vspace{-0.2cm}
\begin{align}\label{eq:A.3}\tag{A.3}
|\inf\limits_{x\in D}f(x)-\inf\limits_{x\in D}g(x)|\leq \sup\limits_{x\in D}|f(x)-g(x)|.
\end{align}
By the definition of $g_n$ and \eqref{eq:A.3} we get that for each $n\geq1$ and $(t,y,y_0,z) \in[0,\tau]\times\R\times\R\times\R^d$,
\begin{align*}
&|g_n(\omega,t,y,z)-g_n(\omega,t,y_0,z)|\\
& \ \  = \left|\inf\limits_{\bar z\in\R^d}\{g(\omega,t,y,\bar z)+\bar\nu_t^n(\omega)|z-\bar z|\}-\inf\limits_{\bar z\in\R^d}\{g(\omega,t,y_0,\bar z)+\bar\nu_t^n(\omega)|z-\bar z|\}\right|\\
& \ \ \leq \sup\limits_{\bar z\in\R^d}|g(\omega,t,y,\bar z)-g(\omega,t,y_0,\bar z)|.
\end{align*}
Combining the last inequality with \ref{A:H3}(i) we obtain that $\as$, for each $z\in\R^d$, $g_n(\omega,t,\cdot,z)$ is continuous. Furthermore, since $g$ satisfies \ref{A:H3}(ii), it follows from the definition of $g_n$ and \eqref{eq:A.2} that for each $n\geq1$ and $(t,y_1,y_2,z) \in[0,\tau]\times\R\times\R\times\R^d$, assuming without loss of generality that $y_1>y_2$,
\begin{align*}
&\sgn(y_1-y_2)\left(g_n(\omega,t,y_1,z)-g_n(\omega,t,y_2,z)\right)=g_n(\omega,t,y_1,z)-g_n(\omega,t,y_2,z)\\
& \ \ \ =\inf\limits_{\bar z\in\R^d}\{g(\omega,t,y_1,\bar z)+\bar\nu_t^n(\omega)|z-\bar z|\}-\inf\limits_{\bar z\in\R^d}\{g(\omega,t,y_2,\bar z)+\bar\nu_t^n(\omega)|z-\bar z|\}\\
& \ \ \ \leq \sup\limits_{\bar z\in\R^d}\left[g(\omega,t,y_1,\bar z)-g(\omega,t,y_2,\bar z)\right]\leq\mu_t(\omega)|y_1-y_2|.
\end{align*}
If $y_1<y_2$, it is clear that the last inequality remains true, and then \ref{A:H3}(ii) holds for $g_n$. This completes the proof of property (i). Property (ii) is evident from the definition of $g_n$. It can be derived from the definition of $g_n$ and \eqref{eq:A.3} that for each $n\geq1$ and $(t,y,z_1,z_2) \in[0,\tau]\times\R\times\R^d\times\R^d$,
\begin{align*}
&\left|g_n(\omega,t,y,z_1)-g_n(\omega,t,y,z_2)\right|\\
& \ \ =\left|\inf_{\bar{z}\in\R^d}\{g(\omega,t,y,\bar{z})
+\bar{\nu}_t^n(\omega)|z_1-\bar{z}|\}-\inf_{\bar{z}\in\R^d}\{g(\omega,t,y,\bar{z})
+\bar{\nu}_t^n(\omega)|z_2-\bar{z}|\}\right|\\
& \ \ \leq \sup\limits_{\bar z\in\R^d}\{\bar\nu_t^n(\omega)\big||z_1-\bar z|-|z_2-\bar z|\big|\}\leq\bar\nu_t^n(\omega)|z_1-z_2|,
\end{align*}
from which property (iii) follows immediately. Finally, we show property (iv). If $(y_n,z_n)\rightarrow (y,z)$ as $n\rightarrow \infty$, then by \ref{A:H3}(i) we get that
\begin{align}\label{eq:A.4}\tag{A.4}
\lim\limits_{n\rightarrow\infty}g_n(\omega,t,y_n,z_n)\leq \lim\limits_{n\rightarrow\infty}g(\omega,t,y_n,z_n)=g(\omega,t,y,z).
\end{align}
On the other hand, by the definition of $g_n$ and \ref{A:H3}(iii), for each $n\geq1$, there exists a $z_n^*\in\R^d$ such that
\begin{align}\label{eq:A.5}\tag{A.5}
\begin{split}
&g_n(\omega,t,y_n,z_n)\geq g(\omega,t,y_n,z_n^*)+\bar\nu_t^n(\omega)|z_n-z_n^*|-\frac{1}{n}\\
& \ \ \geq -f_t(\omega)-\phi_t(\omega,|y_n|)-\nu_t(\omega)|z_n^*|+\bar\nu_t^n(\omega)|z_n-z_n^*|-\frac{1}{n}\\
& \ \ \geq -f_t(\omega)-\phi_t(\omega,|y_n|)-\nu_t(\omega)|z_n|+(\bar\nu_t^n-\nu_t)(\omega)|z_n-z_n^*|-\frac{1}{n},
\end{split}
\end{align}
which along with \eqref{eq:A.1} yields that
\begin{align}\label{eq:A.6}\tag{A.6}
(\bar\nu_t^n-\nu_t)(\omega)|z_n-z_n^*|\leq 2f_t(\omega)+2\nu_t(\omega)|z_n|+2\phi_t(\omega,|y_n|)+\frac{1}{n}.
\end{align}
Note that when $n\rightarrow \infty$, $(y_n,z_n)\rightarrow (y,z)$ and $\bar\nu_\cdot^n-\nu_\cdot\rightarrow +\infty$. By \eqref{eq:A.6} we can deduce that $z^*_n\rightarrow z$ as $n\rightarrow \infty$. Furthermore, sending $n$ to infinity on both sides of \eqref{eq:A.5} we obtain that
\begin{align*}
    \lim\limits_{n\rightarrow\infty}g_n(\omega,t,y_n,z_n)\geq \lim\limits_{n\rightarrow\infty}g(\omega,t,y_n,z_n^*)=g(\omega,t,y,z).
\end{align*}
Therefore, combining the last inequality with \eqref{eq:A.4} we can conclude that
$\lim\limits_{n\rightarrow\infty}g_n(\omega,t,y_n,z_n)=g(\omega,t,y,z),$
from which property (iv) follows. The proof of \cref{pro:5.1} is then completed.
\end{proof}

\begin{proof}[{\bf Proof of \cref{pro:5.2}.}]
Define $\gamma:=2 C_1$ and the following function from $\R_+$ into itself by
$$
f(x):=\frac{1}{\gamma^2}(e^{\gamma x}-\gamma x-1).
$$
Clearly, $x\mapsto f(|x|)$ is $C^2$. For each integer $n\geq1$, define the following $(\F_t)$-stopping time:
$$\tau_n:=\inf\left\{t\geq0:\int_0^t|z_s|^2{\rm d}s\geq n\right\}\wedge \tau$$
with the convention that $\inf \emptyset=+\infty$. Applying It\^{o}'s formula to $f(|y_t|)$ yields that for each  $n\geq1$,
\begin{align*}
 f(|y_0|) =& f(|y_{\tau_n}|) + \int_0^{\tau_n} \left(f'(|y_s|)\sgn(y_s)g(s, y_s, z_s)-\frac{1}{2} f''(|y_s|)|z_s|^2 \right) {\rm d}s\\
&- \int_0^{\tau_n} f'(|y_s|)\sgn(y_s)z_s \cdot{\rm d}B_s.
\end{align*}
In light of $|y_\cdot|\leq C_2$ and $f'(x), f''(x)\geq0$ for each $x\geq0$, by \eqref{eq:5.21} and the definition of $\gamma$ we can deduce that there exists a constant $C_3>0$ such that for each $n\geq1$,
\begin{align*}
0\leq f(|y_0|)\leq &C_3\left( 1+\int_0^{\tau_n} h_s{\rm d}s \right)-\int_0^{\tau_n} f'(|y_s|)\sgn(y_s)z_s\cdot {\rm d} B_s\\
&-\frac{1}{2}\int_0^{\tau_n}\left[ (f''(|y_s|)-\gamma f'(|y_s|))|z_s|^2\right]{\rm d}s.
\end{align*}
Finally, by the definition of the stopping time $\tau_n$ and the fact that $f''(x)-\gamma f'(x)=1$ for each $x\geq 0$, taking the mathematical expectation on both sides of the last inequality yields that for each $n\geq1$,
$$\E\left[\int_0^{\tau_n} |z_s|^2 {\rm d}s \right] \leq 2C_3\left( 1 + \int_0^{\tau} h_s{\rm d}s\right)\leq 2C_3(1+C_1).$$
The desired conclusion follows by sending $n\To\infty$ in the last inequality and applying Fatou's lemma.
\end{proof}

\setlength{\bibsep}{2pt}

\end{document}